\documentclass[11pt,twoside]{article}

\usepackage{amsmath, amsthm,amsfonts,amssymb,mathtools,mathdots,scalerel,mathrsfs,stmaryrd,cmll}
\usepackage[utf8]{inputenc}
\usepackage[T1]{fontenc}
\usepackage{mlmodern, tikz-cd, quiver}
\usepackage[many]{tcolorbox}

\usepackage{enumitem}
\usepackage[hidelinks]{hyperref}
\usepackage{tikz, fancyhdr, xparse, xcolor}
\usepackage[british]{babel}
\usepackage[a4paper, top=4.5cm, bottom=4.5cm, left=4cm, right=4cm, asymmetric]{geometry}

\usepackage{crossreftools}

\usepackage{ahtitle, titlesec}
\usepackage{etoolbox}
\makeatletter
\patchcmd{\ttlh@hang}{\parindent\z@}{\parindent\z@\leavevmode}{}{}
\patchcmd{\ttlh@hang}{\noindent}{}{}{}
\makeatother

% general macros
\newcommand\eqdef{\coloneqq}
\newcommand\nbd{\nobreakdash-\hspace{0pt}}
\newcommand\idd[1]{\mathrm{id}_{#1}}
\newcommand\bigid[1]{\mathrm{Id}_{#1}}
\newcommand\invrs[1]{#1^{-1}}

\newcommand\incl{\hookrightarrow}

\newcommand\surj{\twoheadrightarrow}
\newcommand\restr[2]{{#1}{\raisebox{0pt}{$|_{#2}$}}}

\newcommand\set[1]{\left\{ {#1} \right\}}

% category theory macros
\newcommand\slice[2]{{#1}/{\raisebox{-2pt}{$#2$}}}
\newcommand\join{\,{\star}\,}

\newcommand\cat[1]{\mathbf{#1}}

\newcommand\fun[1]{\mathsf{#1}}

% special categories
\newcommand\ogpos{\cat{ogPos}}

\newcommand\poscat{\cat{Pos}}

\newcommand\thetacat{\Theta}
\newcommand\simplexcat{\Delta}

\newcommand\rdcpx{\cat{RDCpx}}
\newcommand\rdcpxmap{\cat{RDCpx}_{\downarrow}}

\newcommand\sset{\cat{sSet}}

% oriented poset macros
\newcommand\hasse[1]{\mathscr{H}{#1}}

\newcommand\clset[1]{\mathrm{cl}\set{#1}}

\newcommand\bound[2]{\partial_{#1}^{#2}}
\newcommand\faces[2]{\Delta_{#1}^{#2}}
\newcommand\cofaces[2]{\nabla_{#1}^{#2}}

\newcommand\cp[1]{\,{\scriptstyle\#}_{#1}\,}

\newcommand\celto{\Rightarrow}

\newcommand\submol{\sqsubseteq}

\newcommand\gray{\otimes}
\newcommand\augm[1]{{{#1}_\bot}}
\newcommand\dimin[1]{{#1}_{\not\bot}}

% topological stuff

\newcommand\cubeconn[2]{\gamma^{#1}_{#2}}

% special objects
\newcommand\thearrow[1]{{#1}\vec{I}}

% other

\newcommand{\C}{\mathscr{C}}
\newcommand{\Ob}{\mathrm{Ob}}
\renewcommand{\a}{\alpha}
\renewcommand{\b}{\beta}
\newcommand{\bd}{\partial}
\newcommand{\atom}{{\scalebox{1.3}{\( \odot \)}}}
\newcommand{\smallatom}{\odot}
\newcommand{\Set}{\cat{Set}}
\newcommand{\sSet}{\cat{sSet}}
\newcommand{\Cat}{\cat{Cat}}
\newcommand{\pt}{\mathbf{1}}

\newcommand{\Sd}{\mathrm{Sd}}
\newcommand{\Ex}{\mathrm{Ex}}

\newcommand{\An}{\mathrm{An}}

\renewcommand{\rdcpx}{\cat{RDCpx}}
\DeclareMathOperator*{\colim}{colim}
\DeclareMathOperator*{\Nd}{Nd}

\newcommand{\W}{\mathcal{W}}

\newcommand{\ptcat}{\mathbf{1}}

\newtheoremstyle{ittheorem}
  {\topsep}   % ABOVESPACE
  {\topsep}   % BELOWSPACE
  {\itshape}  % BODYFONT
  {0pt}       % INDENT (empty value is the same as 0pt)
  {\sffamily \itshape \bfseries} % HEADFONT
  { ---}         % HEADPUNCT
  {5pt plus 1pt minus 1pt} % HEADSPACE
  {}          % CUSTOM-HEAD-SPEC

\newtheoremstyle{itdfn}
  {\topsep}   % ABOVESPACE
  {\topsep}   % BELOWSPACE
  {}  % BODYFONT
  {0pt}       % INDENT (empty value is the same as 0pt)
  {\sffamily \itshape \bfseries} % HEADFONT
  {}         % HEADPUNCT
  {5pt plus 1pt minus 1pt} % HEADSPACE
  {\thmnumber{#2}{\thmnote{\normalfont\ \ %
  {\sffamily(#3)}.}}}          % CUSTOM-HEAD-SPEC

\newtheoremstyle{itrmk}
  {0.5\topsep}   % ABOVESPACE
  {0.5\topsep}   % BELOWSPACE
  {\normalfont}  % BODYFONT
  {0pt}       % INDENT (empty value is the same as 0pt)
  {\sffamily \itshape} % HEADFONT
  { ---}         % HEADPUNCT
  {5pt plus 1pt minus 1pt} % HEADSPACE
  {}          % CUSTOM-HEAD-SPEC
  
\newtheoremstyle{itexm}
  {0.5\topsep}      % ABOVESPACE
  {0.5\topsep}      % BELOWSPACE
  {\normalfont}     % BODYFONT
  {0pt}             % INDENT (empty value is the same as 0pt)
  {\sffamily \itshape \bfseries \color{\mycolor}}        % HEADFONT
  {\\}               % HEADPUNCT
  {5pt plus 1pt minus 1pt} % HEADSPACE
  {\thmname{#1} \thmnumber{#2}{\thmnote{\normalfont\ \ %
  {\sffamily(#3)}.}}}           % CUSTOM-HEAD-SPEC
  
\makeatletter
  \renewcommand\@upn{\textit}
\makeatother

\theoremstyle{ittheorem}
\newtheorem{thm}{Theorem}[section]
\newtheorem*{thm*}{Theorem}
\newtheorem{prop}[thm]{Proposition}
\newtheorem*{prop*}{Proposition}
\newtheorem{cor}[thm]{Corollary}
\newtheorem{lem}[thm]{Lemma}

\theoremstyle{itdfn}
\newtheorem{dfn}[thm]{}
\theoremstyle{itrmk}
\newtheorem{rmk}[thm]{Remark}
\newtheorem{comm}[thm]{Comment}

\setlength\parindent{1em}

\relpenalty=10000
\binoppenalty=10000

\setlist{leftmargin=20pt,parsep=0pt,itemsep=0pt,topsep=1ex}

\fancyhf{}

\fancyhead[CO] {\textsc{\runtitle}}
\fancyhead[RO,LE] {\oldstylenums{\thepage}}
\fancyhead[CE] {\textsc{\runauthor}}
\setlength{\headheight}{14pt}

\pagestyle{fancy}

\titleformat{\section}
 {\large\scshape}{\thesection.}{1em}{}

\titleformat{\subsection}
 {\normalsize\itshape}{\thesubsection.}{1em}{}
\titlespacing*{\subsection}
{0pt}{1.5ex plus 1ex minus .2ex}{1.5ex plus .2ex}

\showboxbreadth=50 
\showboxdepth=50

\newcommand\runtitle{diagrammatic sets as a model of homotopy types}
\newcommand\runauthor{chanavat and hadzihasanovic}

\title{Diagrammatic sets as a model of homotopy types}

\author{Cl\'emence Chanavat and Amar Hadzihasanovic}

\institution{Tallinn University of Technology}

\begin{document}

\thispagestyle{empty}
\maketitle 

\noindent\makebox[\textwidth][r]{%
	\begin{minipage}[t]{.7\textwidth}
\small \emph{Abstract.}
	Diagrammatic sets are presheaves on a rich category of shapes, whose definition is motivated by combinatorial topology and higher-dimensional diagram rewriting.
	These shapes include representatives of oriented simplices, cubes, and positive opetopes, and are stable under operations including Gray products, joins, suspensions, and duals.
	We exhibit a cofibrantly generated model structure on diagrammatic sets, as well as two separate Quillen equivalences with the classical model structure on simplicial sets.
	We construct explicit sets of generating cofibrations and acyclic cofibrations, and prove that the model structure is monoidal with the Gray product of diagrammatic sets.
\end{minipage}}

\vspace{20pt}

\makeaftertitle

\normalsize

\section*{Introduction}

After Quillen's foundational article \cite{quillen_homotopical_1967}, the general method to present a homotopy category is to endow a category with a model structure.
The first and foremost examples are the Kan--Quillen model structure on simplicial sets and the Quillen model structure on topological spaces, which are equivalent via the pair of geometric realisation and its right adjoint nerve functor, and both present the category of homotopy types of CW\nbd complexes.

Simplicial sets are among several combinatorial models of spaces defined as presheaves over a \emph{shape category}, whose objects typically model ``combinatorial \( n \)\nbd balls'' for each \( n \geq 0 \), and morphisms model inclusions into their spherical boundaries, as well as the attaching maps used in the construction of cell complexes.
In \textit{Pursuing Stacks} \cite{grothendieck2021pursuing}, Grothendieck conjectured that presheaves on a certain class of small categories, called \emph{test categories}, would naturally model the homotopy types of CW-complexes.
After the developments of Maltsiniotis \cite{Maltsiniotis_homotopy} and, in particular, Cisinski \cite{cisinski_prefaisceaux_2006}, we know this to be true: 
every test category gives rise to a model structure together with a canonical Quillen equivalence with the classical model structure on simplicial sets.  
Many common categories of shapes have since been proven to be test categories, including the category of simplices \cite{cisinski_prefaisceaux_2006, Cisinski2011_theta}, varieties of cubical categories \cite{Maltsiniotis2009, buchholtz2017varieties,cisinski_prefaisceaux_2006}, Joyal's \( \thetacat \) \cite{Cisinski2011_theta, Ara_2011_theta}, the category of dendroids \cite{ARA2018_dendroid}, and the category of positive opetopes \cite{zawadowski2018positive}.

Among these, simplicial and cubical shapes have been overwhelmingly popular in homotopy theory, due to their simplicity and uniformity.
These, however, may run counter to the interest of combinatorial and computational topologists in having ``small'' cellular models of certain spaces, which typically require more complex polytopal shapes.
The situation is even more serious in the theory of higher categories, where shape categories for homotopy types have often been repurposed for the modelling of higher-categorical structures, typically by assigning each shape a particular \emph{direction} or orientation: examples include the complicial model of \( (\infty, n) \)\nbd categories \cite{verity2008complicial} based on simplices, the comical model \cite{campion2020cubical} based on cubes, and Rezk's model \cite{rezk2010cartesian} based on \( \thetacat \).
Here, ``directed cell complexes'' show up naturally as presentations of higher-dimensional algebraic theories or rewrite systems \cite{guiraud2019rewriting, ara2023polygraphs}, which in these models may not admit a representation with equivalent computational properties, preventing a satisfactory account of their functorial semantics.
Furthermore, important higher-categorical constructions such as Gray products, duals, suspensions, and joins \cite{ara2020joint}, which are most easily described in terms of cellular structure, do not admit simple explicit models when the shape category is not closed under each construction.

Building on Steiner's work \cite{steiner_algebra_1993}, as well as his own \cite{hadzihasanovic_combinatorial-topological_2019, hadzihasanovic_higher-dimensional_2023}, the second-named author has recently laid out in \cite{amar_pasting} a combinatorial framework for a theory of ``regular directed cell complexes'' as applicable to higher-categorical diagram rewriting.
In this framework, a \emph{regular directed complex} is encoded by the face poset of its underlying cell complex, complemented with orientation data partitioning the set of faces of each cell into an \emph{input} and an \emph{output} half.
The definition of regular directed complex rests on the definition of a particular inductive subclass, the \emph{molecules}, encoding well-formed shapes of \( n \)\nbd categorical pasting diagrams.
An \emph{atom}, which is a molecule with a greatest face, can be seen as a combinatorial model of a directed ball: indeed, it is provable that the underlying poset of each atom is the face poset of a regular CW\nbd ball, and the underlying poset of a regular directed complex is the face poset of a regular CW\nbd complex.

Atoms have the property of being closed under Gray products, duals, suspensions, and joins, and include exemplars of many of the aforementioned shapes, including oriented simplices, cubes, and positive opetopes.
They appear, thus, to be an advantageous class of shapes for higher categories, especially for the purposes of higher-dimensional algebra and rewriting.
In this article, we start our exploration of atoms and regular directed complexes in the context of the homotopy theory of higher categories, by first settling the case of higher groupoids also known as homotopy types.

We define a notion of morphism between regular directed complexes, that we call a \emph{cartesian map}, such that the category \( \atom \) (\emph{atom}) of atoms and cartesian maps is Eilenberg--Zilber.
Using the theory developed in \cite{cisinski_prefaisceaux_2006,cisinski_higher_2019}, as well as the closure of atoms under Gray products providing representable functorial cylinders, it becomes almost a formality to prove that \( \atom \) is a test category, and we obtain many tools for better characterising the induced model structure.

We call a presheaf on \( \atom \) a \emph{diagrammatic set}.
The name is borrowed from Kapranov and Voevodsky \cite{Kapranov1991}, who similarly considered presheaves on a category of ``combinatorial pasting diagrams'' --- in their case, it was Johnson's composable pasting schemes \cite{johnson1989combinatorics} --- as a model of homotopy types.
For reasons discussed in \cite{henry2019non}, this attempt was flawed, and it is clear that the authors did not have a strong grip on the combinatorics of either the shapes or their morphisms.
Nevertheless, the results of this article may be seen as a vindication of their general idea.
We note that this article follows, and subsumes in part an earlier attempt by the second-named author to resurrect diagrammatic sets \cite{hadzihasanovic_diagrammatic_2020}, which used a broader notion of morphism of atoms (resulting in a shape category that was not Eilenberg--Zilber), and only resulted in the proof of a weaker version of the homotopy hypothesis.

Our main theorem is as follows.
\begin{thm*}
    There is a cofibrantly generated model structure on \( \atom\Set \), whose 
    \begin{itemize}
	    \item cofibrations are the monomorphisms, and
	 \item acyclic cofibrations are generated by ``horn inclusions'' into atoms.
    \end{itemize}
    This model structure is Quillen-equivalent to the classical model structure on simplicial sets, both via a left Quillen ``subdivision'' functor, and a right Quillen ``restriction to simplices'' functor.
    
    \noindent Moreover, the model structure is monoidal with respect to the Gray product, which induces the cartesian product in the homotopy category.
\end{thm*}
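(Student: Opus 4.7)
The overall plan is to deduce the model structure from Cisinski's general theory of presheaves on test categories \cite{cisinski_prefaisceaux_2006}. The introduction already notes that \( \atom \) is Eilenberg--Zilber and that Gray products with a directed interval provide a representable functorial cylinder, making \( \atom \) a test category. This machinery yields at once a cofibrantly generated model structure on \( \atom\Set \) whose cofibrations are the monomorphisms, whose generating cofibrations are the boundary inclusions \( \partial U \incl U \) as \( U \) ranges over atoms, and which comes equipped with a canonical Quillen equivalence with the Kan--Quillen model structure on \( \sSet \).

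To upgrade the abstract set of generating anodyne extensions to an explicit horn presentation, I would define, for each atom \( U \) and each codimension-one face, a horn \( \Lambda \incl U \) obtained by removing the interior of that face together with the interior of the top-dimensional cell, mirroring the classical simplicial horn but using the input/output decomposition of the boundary intrinsic to regular directed complexes. Each horn inclusion is shown to be an acyclic cofibration by exhibiting an explicit deformation retract with respect to the Gray cylinder. The converse --- that every acyclic cofibration lies in the saturation of the horns --- then proceeds by a standard saturation argument: one verifies that the class generated by the horns satisfies the closure properties pinning down Cisinski's anodyne extensions for the chosen cylinder.

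The two Quillen equivalences with \( \sSet \) come from separate functors. ``Restriction to simplices'' \( \atom\Set \to \sSet \) is restriction along a canonical embedding \( \simplexcat \incl \atom \) picking out oriented simplices; it is right Quillen, with left adjoint given by left Kan extension. ``Subdivision'' is a separate left Quillen functor \( \sSet \to \atom\Set \), constructed as the left Kan extension of an assignment sending each standard simplex to its oriented subdivision expressed as a colimit of atoms. In each case the Quillen equivalence reduces to checking that the relevant functor is a test functor in the sense of \cite{cisinski_prefaisceaux_2006}, using the acyclicity properties established along the way.

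The monoidal structure requires the pushout-product axiom for \( \gray \). Closure under generating cofibrations amounts to showing that \( \partial U \gray V \cup U \gray \partial V \incl U \gray V \) is a monomorphism for atoms \( U, V \), which is combinatorial and follows from closure of atoms under Gray products. The harder half is that the pushout-product of a generating cofibration with a horn is anodyne, which reduces by the saturation argument to a combinatorial analysis of Gray products of atoms with horns; this is the main technical obstacle I anticipate, as it is where the structural theory of regular directed complexes meets the delicate combinatorics of deformation retractions. For the identification of \( \gray \) with the cartesian product on the homotopy category, I would exhibit a natural comparison map \( X \gray Y \to X \times Y \) and show it is a weak equivalence on representables, using that the directed asymmetry of \( \gray \) becomes invisible in any model for groupoidal spaces.
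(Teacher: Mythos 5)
Your overall architecture (Cisinski's theory for an Eilenberg--Zilber test category, the Gray cylinder \( \thearrow{}\gray - \), explicit horns as generating anodyne maps, pushout-product axiom) matches the paper, but two of your key steps would not go through as described. First, the acyclicity of the horn inclusions: you propose explicit deformation retracts with respect to the Gray cylinder, but the paper never constructs any such retraction --- for a general atom this is a serious combinatorial undertaking with no obvious inductive handle. Instead it proves (Lemma \ref{lem:horn_contractible}) that \( \Sd_\smallatom(\Lambda^V_U) \) is a PL ball using the topological results of \cite{amar_pasting}, hence contractible, and then uses that the \emph{already established} left Quillen equivalence \( \Sd_\smallatom \) reflects weak equivalences between cofibrant objects. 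Relatedly, your horns are indexed by single codimension-one faces, whereas the paper's \( \Lambda^V_U \) removes the interior of an arbitrary \emph{rewritable submolecule} \( V \submol \bd^\a U \). This larger indexing set is what makes \( \bd^\a U \incl U \) itself a horn (take \( V = \bd^{-\a}U \), which is a pasting of several faces) and makes the pushout-product identities \( \bd_U \hat\gray \lambda^W_V = \lambda^{U\gray W}_{U \gray V} \) and \( 0^\a \hat\gray \bd_U = \lambda^{\,\cdot}_{\thearrow{}\gray U} \) hold on the nose (Lemma \ref{lem:gray_prodcut_is_model_monoidal}); the step you flag as ``the main technical obstacle'' is then a one-line computation, while with single-face horns you would owe a separate argument that they generate the same saturated class.

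Second, your identification of the Gray product with the cartesian product in the homotopy category begs the question. Showing the comparison map \( X \gray Y \to X \times Y \) is ``a weak equivalence on representables'' requires knowing that \( U \times V \) is aspherical for atoms \( U, V \); this is precisely the strict-test-category property, and it genuinely fails for nearby shape categories (the paper's own Comment recalls that in cubes without connections the product of two intervals has the homotopy type of \( S^2 \vee S^1 \)), so ``directed asymmetry becomes invisible'' is not an argument. The paper instead observes that the underlying poset of \( U \gray V \) is the product poset, so \( \Sd_\smallatom \) sends Gray products to cartesian products of simplicial sets, which derive to products; strictness of \( \atom \) as a test category is then obtained separately by factoring through the embedded copy of \( \simplexcat \) (Propositions \ref{prop:other_adjunction_is_quillen} and \ref{prop:map_gray_to_cart_is_weak_eq}). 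Finally, you have the subdivision functor pointing the wrong way: the paper's \( \Sd_\smallatom\colon \atom\Set \to \sset \) is the left Kan extension of \( U \mapsto N(\fun{U}U) \), not a functor \( \sset \to \atom\Set \) sending a simplex to an oriented subdivision, and the two Quillen equivalences are obtained by identifying \( \Sd_\smallatom \dashv \Ex_\smallatom \) with Cisinski's Eilenberg--Zilber \( \Sd \dashv \Ex \) and deducing \( i_\Delta \dashv (-)_\Delta \) by two-out-of-three, rather than by checking test-functor conditions directly.
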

\noindent We refer the reader to Section \ref{sec:homotopy} for more precise statements.

\subsection*{Structure of the article}

The article is divided into 3 sections.
Section \ref{sec:atom} can be seen as a direct continuation of \cite[Section 6.2]{amar_pasting}, which studied maps of regular directed complexes.
Restricting to the category \( \rdcpx \) of maps whose underlying order-preserving map is, in particular, a Grothendieck fibration of posets, we obtain a shape category \( \atom \) that is Eilenberg--Zilber (Proposition \ref{prop:atom_ez_category}).
Then, we show that the Gray product and join operations on regular directed complexes determine monoidal structures on both \( \rdcpx \) and \( \atom \).

In Section \ref{sec:diagrammatic}, we define diagrammatic sets as presheaves over \( \atom \), whose category we denote by \( \atom\Set \).
Then, we fully and faithfully embed \( \rdcpx \) into \( \atom\Set \), factorising the Yoneda embedding as
\begin{equation*}
    \atom \incl \rdcpx \incl \atom\Set.
\end{equation*}
We also show that colimits of inclusions computed in \( \rdcpx \) can be identified with the same colimits computed in \( \atom\Set \), and characterise the regular directed complexes as the \emph{regular} presheaves, where the classifying morphism of each element is a monomorphism
(Proposition \ref{prop:rdcpx_are_regular}).

In Section \ref{sec:homotopy}, we deduce that the category \( \atom \) is a test category. 
Thus, by the theory developed in \cite{cisinski_prefaisceaux_2006}, we obtain immediately a model structure on \( \atom\Set \), which we call the \emph{Cisinski model structure} and spend the rest of the article studying.
We show that the left Kan extension of the functor taking the simplicial nerve of the underlying poset of an atom is a Quillen equivalence with the classical model structure on simplicial sets.
Then, we extend the Gray product to diagrammatic sets via Day convolution, and show that ``Gray product with the arrow'' is a good functorial cylinder (Proposition \ref{prop:exact_cylinder_object}).
We define a set of horn inclusions, generalising the simplicial horn inclusions, and we show that they generate a class of anodyne extensions with respect to our cylinder (Proposition \ref{prop:lr_J_is_anodyne}).
Thus, using again \cite{cisinski_prefaisceaux_2006}, we are able to show that the horn inclusions are a generating set of acyclic cofibrations (Theorem \ref{thm:main_theorem}), and that the model structure is monoidal with respect to the Gray product (Theorem \ref{thm:model_structure_monoidal_Gray}).
We conclude by proving that \( \atom \) is in fact a strict test category
(Proposition \ref{prop:map_gray_to_cart_is_weak_eq}), and, in particular, the cartesian product and the Gray product of diagrammatic sets coincide in the homotopy category.

\subsection*{Notation and prerequisites}

We recall some basic facts about test categories, and refer to \cite{cisinski_prefaisceaux_2006, cisinski_higher_2019} for details.
We know from Thomason \cite{thomason_model} that \( \Cat \), the category of small categories, is a model for the homotopy theory of CW-complexes. 
The class of weak equivalences of this model structure is called \( \mathcal{W}_\infty \), and is an instance of what Grothendieck calls in \emph{Pursuing Stacks} a \emph{basic localiser}: a class of functors in \( \Cat \) one wishes to invert in order to model various homotopical phenomena. 
The class \( \mathcal{W}_\infty \) is, in fact, the smallest such basic localiser. 

The relationship between basic localisers and test categories is as follows.
Given a small category \( \C \), one gets a canonical functor \( i_\C \colon \C \to \Cat \) sending an object \( c \) to the slice \( \slice{\C}{c} \). 
By left Kan extension along the Yoneda embedding into the presheaf category \( \widehat \C \), this produces an adjunction
\[\begin{tikzcd}
	{\widehat\C} && \Cat
	\arrow[""{name=0, anchor=center, inner sep=0}, "{\int_\C}", curve={height=-12pt}, from=1-1, to=1-3]
	\arrow[""{name=1, anchor=center, inner sep=0}, "{N_\C}", curve={height=-12pt}, from=1-3, to=1-1]
	\arrow["\dashv"{anchor=center, rotate=-90}, draw=none, from=0, to=1]
\end{tikzcd}\]
where \( \int_\C \) sends a presheaf to its category of elements, and \( N_\C \) is its right adjoint nerve functor.
Given a basic localiser \( \mathcal{W} \) on \( \Cat \), we get a class of maps \( \mathcal{W}_{\C} \eqdef \invrs{\int_\C}(\mathcal{W}) \) in \( \widehat\C \), and we say that \( \C \) is a \emph{\( \mathcal{W} \)\nbd weak test category} if the adjunction induces an equivalence between the localised categories
\[\begin{tikzcd}
	{\widehat\C[\invrs{\mathcal{W}_\C}]} && \Cat[\invrs{\mathcal{W}}].
	\arrow[""{name=0, anchor=center, inner sep=0}, "{\sim}", curve={height=-12pt}, from=1-1, to=1-3]
	\arrow[""{name=1, anchor=center, inner sep=0}, "{\sim}", curve={height=-12pt}, from=1-3, to=1-1]
	\arrow["\dashv"{anchor=center, rotate=-90}, draw=none, from=0, to=1]
\end{tikzcd}\]
We say that \( \C \) is a \emph{\( \mathcal{W} \)\nbd test category} if each slice \( \slice{\C}{c} \) is a \( \mathcal{W} \)\nbd weak test category, and \( \C \) is \( \mathcal{W} \)\nbd contractible in the sense that \( \C \to \ptcat \) belongs to \( \mathcal{W} \).
In this article, we are only concerned with the basic localiser \( \mathcal{W}_\infty \), and, as customary in this case, we drop the prefix \( \mathcal{\W}_\infty \) and simply say ``test category''.

With regard to regular directed complexes, we reuse the concepts and notations introduced in \cite{amar_pasting}. 
Ideally, the reader would be familiar with the first 3 chapters; nevertheless, we give a brief overview.
Given a poset \( P \), the \emph{covering relation} \( \succ \) is defined by \( y \succ x \) if and only if \( y > x \), and any element \( z \) between \( x \) and \( y \) is either equal to \( x \) or \( y \).
In this case, we say that \( y \) \emph{covers} \( x \).
We write \( \prec \) for the converse of \( \succ \).
The relation \( \succ \) defines a directed graph \( \hasse{P} \), the \emph{covering diagram}, also known as Hasse diagram.
When the poset has \emph{locally finite height}, in the sense that, for all \( x \in P \), any chain in the lower set of \( x \) is finite, then \( P \) can be reconstructed from \( \hasse{P} \).
We say that a poset \( P \) of locally finite height is \emph{graded} if, for all \( x \in P \), all maximal paths starting from \( x \) in \( \hasse{P} \) have the same length. 
This defines a function \( \dim \colon P \to \mathbb N \), the \emph{dimension}, assigning to each element the length of a maximal path starting from it. 
The \emph{dimension} of a graded poset is \( -1 \) if the poset is empty, the maximum of the dimensions of its elements if it exists, and \( \infty \) otherwise.

An \emph{orientation} on a graded poset is the assignment of a value \( \a \in \{ -, + \} \) to each edge of its covering diagram. 
If \( x \prec y \) with orientation \( - \), then \( x \) is in an \emph{input face} of \( y \), while if the orientation is \( + \), \( x \) is an \emph{output face} of \( y \). 
We denote by \( \Delta^- y \) and \( \Delta^+ y\), respectively, the sets of input and output faces of \( y \). 
An \emph{oriented graded poset} is a graded poset with an orientation.
A morphism \( f\colon P \to Q \) of oriented graded posets is a function which induces bijections between \( \faces{}{\a}x \) and \( \faces{}{\a}f(x) \) for all \( x \in P \) and \( \a \in \set{ +, - } \).
With their morphisms, oriented graded posets form a category \( \ogpos \).

Oriented graded posets can be used to encode the shape of \( n \)-categorical pasting diagrams. 
To convince the reader, we give an example of a pasting diagram on the left and its oriented face poset on the right:
\[    
\begin{tikzcd}
        &&&&& \gamma \\
        x & y & z && f & g & h \\
        &&&& x & y & z
	\arrow["{-}"', no head, from=1-6, to=2-5]
        \arrow["{+}", no head, from=1-6, to=2-6]
        \arrow[""{name=0, anchor=center, inner sep=0}, "f"', curve={height=12pt}, from=2-1, to=2-2]
        \arrow[""{name=1, anchor=center, inner sep=0}, "g", curve={height=-12pt}, from=2-1, to=2-2]
        \arrow["h"', from=2-2, to=2-3]
        \arrow["{-}"', no head, from=2-5, to=3-5]
        \arrow["{+}", no head, from=2-6, to=3-6]
        \arrow["{-}", no head, from=2-7, to=3-6]
        \arrow["{+}", no head, from=2-7, to=3-7]
        \arrow["{-}"{pos=0.8}, no head, from=3-5, to=2-6]
        \arrow["{+}"{pos=0.2}, no head, from=3-6, to=2-5]
        \arrow["\gamma", shorten <=3pt, shorten >=3pt, Rightarrow, from=0, to=1]
    \end{tikzcd}
\]
This is a \( 2 \)\nbd dimensional oriented graded poset, with maximal elements \( \gamma \) of dimension 2 and \( h \) of dimension 1.

We say that \( U \subseteq P \) is \emph{closed} if it contains the lower set of each of its elements; these are the usual closed sets of the Alexandrov topology on a poset.
Given a closed subset \( U \), we can define its input (output) \( n \)\nbd boundary \( \bd^-_n U \) (\( \bd^+_n U \)) as the closure of the \( n \)\nbd dimensional elements of \( U \) that are output (input) faces of no \( (n + 1) \)\nbd elements of \( P \), as well as of the maximal \( k \)\nbd dimensional elements of \( U \) for \( k < n \).
We omit the index \( n \) when it is equal to \( \dim{U} - 1 \).
Given \( x \in P \), we write \( \bd^\a_n x \) for \( \bd^\a_n \clset{x} \).
For instance, if \( U = P \) as in the previous example, then \( \bd^-_1 U \) and \( \bd^+_1 U \) are, respectively,
\[\begin{tikzcd}
	x & y & z && x & y & z.
	\arrow["f"', curve={height=12pt}, from=1-1, to=1-2]
	\arrow["h"', from=1-2, to=1-3]
	\arrow["g", curve={height=-12pt}, from=1-5, to=1-6]
	\arrow["h"', from=1-6, to=1-7]
\end{tikzcd}\]
An oriented graded poset \( P \) is \emph{globular} if, for all \( n > k \geq 0 \) and \( \a, \b \in \set{ +, - } \), we have \( \bound{k}{\a} \bound{n}{\b} P = \bound{k}{\a} P \).
It is \emph{round} if, furthermore, for all \( k < \dim P \), \( \bd^-_k P \cap \bd^+_k P = \bd_{k - 1} P \), where \( \bd_n P \eqdef \bd^-_n P \cup \bd^+_n P \). 

The \emph{molecules} are an inductive subclass of the oriented graded posets, closed under isomorphism, and defined by three constructors.
First of all, the \emph{point} \( \pt \), which is the poset with one element and trivial orientation, is a molecule. 
If \( U, V \) are molecules, and if \( \bd^+_k U \) and \( \bd^-_k V \) are isomorphic, then the \emph{pasting of \( U \) and \( V \) at the \( k \)\nbd boundary} \( U \cp k V \), obtained as the pushout
\[    
	\begin{tikzcd}[sep=small]
        {\bd^+_k U \cong \bd^-_k V} & V \\
        U & {U \cp k V}
        \arrow[from=1-1, to=1-2]
        \arrow[from=1-1, to=2-1]
        \arrow[from=1-2, to=2-2]
        \arrow[from=2-1, to=2-2]
	\arrow["\lrcorner"{anchor=center, pos=0.125, rotate=180}, draw=none, from=2-2, to=1-1]
    \end{tikzcd}
\]
in \( \ogpos \), is a molecule.
Finally, if \( U, V \) are round molecules of the same dimension, such that \( \bd U \) and \( \bd V \) are isomorphic, and such that this isomorphism restricts to isomorphisms \( \bd^\a U \cong \bd^\a V \), then we may construct the pushout
\[
	\begin{tikzcd}[sep=small]
        {\bd U \cong \bd V} & V \\
        U & {\bd(U \celto V)}
        \arrow[from=1-1, to=1-2]
        \arrow[from=1-1, to=2-1]
        \arrow[from=1-2, to=2-2]
        \arrow[from=2-1, to=2-2]
	\arrow["\lrcorner"{anchor=center, pos=0.125, rotate=180}, draw=none, from=2-2, to=1-1]
    \end{tikzcd}
\]
in \( \ogpos \).
Then the oriented graded poset \( U \celto V \), obtained by adjoining a greatest element \( \top \) to \( \bd (U \celto V) \) with \( \bd^- \top = U \) and \( \bd^+ \top = V \), is a molecule.
An \emph{atom} is a molecule with a greatest element; it can be shown that an atom is either \( \pt \), or is of the form \( U \celto V \).
A \emph{regular directed complex} is an oriented graded poset such that the lower set of each of its elements is an atom. 

The following summarises some useful properties of molecules.
\begin{prop*}    
Let \( U \) be a molecule. 
Then
    \begin{enumerate}
	 \item \( U \) is globular,
	 \item \( U \) is rigid, that is, it has no non-trivial automorphisms,
	\item if \( U \) is an atom, then it is round,
        \item for all \( \a \in \{ -, + \}, n \geq 0 \), \( \bd^\a_n U \) is a molecule,
        \item \( U \) is a regular directed complex.
    \end{enumerate}
\end{prop*}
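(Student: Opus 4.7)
The natural strategy is structural induction on the inductive definition of molecules, with the base case being the point \( \pt \), where all five properties hold trivially. The inductive steps split into two: pasting \( U \cp k V \) at a \( k \)-boundary, and rewriting \( U \celto V \) for round molecules of equal dimension with matching boundaries. I would prove (1) and (4) simultaneously, then derive (3), (2), and (5) in that order.

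\textbf{Globularity (1) and boundaries (4).} In the pasting case, I would establish the decomposition \( \bound{j}{\b}(U \cp k V) = \bound{j}{\b} U \cp k \bound{j}{\b} V \) for \( j > k \), and \( \bound{j}{\b}(U \cp k V) = \bound{j}{\b} U = \bound{j}{\b} V \) for \( j \leq k \), using the inductive hypothesis that \( U, V \) are globular to ensure the interfacing \( k \)-boundaries still match after taking further \( j \)-boundaries. This simultaneously exhibits \( \bound{j}{\b}(U \cp k V) \) as either a pasting of molecules (by inductive hypothesis on (4)) or as a boundary already known to be a molecule. For \( U \celto V \), the top element \( \top \) satisfies \( \bound{}{-} \top = U \), \( \bound{}{+} \top = V \), while \( \bd(U \celto V) \) is built from the pushout identifying \( \bd U \) with \( \bd V \); globularity reduces to checking that \( \bound{k}{\a} \) of this pushout matches \( \bound{k}{\a} U = \bound{k}{\a} V \) for \( k < \dim U \), which follows from the hypothesis that the isomorphism \( \bd U \cong \bd V \) restricts to each \( \bound{}{\a} \).

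\textbf{Roundness (3) and rigidity (2).} Since an atom is either \( \pt \) or of the form \( U \celto V \), property (3) is a direct consequence of the pushout construction defining \( \bd(U \celto V) \): the intersection \( \bound{k}{-} \cap \bound{k}{+} \) reduces, by globularity, to the lower-dimensional boundary as required. For rigidity, any automorphism \( \phi \) of \( U \) must preserve dimension and orientation and so restrict to automorphisms of \( \bound{}{-} U \) and \( \bound{}{+} U \); when \( U \) is an atom, \( \phi \) fixes the top element, and the inductive hypothesis on the boundary halves forces \( \phi = \idd{U} \). For a general molecule, one decomposes along its pasting structure and applies the atom case to each maximal cell.

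\textbf{Regular directed complex (5) and main obstacle.} Property (5) follows from the previous items: for each \( x \in U \), the lower set \( \clset{x} \) can be identified with an atom appearing as a sub-molecule in the inductive construction, essentially because each element is introduced either as the point, or as a cell in one of the two constituents of a pasting, or as the new top of a rewrite. The main obstacle is the joint induction for (1) and (4): verifying that the boundary decomposition under pasting is well-defined requires globularity of the inputs, while globularity of the pasting is in turn proved through this very decomposition, so one must be careful to order the inductive case analysis so that each invocation of the hypothesis is on a strictly smaller molecule --- typically measured by the number of pasting/rewriting constructors used to build it.
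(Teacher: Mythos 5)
The paper does not prove this proposition: it is stated without proof in the preliminaries as a summary of results imported from the reference \cite{amar_pasting} (where these facts are established over several chapters). So there is no in-paper proof to compare against; I can only assess your sketch against the standard development there. Your overall strategy --- structural induction on the three constructors, proving globularity and ``boundaries are molecules'' jointly via the boundary-of-a-pasting formulas, then deriving roundness of atoms and property (5) --- is indeed the right shape of argument and matches how the reference proceeds. Two small imprecisions: at \( j = k \) the pasting formula gives \( \bd^-_k(U \cp{k} V) = \bd^-_k U \) and \( \bd^+_k(U \cp{k} V) = \bd^+_k V \) separately, not a common value for both signs as your ``\( j \le k \)'' clause suggests; and roundness of \( U \celto V \) at levels \( k < \dim U \) needs the roundness of \( U \) and \( V \) that the constructor presupposes, not just the pushout along \( \bd U \cong \bd V \).

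The genuine gap is rigidity. Your argument for an atom \( U \celto V \) (the automorphism fixes \( \top \), restricts to the halves, apply the inductive hypothesis) is fine, but the reduction of the general case to it does not work as stated. Knowing that each \( \clset{x} \) is rigid for \( x \) maximal only tells you that an automorphism \( \phi \) fixing every maximal element is the identity; it does nothing to rule out \( \phi \) \emph{permuting} the maximal cells, and a pasting decomposition of a molecule is neither unique nor a priori preserved by \( \phi \), so ``decompose along the pasting structure and apply the atom case to each maximal cell'' begs the question. This is exactly where the real work lies: one has to show that an automorphism cannot move the top-dimensional elements, which in the cited development is done by a separate induction on dimension (an automorphism restricts to one of \( \bd^- U \), identity by induction, and an automorphism fixing \( \bd^-_{n-1}U \) pointwise is forced to fix the \( n \)\nbd dimensional elements via the order structure on them induced by the flow of the molecule). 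Without an argument of this kind, item (2) is not established by your proposal.
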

\noindent A consequence of the rigidity of molecules is that \( U \cp{k} V \) and \( U \celto V \) are independent of the choice of isomorphism between the boundaries of \( U \) and \( V \), as implied by the notation.
A further useful property of regular directed complexes is \emph{oriented thinness}:
this says that, for all \( x < y \) with \( \dim x + 2 = \dim y \), the interval \( [x, y] \) is of the form
\[        
	\begin{tikzcd}[sep=small]
            & y \\
            {t_1} && {t_2} \\
            & x
            \arrow["\a"', no head, from=1-2, to=2-1]
            \arrow["\gamma", no head, from=1-2, to=2-3]
            \arrow["\beta"', no head, from=2-1, to=3-2]
            \arrow["\delta"', no head, from=3-2, to=2-3]
\end{tikzcd}
\]
with the orientations satisfying \( \a \beta = - \gamma \delta \).

The \emph{submolecule inclusions} are the smallest class of morphisms of molecules closed under composition, and containing the isomorphisms and the canonical inclusions \(U \incl U \cp k V \) and \( V \incl U \cp k V \).
If \( V \subseteq U \), we write \( V \submol U \), and say that \( V \) is a submolecule of \( U \), to signify that the subset inclusion \( V \incl U \) is a submolecule inclusion. 
If, furthermore, \( \dim V = \dim U \) and \( V \) is round, we say that \( V \) is a \emph{rewritable submolecule}. 
The name comes from the fact that, in this case, we can substitute (\emph{rewrite}) the image of \( V \) in \( U \) with any other round molecule \( W \) whose boundaries are isomorphic to those of \( V \), and obtain another molecule; see \cite{hadzihasanovic_higher-dimensional_2023}.

It is proven in \cite[Section 5.3]{amar_pasting} that isomorphism classes of molecules form a strict \( \omega \)\nbd category with \( - \cp{k} - \) as \( k \)-composition.
In particular, we record the following expressions for the boundaries of a pasting.
\begin{prop*}
    Let \( U, V \) be molecules such that \( U \cp i V \) is defined. Then
    \begin{equation*}
        \begin{cases}
            \bd^\a_k ( U \cp i V) = \bd^\a_k U = \bd^\a_k V \text{ for } k < i, \\
            \bd^-_i ( U \cp i V) = \bd^-_i U, \\
            \bd^+_i ( U \cp i V) = \bd^+_i V, \\
            \bd^\a_k ( U \cp i V) = \bd^\a _k U \cp i \bd^\a_k V \text{ for } k > i.
        \end{cases}
    \end{equation*} 
\end{prop*}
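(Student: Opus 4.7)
The plan is to argue directly from the pushout definition of $U \cp{i} V$ in $\ogpos$, combined with the combinatorial description of $\partial^\alpha_k W$ as the closure of the $k$-dimensional elements of $W$ that are output (for $\alpha = -$) or input (for $\alpha = +$) faces of no $(k+1)$-element, together with the maximal $j$-dimensional elements of $W$ for $j < k$. The essential tools from the quoted proposition are globularity, which relates iterated boundaries across dimensions, and rigidity, which makes the boundary identifications canonical. The argument splits into three cases according to the comparison of $k$ and $i$.

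For $k < i$, the hypothesis $\partial^+_i U \cong \partial^-_i V$ together with globularity gives
\[ \partial^\alpha_k U = \partial^\alpha_k \partial^+_i U \cong \partial^\alpha_k \partial^-_i V = \partial^\alpha_k V, \]
so the two boundaries already agree as subsets of the identified $i$-boundary; since every element of $U \cp{i} V$ of dimension $\leq k$ lies in $U$ or in $V$, the pushout computes $\partial^\alpha_k (U \cp{i} V)$ as this common value. For $k = i$, I would classify the $i$-dimensional elements of $U \cp{i} V$ by whether they are output of some $(i+1)$-element: any $i$-element of $V$ outside $\partial^-_i V$ is output of an $(i+1)$-element of $V$ and hence excluded from $\partial^-_i(U \cp{i} V)$, while an $i$-element in the identified $i$-boundary or in $U \setminus \partial^+_i U$ contributes precisely when it is output of no $(i+1)$-element of $U$, i.e., when it lies in $\partial^-_i U$ at dimension $i$. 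Taking the closure reconstructs $\partial^-_i U$; the output case is dual.

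For $k > i$, globularity first yields
\[ \partial^+_i \partial^\alpha_k U = \partial^+_i U \cong \partial^-_i V = \partial^-_i \partial^\alpha_k V, \]
so the pasting $\partial^\alpha_k U \cp{i} \partial^\alpha_k V$ is well-defined. The generators of $\partial^\alpha_k (U \cp{i} V)$ decompose as the disjoint union of those of $\partial^\alpha_k U$ and $\partial^\alpha_k V$ (no identification takes place above dimension $i$), glued along their common $i$-boundary in lower dimensions. The main obstacle, and the most delicate step, will be to verify that closing this generating set inside the pushout $U \cp{i} V$ produces exactly the pushout of the closures $\partial^\alpha_k U$ and $\partial^\alpha_k V$ along $\partial^+_i U \cong \partial^-_i V$. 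This reduces to checking that each boundary inclusion $\partial^\alpha_k U \incl U$ (respectively $\partial^\alpha_k V \incl V$) contains the glued copy of the $i$-boundary, which follows once more from globularity.
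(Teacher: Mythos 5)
First, a point of comparison: the paper does not prove this proposition at all. It is recalled in the preliminaries as part of the results of \cite[Section 5.3]{amar_pasting} on the strict \( \omega \)\nbd category of molecules, so the ``paper's proof'' is a citation. Your strategy --- a direct element-level computation of the generating sets \( \Delta^\a_k \) inside the pushout, driven by globularity --- is the right kind of argument and is in the spirit of how the identities are established in the cited source. As written, however, it has genuine gaps rather than being a compressible-but-complete proof.

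The largest gap is that you only ever discuss the \( k \)\nbd dimensional generators of \( \bd^\a_k \) and ignore the other half of its definition: \( \bd^\a_k W \) is the closure of those generators \emph{together with the maximal elements of \( W \) of dimension \( < k \)}. Maximality is not obviously preserved by pasting --- an element of the gluing region \( \bd^+_i U = \bd^-_i V \) that is maximal in \( U \) could a priori acquire cofaces from \( V \) --- so one must actually show that the relevant sets of maximal elements of \( U \cp i V \), \( U \) and \( V \) agree (this is true, but the argument uses that low-dimensional maximal elements land in the common boundary and that maximality in \( U \) forbids them from lying strictly below anything there). Second, for an element \( x \) of the gluing region, membership in \( \Delta^-_k(U \cp i V) \) is the conjunction ``\( x \) is a \( + \)\nbd face of no \( (k+1) \)\nbd element of \( U \) \emph{and} of no \( (k+1) \)\nbd element of \( V \)''; your \( k = i \) case asserts this reduces to the condition in \( U \) alone, but that reduction needs the observation that an \( i \)\nbd dimensional element of \( \bd^+_i U \) lies in \( \Delta^+_i U \), hence, if it is additionally a \( + \)\nbd face of no \( (i+1) \)\nbd element of \( U \), it is maximal in \( U \) by gradedness, and only then does \( x \in \bd^-_i V \) supply the missing condition on the \( V \) side. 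Finally, for \( k > i \) you correctly flag the closure-versus-pushout comparison as the delicate step, but the one-line appeal to globularity does not discharge it: globularity locates the \( i \)\nbd boundary inside \( \bd^\a_k U \), it does not show that the closure in \( U \cp i V \) of the union of generators is the claimed pasting. The outline is salvageable, but each of these points needs an actual argument.
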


\subsection*{Acknowledgements}

We thank Philip Hackney for helpful comments on an earlier draft.
The first-named author is grateful to Marco Giustetto for useful discussions about the theory of presheaves over an Eilenberg--Zilber category.
The second-named author was supported by Estonian Research Council grant PSG764.

\section{The category of atoms and cartesian maps} \label{sec:atom}

\subsection{Definition and Eilenberg--Zilber property}

Recall from \cite[\S 6.2.1]{amar_pasting} that a map \( f \colon P \to Q \) of regular directed complexes is a closed order-preserving map of their underlying posets, which, for all \( x \in P \), \( n \in \mathbb N \), and \( \a \in \set{ -, + } \), satisfies
\begin{equation*}
    f(\bd^\a_n x) = \bd^\a_n f(x),
\end{equation*}
and, furthermore, the surjection \( \restr f {\bd^\a_n x} \colon \bd^\a_n x \to \bd^\a_n f(x) \) is \emph{final}, which in this context means that, for all \( y, y' \in \bound{n}{\a}x \), if \( f(y) = f(y') \), then there exists a zig-zag \( y \leq y_1 \geq \ldots \leq y_m \geq y' \) in \( \bound{n}{\a}x \) such that \( f(y) \leq f(y_i) \) for all \( y \in \set{1, \ldots, m} \).
An \emph{inclusion} is an injective map.
By \cite[Lemma 6.2.3]{amar_pasting}, maps are dimension-non-increasing.

\begin{dfn}[Cartesian map]
    Let \( f \colon P \to Q \) be an order-preserving map of posets.
    We say that \( f \) is \emph{cartesian} if it is a Grothendieck fibration between \( P \) and \( Q \) seen as posetal categories.
    Explicitly, \( f \) is cartesian if, for all \( x \in P \), there exists a \emph{cartesian lift} of each \( y \le f(x) \), that is, some \( y' \le x \) such that
    \begin{itemize}
        \item \( f(y') = y \), and
        \item for all \( z \le x \), if \( f(z) \le y \), then \( z \le y' \).
    \end{itemize}
    We say that a map of regular directed complexes is cartesian if its underlying map of posets is cartesian.
\end{dfn}

\noindent We let \( \rdcpx \) be the category of regular directed complexes and cartesian maps, and we let \( \atom \) (\emph{atom}) be a skeleton of its full subcategory on the atoms.

\begin{rmk} \label{rmk:cartesian_maps}
	The only difference between \( \rdcpx \) and \( \rdcpxmap \), as defined in \cite[Section 6.2]{amar_pasting} is that \( \rdcpx \) has strictly fewer surjections: all inclusions (and more in general, all \emph{local embeddings}, which are discrete Grothendieck fibrations) are already cartesian, but not all surjective maps are.
	For instance, consider the cubical coconnection map \( c \eqdef \cubeconn{}{-} \colon I \gray I \to I \) from \cite[\S 9.3.13]{amar_pasting}:
        \[
	\begin{tikzcd}
            {0^-} & {0^+} && {0^+} \\
            {0^-} & {0^-} && {0^-}
            \arrow["1", from=1-1, to=1-2]
            \arrow["{0^-}", from=2-1, to=1-1]
            \arrow["{0^-}"', from=2-1, to=2-2]
            \arrow["1"{description}, Rightarrow, from=2-2, to=1-1]
            \arrow[""{name=0, anchor=center, inner sep=0}, "1"', from=2-2, to=1-2]
            \arrow[""{name=1, anchor=center, inner sep=0}, "1"', from=2-4, to=1-4]
            \arrow["c", shorten <=13pt, shorten >=13pt, maps to, from=0, to=1]
        \end{tikzcd}
\]
    This is indeed a map of atoms, but \( c^{-1}(0^-) \) has no greatest element, so the map is not cartesian according to Lemma \ref{lem:fiber_has_top_element}.
\end{rmk}

\begin{rmk}\label{rmk:no_nontrivial_auto}
    By \cite[Proposition 6.3.13]{amar_pasting}, isomorphisms of regular directed complexes in \( \rdcpx \) coincide with their isomorphisms when seen as oriented graded posets.
    In particular, molecules still have no non-trivial automorphisms in \( \rdcpx \).
\end{rmk}

\begin{prop} \label{prop:point_is_terminal}
	The point \( \pt \) is a terminal object in \( \rdcpx \) and in \( \atom \).
\end{prop}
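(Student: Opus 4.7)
The plan is to exhibit, for every regular directed complex $P$, a unique cartesian map $!_P \colon P \to \pt$. Since $\pt$ has a single element $*$, there is only one set-function $P \to \pt$, namely the constant map, so uniqueness is automatic as soon as one shows that this constant map is a cartesian map of regular directed complexes. Terminality in $\atom$ then follows immediately from terminality in $\rdcpx$, since $\atom$ is a full subcategory of $\rdcpx$ containing $\pt$.

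The verification that $!_P$ is a cartesian map splits into four short steps. First, $!_P$ is order-preserving and closed, since $\pt$ has only the two trivial closed subsets. Second, for the boundary condition, I would first compute $\bd^\a_n * = \set{*}$ for every $n \geq 0$ and every $\a \in \set{-, +}$: for $n = 0$ the defining clause selects the unique $0$-dimensional element $*$ (which is the input/output face of no $1$-element since there are none), and for $n \geq 1$ the ``maximal $k$-dimensional elements for $k < n$'' clause picks out $\set{*}$. Since every $\bd^\a_n x \subseteq P$ is nonempty (as follows from globularity by downward induction on $n$), this gives $!_P(\bd^\a_n x) = \set{*} = \bd^\a_n !_P(x)$.

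Third, the finality condition degenerates: the inequality $!_P(y) \leq !_P(y_i)$ collapses to $* \leq *$, so it suffices to produce any zig-zag in $\bd^\a_n x$ between $y$ and $y'$, which exists because $\bd^\a_n x$ is a molecule by the proposition recalled in the preliminaries, and molecules are zig-zag connected. Fourth, for the cartesian lifting property at $x \in P$, the only element $y \leq !_P(x) = *$ is $y = *$, and the assignment $y' \eqdef x$ provides the required lift: $!_P(x) = *$, and every $z \leq x$ trivially satisfies $z \leq x = y'$.

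No substantial obstacle stands in the way: everything is a matter of unpacking the definitions in the presence of a one-element codomain. The only point warranting an explicit calculation is the identification $\bd^\a_n * = \set{*}$ at every dimension, which turns boundary preservation into the trivial observation that a constant map sends nonempty sets to a singleton.
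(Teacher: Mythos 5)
Your proof is correct, and for the genuinely new content of the statement --- that the unique map to \( \pt \) is cartesian --- your argument (the cartesian lift of \( * \le * \) at \( x \) is \( x \) itself) is exactly the paper's, which dismisses this as ``evidently cartesian''. Where you diverge is that the paper disposes of the rest in one line by citing \cite[Proposition 6.2.9]{amar_pasting} for the fact that the unique function \( P \to \pt \) is a map of regular directed complexes, whereas you re-derive that fact from the definitions: closedness and order-preservation are trivial, the computation \( \bd^\a_n * = \set{*} \) reduces boundary preservation to nonemptiness of \( \bd^\a_n x \), and finality reduces to zig-zag connectedness of \( \bd^\a_n x \). This unpacking is sound, with two small caveats: nonemptiness of \( \bd^\a_n x \) is more cleanly justified by the recalled fact that \( \bd^\a_n \) of a molecule is a molecule (and molecules, being built from the point by pushouts along nonempty subobjects, are nonempty) than by your parenthetical appeal to globularity; and the connectedness of molecules, while true and proved in the cited monograph, is not among the properties recalled in this paper's preliminaries, so your finality step silently imports one more external fact. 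The trade-off is the usual one: your version is self-contained modulo that connectedness claim, while the paper's is shorter because the heavy lifting was already done elsewhere.
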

\begin{proof}
	By \cite[Proposition 6.2.9]{amar_pasting}, the unique function to \( \pt \) is a map of regular directed complexes, and it is evidently cartesian.
\end{proof}

\begin{prop}\label{prop:epi_mono_factorisation}
    Let \( f \colon P \to Q \) be a cartesian map of regular directed complexes. 
    Then \( f \) factors as 
    \begin{enumerate}
        \item a surjective cartesian map \( \hat f \colon P \surj f(P) \),
        \item followed by an inclusion \( i \colon f(P) \incl Q \).
    \end{enumerate}
    This factorisation is unique up to unique isomorphism.
\end{prop}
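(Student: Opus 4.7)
My plan is to take $f(P) \subseteq Q$, equip it with the oriented graded structure induced from $Q$, and verify that the surjective corestriction of $f$ and the subset inclusion of $f(P)$ into $Q$ are the required factor morphisms.

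The first step is to show that $f(P)$, regarded as a sub-oriented graded poset of $Q$, is a regular directed complex. Since $f$ is a closed order-preserving map, we have $f(\cl\set{x}) = \cl\set{f(x)}$ for every $x \in P$, so $f(P) = \bigcup_{x \in P} \cl\set{f(x)}$ is closed in $Q$. As a consequence, for each $y \in f(P)$, the principal lower set $\cl\set{y}$ computed in $f(P)$ coincides with the one computed in $Q$, hence is an atom by hypothesis; moreover, the boundary operations $\bd^\a_n$ on elements of $f(P)$ agree whether computed in $f(P)$ or in $Q$. It follows that the subset inclusion $i\colon f(P) \incl Q$ preserves input and output faces, and is vacuously cartesian, being injective.

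Next, let $\hat f\colon P \surj f(P)$ be the corestriction of $f$, which is surjective by construction. The equation $i \after \hat f = f$ together with the injectivity of $i$ lets me transfer the defining axioms of a cartesian map from $f$ to $\hat f$. Order-preservation and closedness are immediate; preservation of boundaries $\hat f(\bd^\a_n x) = \bd^\a_n \hat f(x)$ follows from the same identity for $f$ together with the agreement of $\bd^\a_n$ in $f(P)$ and in $Q$; and finality of $\restr{\hat f}{\bd^\a_n x}$ follows from finality of $\restr{f}{\bd^\a_n x}$, since the zig-zag condition is expressed entirely in terms of equalities $f(y) = f(y')$ and inequalities $f(y) \le f(y_i)$ in $Q$, which are literally the corresponding conditions for $\hat f$ in $f(P)$. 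For the cartesian property, given $x \in P$ and $y \le \hat f(x)$ in $f(P)$, the inequality $y \le f(x)$ holds in $Q$, so there is a cartesian lift $y' \le x$ of $y$ under $f$; the same $y'$ serves as a cartesian lift under $\hat f$, because $\hat f(z) \le y$ in $f(P)$ is the same inequality as $f(z) \le y$ in $Q$.

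Uniqueness up to unique isomorphism is then formal: any other factorisation $f = j' \after g'$ with $g'$ surjective and $j'$ an inclusion has image $f(P)$ as a subset of $Q$, and the oriented graded structure on its domain is forced to coincide with that induced from $Q$; the comparison isomorphism between the two intermediate objects is uniquely pinned down by commutativity with the inclusions into $Q$. The most delicate point of the proof is the verification of the finality clause for $\hat f$, though as sketched it reduces to an unpacking of the definition once one notices that corestricting to a closed image leaves all the relevant comparisons in $Q$ unchanged.
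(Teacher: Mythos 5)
Your proof is correct and follows essentially the same route as the paper's: the paper simply cites \cite[Proposition 6.2.26]{amar_pasting} for the surjection--inclusion factorisation through the image in the larger category of (not necessarily cartesian) maps, notes that all inclusions are cartesian, and checks that \( \hat f \) inherits cartesianness from \( f \) exactly as you do, so your argument amounts to unfolding that citation in place. One small wording point: the inclusion \( i \colon f(P) \incl Q \) is cartesian not ``vacuously\ldots being injective'' but because \( f(P) \) is down-closed in \( Q \) (so every \( y \le i(x) \) already lies in \( f(P) \) and is its own cartesian lift) --- injectivity alone would not suffice.
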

\begin{proof}
    From \cite[Proposition 6.2.26]{amar_pasting}, we have an essentially unique factorisation of \( f \) as a surjective map followed by an inclusion, and all inclusions are cartesian.
    Thus, it suffices to check that \( \hat f \) is cartesian, which follows immediately from the fact that \( f \) is cartesian.
\end{proof}

\noindent Proposition \ref{prop:epi_mono_factorisation} determines an orthogonal factorisation system on \( \rdcpx \) which restricts to an orthogonal factorisation system on \( \atom \).
We will call a surjective cartesian map of atoms a \emph{collapse}.

\begin{dfn}[Eilenberg--Zilber category]
    An \emph{Eilenberg--Zilber category} is a category \( \C \) together with two subcategories \( \C^+ \), \( \C^- \), and a map \( d \colon \Ob(\C) \to \mathbb N \) such that
    \begin{enumerate}[align=left]
        \item[{\crtcrossreflabel{(EZ0)}[enum:ez0]}] each isomorphism in \( \C \) belongs both to \( \C^+ \) and \( \C^- \), and \( d(a) = d(b) \) whenever \( a \) and \( b \) are isomorphic;
        \item[{\crtcrossreflabel{(EZ1)}[enum:ez1]}] if \( f\colon a \to b \) is a morphism in \( \C^+ \) (respectively, \( \C^- \)) that is not an identity, then \( d(a) < d(b) \) (respectively, \( d(a) > d(b) \));
        \item[{\crtcrossreflabel{(EZ2)}[enum:ez2]}] each morphism \( f \colon a \to b \) factors uniquely as \( ip \) with \( i \) in \( \C^+ \) and \( p \) in \( \C^- \);
        \item[{\crtcrossreflabel{(EZ3)}[enum:ez3]}] each morphism \( p \colon a \to b \) in \( \C^- \) has a section, and, given another morphism \( p' \colon a \to b \), the equation \( p = p' \) holds if and only if the sections of \( p \) coincide with the sections of \( p' \).
    \end{enumerate}
    An Eilenberg--Zilber category is \emph{regular} if, in addition, all morphisms in \( \C^+ \) are monomorphisms in \( \C \).
\end{dfn}

\begin{rmk}
	We follow \cite[Definition 1.3.1]{cisinski_higher_2019} for the notion of Eilenberg--Zilber category, which coincides with that of ``cat\'egorie squelettique normale'' in \cite{cisinski_prefaisceaux_2006}.
	By \cite[Proposition 8.2.2]{cisinski_prefaisceaux_2006}, a regular Eilenberg--Zilber category coincides with a ``cat\'egorie squelettique r\'eguliere'' in the sense of \cite[\S 8.2.3]{cisinski_prefaisceaux_2006}. 
	The latter is also called a ``regular skeletal Reedy category'' in \cite{campion2023cubical}.
\end{rmk}

\noindent The rest of this subsection will be devoted to showing that the classes of collapses and inclusions determine the structure of a regular Eilenberg--Zilber category on \( \atom \), with \( d \eqdef \dim \colon \Ob(\atom) \to \mathbb{N} \).

\begin{lem}\label{lem:fiber_has_top_element}
    Let \( p \colon U \surj V \) be a collapse of atoms.
    Then, for all \( y \in V \), the fibre \( \invrs{p}(y) \) has a greatest element.
\end{lem}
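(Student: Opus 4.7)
The plan is to exploit the fact that atoms, by definition, have a greatest element, and then apply the cartesian lift property at this top element.

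First I would establish that $p$ preserves the greatest element, i.e.\ that $p(\top_U) = \top_V$, where $\top_U$ and $\top_V$ denote the greatest elements of $U$ and $V$, respectively. This follows because $p$ is order-preserving and surjective: if $\top_V = p(x)$ for some $x \in U$, then $x \le \top_U$ forces $p(\top_U) \ge p(x) = \top_V$, and so $p(\top_U) = \top_V$ by maximality.

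Next, given $y \in V$, I would note that $y \le \top_V = p(\top_U)$, so the cartesian property applied at $x = \top_U$ produces a cartesian lift $y' \le \top_U$ with $p(y') = y$, satisfying the universal property that any $z \le \top_U$ with $p(z) \le y$ lies below $y'$. Because $U$ is an atom, \emph{every} element of $U$ lies below $\top_U$, so this universal property says in particular that every $z \in p^{-1}(y)$ satisfies $z \le y'$. Hence $y'$ is the greatest element of $p^{-1}(y)$.

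The argument is essentially immediate once one spots that the cartesian lift property only becomes useful here when applied at the top of the poset; there is no real obstacle, and no need to invoke surjectivity for anything beyond identifying $p(\top_U)$ with $\top_V$.
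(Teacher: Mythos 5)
Your proposal is correct and follows the same route as the paper's proof: identify $p(\top_U)$ with $\top_V$ via surjectivity, then take the cartesian lift of $y \le p(\top_U)$ at the greatest element $\top_U$ and observe that its universal property makes it the greatest element of the fibre. The only difference is that you spell out the maximality argument for $p(\top_U) = \top_V$, which the paper leaves implicit.
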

\begin{proof}
	Let \( \top_U \), \( \top_V \) be the greatest elements of \( U \) and \( V \), respectively.
	As \( p \) is surjective, we have \( y \le \top_V = p(\top_U) \), so there is a cartesian lift \( x \le \top_U \) with \( x \in \invrs{p}(y) \).
	Then, in particular, for all \( z \in U \) with \( p(z) = y \), we have \( z \le x \), meaning that \( x \) is the greatest element of \( \invrs{p}(y) \).
\end{proof}

\begin{dfn}[Minimal upper bound]
    Let \( P \) be a poset and \( x, y \in P \).
    A \emph{minimal upper bound} for \( x, y \) is an element \( z \in P \) such that
    \begin{enumerate}
        \item \( x, y \le z \), and
        \item for all \( z' \) such that \( x, y \le z' \le z \), we have \( z = z' \).
    \end{enumerate} 
\end{dfn}

\begin{lem}\label{lem:fiber_collapse_properties}
	Let \( p \colon U \surj V \) be a collapse of atoms, \( y \in V \).
    	Then
    \begin{enumerate}
	\item for all \( x, x' \in \invrs{p}(y) \), \( x \) and \( x' \) have a minimal upper bound in \( \invrs{p}(y) \),
	\item for all \( x, x' \in \invrs{p}(y) \), if \( z \) is a minimal upper bound of \( x, x' \) in \( U \), then \( z \in \invrs{p}(y) \),
	\item for all \( z \in \invrs{p}(y) \), either \( z \) is minimal in \( \invrs{p}(y) \) or it is a minimal upper bound of some \( x, x' \in \invrs{p}(y) \),
	\item for all \( z \in \invrs{p}(y) \), \( z \) is minimal if and only if \( \dim z = \dim y \).
    \end{enumerate}
\end{lem}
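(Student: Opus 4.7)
The plan is to prove the four parts in the order (2), (1), (4), (3): cartesianness of $p$ enters only in (2), while (1) follows from (2) plus Lemma \ref{lem:fiber_has_top_element} and the finiteness of atoms, (4) rests on dimension-monotonicity of maps of regular directed complexes, and (3) combines (4) with the boundary structure of atoms. For (2), I would take $z$ a minimal upper bound of $x, x'$ in $U$ and use cartesianness to lift $y = p(x) \leq p(z)$ to some $w \leq z$ with $p(w) = y$ such that $v \leq w$ whenever $v \leq z$ and $p(v) \leq y$. Applied to $v = x, x'$, this forces $x, x' \leq w \leq z$, so by minimality $w = z$ and $p(z) = y$. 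For (1), I would note that $\top_y$ from Lemma \ref{lem:fiber_has_top_element} is a common upper bound of $x, x'$, so by finiteness of $U$ some minimal upper bound $z$ of $x, x'$ in $U$ exists; by (2), $z \in \invrs{p}(y)$, and it is automatically minimal among upper bounds lying in $\invrs{p}(y)$.

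For (4), I would use that maps of regular directed complexes are dimension-non-increasing (\cite[Lemma 6.2.3]{amar_pasting}), so $\dim z \geq \dim y$. When $\dim z = \dim y$, any $w < z$ has $\dim p(w) \leq \dim w < \dim y$, whence $p(w) \neq y$, so $z$ is minimal in $\invrs{p}(y)$. When $\dim z > \dim y$, setting $n \eqdef \dim z - 1 \geq \dim y$, I would invoke the convention $\bound{n}{-}y = \clset{y}$ (valid once $n \geq \dim y$) together with the naturality $p(\bound{n}{-}z) = \bound{n}{-}y$ to extract some $w \in \bound{n}{-}z$ with $p(w) = y$; since $\dim w \leq n < \dim z$ we get $w < z$, contradicting minimality.

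For (3), given non-minimal $z \in \invrs{p}(y)$, part (4) yields $\dim z > \dim y$, so $n \eqdef \dim z - 1 \geq \dim y$. Because the only maximal element of the atom $\clset{z}$ is $z$, the boundary formula reduces $\bound{n}{\a}z$ to the closure $\bigcup_{x \in \faces{}{\a}z} \clset{x}$ for $\a \in \set{-,+}$. Combined with $p(\bound{n}{\a}z) = \bound{n}{\a}y = \clset{y}$, this produces $x_\a \in \faces{}{\a}z$ with $y \leq p(x_\a)$, and since $p(x_\a) \in \clset{y}$ we get $p(x_\a) = y$. To verify $z$ is a minimal upper bound of $x_-, x_+$, I would use that any $z' < z$ lies in $\clset{z} \setminus \set{z} = \bound{n}{-}z \cup \bound{n}{+}z$; if, say, $z' \in \bound{n}{-}z$, then $x_+ \leq z'$ forces $x_+ \in \bound{n}{-}z$, contradicting that the $n$-dimensional elements of $\bound{n}{-}z$ are exactly $\faces{}{-}z$, which is disjoint from $\faces{}{+}z$. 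The main obstacle will be this last step of (3): both extracting covers $x_\a$ with the correct image and exploiting the disjointness of input and output covers rely crucially on the specific boundary structure of atoms.
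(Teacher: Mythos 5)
Your proposal is correct, and for parts (1) and (2) it is essentially identical to the paper's proof: the top element of the fibre plus finiteness gives (1), and the cartesian lift of \( y \le p(z) \) squeezed between \( x, x' \) and \( z \) gives (2). For parts (3) and (4) the paper simply cites \cite[Lemmas 6.2.4 and 6.2.5]{amar_pasting} --- which assert exactly what you prove, namely that an element \( z \) with \( \dim p(z) < \dim z \) has faces \( x^\pm \in \faces{}{\pm}z \) with \( p(x^\pm) = p(z) \), and that minimality in the fibre is equivalent to \( \dim z = \dim y \) --- whereas you rederive these facts from the compatibility \( p(\bound{n}{\a}z) = \bound{n}{\a}p(z) \) with \( n = \dim z - 1 \); your derivation is sound, and your final minimality check (no proper face of \( z \) can lie above both an input and an output face, by disjointness of \( \faces{}{-}z \) and \( \faces{}{+}z \)) matches the paper's one-line covering argument. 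The only substantive difference is that the paper's argument is citation-based while yours is self-contained; there is no gap.
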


\begin{comm}
	Lemma \ref{lem:fiber_collapse_properties} should be interpreted as the statement that, for all \( y \in V \), the fibre \( \invrs{p}(y) \) is generated by its elements of dimension \( \dim{y} \) under minimal upper bounds in \( U \).
\end{comm}

\begin{proof}
	By Lemma \ref{lem:fiber_has_top_element}, \( \invrs{p}(y) \) has a greatest element, so any two elements have an upper bound, in particular (by finiteness of atoms) a minimal upper bound. 
	Next, suppose \( x, x' \in \invrs{p}(y) \) and \( z \) is a minimal upper bound of \( x, x' \) in \( U \).
	The cartesian lift of \( y \le p(z) \) is the greatest \( z' \le z \) such that \( p(z') = y \).
	Then \( x, x' \le z' \le z \), so by minimality \( z = z' \), hence \( p(z) = y \).
	Next, let \( z \in \invrs{p}(y) \), and suppose \( z \) is not minimal in the fibre.
    	Because maps are dimension-non-increasing, necessarily \( k \eqdef \dim z > \dim p(z) = \dim y \).
	By \cite[Lemma 6.2.4]{amar_pasting}, there exist \( x^+ \in \faces{}{+}z \) and \( x^- \in \faces{}{-}z \) such that \( p(x^+) = p(x^-) = y \).
	Since \( z \) covers both \( x^+ \) and \( x^- \) in \( U \), it is necessarily a minimal upper bound.
	The last point follows from \cite[Lemma 6.2.5]{amar_pasting}.
\end{proof}

\begin{lem} \label{lem:fibration_same_dim_is_dim_preserving}
    Let \( p \colon U \surj V \) be a collapse of atoms of the same dimension.
    Then \( p \) is dimension-preserving.
\end{lem}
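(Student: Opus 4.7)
The plan is to construct a dimension-preserving section \( v \mapsto \hat v \) of \( p \) which picks out the top element of every fibre, and then derive the lemma from the observation that every element of \( U \) sits below the top of its own fibre. The critical book-keeping device is the cartesian lift above \( \top_U \): from the proof of Lemma \ref{lem:fiber_has_top_element}, the greatest element \( \hat v \) of \( \invrs{p}(v) \) is the cartesian lift of \( v \le p(\top_U) = \top_V \) above \( \top_U \), and its defining property is that \( z \le \hat v \) whenever \( z \le \top_U \) and \( p(z) \le v \).

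I then show by downward induction on \( \dim v \) that \( \dim \hat v = \dim v \). The base case \( v = \top_V \) is exactly where the same-dimension hypothesis enters: since \( \top_V \) has dimension \( n = \dim U \) and only \( \top_U \) has dimension \( n \) in the atom \( U \), the equality \( p(\top_U) = \top_V \) forces \( \hat{\top_V} = \top_U \), of dimension \( n \). For the inductive step with \( \dim v = k < n \), I pick some cover \( v' \succ v \) of dimension \( k+1 \), which exists because \( v < \top_V \) in the atom \( V \). By the inductive hypothesis \( \dim \hat{v'} = k+1 \), and the universal property of \( \hat{v'} \) applied to \( \hat v \le \top_U \) with \( p(\hat v) = v \le v' \) yields \( \hat v \le \hat{v'} \). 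This inequality is strict since \( p(\hat v) = v \ne v' = p(\hat{v'}) \), so gradedness forces \( \dim \hat v \le k \); combined with the dimension-non-increasing bound \( \dim \hat v \ge \dim p(\hat v) = k \), we conclude \( \dim \hat v = k \).

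The conclusion is then immediate: for any \( x \in U \), applying the universal property of \( \hat{p(x)} \) to \( x \le \top_U \) gives \( x \le \hat{p(x)} \), whence \( \dim x \le \dim \hat{p(x)} = \dim p(x) \); together with \( \dim p(x) \le \dim x \) this yields equality. The essential insight, and the main point of friction in finding the argument, is that one should not attempt to control the dimension of arbitrary preimages of \( v \) but only that of the canonical cartesian lift; the same-dimension hypothesis is used solely at the base of the induction, but it is indispensable there, since without it \( \top_U \) could have dimension strictly greater than \( \top_V \) and the induction would never get off the ground.
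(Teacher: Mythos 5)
Your proof is correct, and it takes a genuinely different route from the paper's. The paper argues by induction on \( n = \dim U = \dim V \): using a cartesian lift above \( \top_U \), it shows that \( p \) sends each face \( x \in \faces{}{}\top_U \) to a face of \( \top_V \), so that \( \restr{p}{\clset{x}} \) is again a collapse of atoms of the same dimension, and concludes by the inductive hypothesis. You instead keep the single map \( p \) fixed and run a downward induction on \( \dim v \) for \( v \in V \), controlling only the dimension of the greatest element \( \hat v \) of each fibre (which, as you note, is exactly the cartesian lift of \( v \le \top_V \) above \( \top_U \)), and then deduce the statement for arbitrary \( x \in U \) from \( x \le \hat{p(x)} \). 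Both arguments hinge on the same device --- cartesian lifts above \( \top_U \) --- but yours avoids having to check that faces map to faces and that restrictions of collapses are again collapses of atoms of equal dimension, at the price of a little bookkeeping with covers (you use implicitly that in a graded poset a cover raises dimension by exactly one, and that every non-maximal element has a cover; both are fine but worth a word). A pleasant byproduct of your argument: every \( x \in \invrs{p}(v) \) satisfies \( x \le \hat v \) and \( \dim x \ge \dim v = \dim \hat v \), so gradedness forces \( x = \hat v \); the fibres are therefore singletons and \( p \) is a bijection, which gets you most of the way to Corollary \ref{cor:fibration_same_dim_is_iso} as well.
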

\begin{proof}
    We proceed by induction on \( n \eqdef \dim U = \dim V \).
    If \( n = 0 \), then \( U = V = \pt \) and there is nothing to do. 
    Suppose \( n > 0 \).
    For each \( x \in \faces{}{} \top_U \), we have some \( y \in V \) such that \( p(x) \le y \in \faces{}{} \top_V = \faces{}{} p(\top_U) \). 
    Let \( x' \le \top_U \) be the cartesian lift of \( y \le p(\top_U) \); 
    by definition, \( x \le x' \), but \( x \in \faces{}{} \top_U \), so either \( x' = \top_U \), in which case \( \top_V = p(x') = y \in \faces{}{} \top_V \) is a contradiction, or \( x = x' \), hence \( p(x) = y \in \faces{}{} \top_V \). 
    This proves that the map
	    \( \restr p {\clset{x}} \colon \clset{x} \to p(\clset{x}) \)
    is a collapse of atoms of the same dimension, so by the inductive hypothesis, it is dimension-preserving.
    Since \( x \) was arbitrary in \( \faces{}{} \top_U \) and, by assumption, \( \dim p(\top_U) = \dim \top_U \), we conclude that \( p \) is dimension-preserving.
\end{proof}

\begin{cor}\label{cor:fibration_same_dim_is_iso}
    Let \( p \colon U \surj V \) be a collapse of atoms of the same dimension.
    Then \( p \) is an isomorphism.
\end{cor}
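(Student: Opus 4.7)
The plan is to combine the previous lemma with Lemma \ref{lem:fiber_has_top_element} and the strict monotonicity of dimension on a graded poset: in an atom, $a < b$ implies $\dim a < \dim b$, so a dimension-preserving order-preserving map cannot identify comparable points.

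Concretely, by Lemma \ref{lem:fibration_same_dim_is_dim_preserving}, $p$ is dimension-preserving, so every element of every fibre $\invrs{p}(y)$ has dimension exactly $\dim y$. By Lemma \ref{lem:fiber_has_top_element}, each fibre $\invrs{p}(y)$ has a greatest element $x_y$; any other $x' \in \invrs{p}(y)$ would satisfy $x' < x_y$ together with $\dim x' = \dim x_y$, which is impossible in the graded poset $U$. Hence the fibres are singletons, and $p$ is a bijection of the underlying sets.

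To conclude that $p$ is an isomorphism in $\rdcpx$, by Remark \ref{rmk:no_nontrivial_auto} it suffices to show that $p$ is an isomorphism in $\ogpos$, that is, that $\invrs{p}$ is an order-preserving function of oriented graded posets. Order-preservation follows from the cartesian property of $p$: given $y_1 \le y_2$ in $V$, the cartesian lift of $y_1 \le p(\invrs{p}(y_2)) = y_2$ yields some $x \le \invrs{p}(y_2)$ with $p(x) = y_1$, and by injectivity of $p$ we get $x = \invrs{p}(y_1)$. The requirement that $\invrs{p}$ induce bijections on input and output faces is then obtained by transporting the corresponding property of $p$ along the bijection.

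No serious obstacle arises: the dimension-preservation step, which is the heart of the argument, has already been handled by Lemma \ref{lem:fibration_same_dim_is_dim_preserving}, and the only point requiring a little care is converting a bijective cartesian map into an isomorphism in $\rdcpx$, which is handled by Remark \ref{rmk:no_nontrivial_auto}.
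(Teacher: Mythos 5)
Your proof is correct, and it reaches the conclusion by a somewhat different route in its second half. The paper's own proof is a one-liner: apply Lemma \ref{lem:fibration_same_dim_is_dim_preserving} to get that \( p \) is dimension-preserving, and then invoke a general result from the reference (\textit{Proposition 6.2.18} of the cited book) stating that such a surjective map of regular directed complexes is an isomorphism; that result makes no use of cartesianness. You instead give a self-contained argument exploiting the fibration structure: gradedness gives strict monotonicity of \( \dim \) along \( < \), so once \( p \) is dimension-preserving the greatest element of each fibre (Lemma \ref{lem:fiber_has_top_element}) forces the fibre to be a singleton, hence \( p \) is bijective; the cartesian lifts then show the inverse is order-preserving, and Remark \ref{rmk:no_nontrivial_auto} converts an isomorphism of underlying oriented graded posets into one in \( \rdcpx \). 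This buys independence from the external citation at the cost of a slightly hand-wavy final step (``transporting the corresponding property of \( p \)'' to see that \( \invrs{p} \) induces bijections on \( \faces{}{\a} \)); that step does go through, since a dimension-preserving poset isomorphism that is closed and satisfies \( p(\bound{n}{\a}x) = \bound{n}{\a}p(x) \) must match up the sets of faces, but it deserves a sentence rather than a clause. No genuine gap.
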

\begin{proof}
	Follows from Lemma \ref{lem:fibration_same_dim_is_dim_preserving} and 
	\cite[Proposition 6.2.18]{amar_pasting}.
\end{proof}

\begin{lem}\label{lem:collapse_thin_interval}
	Let \( p \colon P \to Q \) be a cartesian map of regular directed complexes, and let \( z < x \in P \) with \( \dim{x} = \dim{z} + 2 \).
	If \( p(x) \in \faces{}{} p(z) \), then there exists \( y \in \faces{}{} x \) such that \( p(y) = p(x) \).
\end{lem}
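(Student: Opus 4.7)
The plan is to exploit the oriented thinness of the interval \( [z,x] \) together with the cartesian lifting property of \( p \). Since \( \dim x = \dim z + 2 \), oriented thinness provides \( [z,x] = \{ z, t_1, t_2, x \} \) with \( t_1, t_2 \in \faces{}{} x \), both covering \( z \), and incomparable to each other (any comparability would produce a chain of length \( \ge 3 \) in \( [z,x] \)). The idea is to locate the desired \( y \) among \( t_1, t_2 \), i.e., to show that at least one of them is sent by \( p \) to \( p(x) \).

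Reading the hypothesis as the cover relation \( p(z) \prec p(x) \) in \( Q \), apply the cartesian lifting property of \( p \) at \( x \) to the inequality \( p(z) \le p(x) \): this yields a lift \( z' \le x \) with \( p(z') = p(z) \), characterised by the property that every \( w \le x \) with \( p(w) \le p(z) \) satisfies \( w \le z' \). Applied to \( w = z \), this gives \( z \le z' \), so \( z' \in [z,x] = \{ z, t_1, t_2, x \} \). The case \( z' = x \) is ruled out, since it would force \( p(x) = p(z') = p(z) \), contradicting \( p(z) \prec p(x) \).

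In each of the three remaining cases there is some \( t_i \) with \( t_i \not\le z' \): if \( z' = z \), both \( t_1, t_2 \) qualify; if \( z' = t_k \), the other \( t_{3-k} \) qualifies by incomparability of \( t_1 \) and \( t_2 \). For such a \( t_i \), the universal property of \( z' \) forbids \( p(t_i) \le p(z) \). Combined with \( p(z) \le p(t_i) \le p(x) \) from order-preservation, and with the fact that the cover relation \( p(z) \prec p(x) \) leaves no room for strict intermediates in \( Q \), this forces \( p(t_i) = p(x) \); setting \( y \eqdef t_i \) concludes the argument.

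No substantial obstacle is anticipated: the proof is a direct splicing of oriented thinness, which fixes the combinatorial shape of \( [z,x] \), with the universal property of the cartesian lift, which pins the fibre over \( p(z) \) inside this shape. The only point of care is to check that the cartesian lift stays inside the thin interval and then to track the three cases.
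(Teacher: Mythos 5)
Your proof is correct and follows essentially the same route as the paper's: both use oriented thinness to reduce the interval \( [z,x] \) to a diamond with two intermediate faces, take the cartesian lift \( z' \le x \) of \( p(z) \le p(x) \), and use the covering relation \( p(z) \prec p(x) \) to force one of the intermediate elements onto \( p(x) \); the paper phrases this as a contradiction (if both intermediates mapped to \( p(z) \) then \( z' = x \)) where you run a direct case analysis on the position of \( z' \) in the diamond. Your reading of the hypothesis as \( p(z) \prec p(x) \) also matches how the paper's own proof (and the application in Lemma \ref{lem:interval_contract_exists_face}) actually uses it.
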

\begin{proof}
	By oriented thinness, \( \cofaces{}{} z \cap \faces{}{} x = \set{y_0, y_1} \) for precisely two elements \( y_0, y_1 \). 
    	Let \( z' \le x \) be a cartesian lift of \( p(z) < p(x) \), and suppose by way of contradiction that \( p(y_0), p(y_1) \neq p(x) \).
	Then necessarily \( p(y_0) = p(y_1) = p(z) \), hence \( y_0, y_1 \le z' \le x \).
	Since \( y_0, y_1 \in \faces{}{}x \), necessarily \( z' = x \), implying that \( p(z) = p(z') = p(x) \), a contradiction.
\end{proof}

\begin{lem}\label{lem:interval_contract_exists_face}
    Let \( p \colon P \to Q \) be a cartesian map of regular directed complexes, \( x \le y \) in \( P \), and suppose that \(\dim p(y) - \dim p(x) < \dim y - \dim x\).
    Then there exists \( y' \in \faces{}{} y \) such that \( x \le y' \) and \( p(y') = p(y) \).
\end{lem}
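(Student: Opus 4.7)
The plan is to fix a maximal chain $y = z_n \succ \ldots \succ z_0 = x$ in $[x, y]$, of length $n \eqdef \dim y - \dim x$, identify a \emph{collision index} $i$ where $p(z_i) = p(z_{i-1})$ by a pigeonhole argument, and then iteratively shift the largest such index up the chain until it reaches $n$; at that point $y' \eqdef z_{n-1}$ is a face of $y$ containing $x$ with $p(y') = p(y)$.

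First I would carry out the pigeonhole step: since $p$ is order-preserving into a graded poset, each strict step $p(z_{i-1}) < p(z_i)$ contributes at least $1$ to the total dimension increase $\dim p(y) - \dim p(x) < n$, so there are strictly fewer than $n$ strict steps, and therefore at least one index $i^*$ satisfies $p(z_{i^*}) = p(z_{i^*-1})$. I take $i^*$ to be maximal among such. If $i^* = n$ we are done.

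For the shifting step, assume $i^* < n$. By oriented thinness, the $2$-interval $[z_{i^*-1}, z_{i^*+1}]$ in $P$ consists of exactly four elements, with two intermediates $z_{i^*}$ and some other $z_{i^*}'$. I would then invoke Lemma~\ref{lem:collapse_thin_interval} to obtain $\tilde z \in \faces{}{}z_{i^*+1}$ with $z_{i^*-1} \le \tilde z$ and $p(\tilde z) = p(z_{i^*+1})$; since $p(\tilde z) = p(z_{i^*+1}) \ne p(z_{i^*-1}) = p(z_{i^*})$, necessarily $\tilde z = z_{i^*}'$. Replacing $z_{i^*}$ by $z_{i^*}'$ produces a new maximal chain whose largest collision index is now $i^* + 1$; iterating at most $n - i^*$ times lands the collision at $n$.

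The main obstacle is verifying, at each iteration, the hypothesis of Lemma~\ref{lem:collapse_thin_interval}, namely that $p(z_{i^*+1})$ covers $p(z_{i^*-1})$ in $Q$. The collision is what pays off here: cartesian lifts at $z_{i^*+1}$ of elements above $p(z_{i^*-1})$ remain above $z_{i^*-1}$, so $p$ restricts to a surjection from $[z_{i^*-1}, z_{i^*+1}]$ onto $[p(z_{i^*-1}), p(z_{i^*+1})]$; but $p(z_{i^*}) = p(z_{i^*-1})$ forces the four-element $P$-interval to map onto at most three distinct values. Any $Q$-interval of dimension gap $d$ contains a chain of $d+1$ elements and, when $d = 2$, is forced by oriented thinness in $Q$ to contain exactly four elements. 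Hence $d \le 1$, and combined with $p(z_{i^*-1}) < p(z_{i^*+1})$ strict (by the maximality of $i^*$), $d = 1$, as required.
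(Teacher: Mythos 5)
Your proof is correct and follows essentially the same route as the paper's: take a saturated chain from \( x \) to \( y \), locate the largest collision index by a counting argument, and use oriented thinness together with Lemma \ref{lem:collapse_thin_interval} to push that index up to the top of the chain. The only divergence is that you justify the covering hypothesis \( p(z_{i^*-1}) \prec p(z_{i^*+1}) \) needed for Lemma \ref{lem:collapse_thin_interval} by an explicit cartesian-lift and element-counting argument, whereas the paper relies tersely on the fact that maps of regular directed complexes send faces to faces or collapse them; your version is more self-contained but reaches the same conclusion.
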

\begin{proof}
	Let \( n \eqdef \dim y - \dim x \).
	If \( n = 1 \), then \( x \in \faces{}{} y \) and \( p(x) = p(y) \), so \( y' \eqdef  x \).
	Suppose \( n > 0 \) and let \( x = y_0 \prec y_1 \ldots \prec y_n = y \) be a chain in the covering diagram of \( P \).
    Since \( \dim p(y) - \dim p(x) < \dim y - \dim x \), there exists a largest index \( i \) such that \( p(y_i) = p(y_{i - 1}) \), and we can choose a chain such that \( i \) is maximal.
    We claim that \( i = n \).
    Indeed, if \( i < n \), by oriented thinness, we can complete the chain \( y_{i-1} \prec y_{i} \prec y_{i+1} \) to the diamond
    \[
	    \begin{tikzcd}[sep=small]
            & {y_{i + 1}} \\
            {y_i} && {y'_i} \\
            & {y_{i - 1}}
            \arrow[no head, from=2-1, to=1-2]
            \arrow[no head, from=3-2, to=2-1]
            \arrow[no head, from=3-2, to=2-3]
            \arrow[no head, from=2-3, to=1-2]
        \end{tikzcd}
\]
	for exactly one \( y'_i \in P \).
    By assumption, \( p(y_{i - 1}) = p(y_i) \in \faces{}{} p(y_{i + 1}) \), so Lemma \ref{lem:collapse_thin_interval} applies, yielding \( p(y'_i) = p(y_{i + 1}) \).
    Therefore the chain
    \[
        x = y_0 \prec \ldots \prec y_{i - 1} \prec y'_i \prec y_{i + 1} \prec \ldots \prec y_n = y 
\]
    is such that \( p(y'_i) = p(y_{i + 1}) \), contradicting the maximality of \( i \). 
    We conclude that \( p(y) = p(y_{n-1}) \), and the statement follows with \( y' \eqdef y_{n-1} \).
\end{proof}

\begin{lem}\label{lem:minimal_element_less_than_minimal_element_top_fiber}
	Let \( p \colon U \surj V \) be a collapse of atoms, \( y \in V \), and let \( x \) be a minimal element of \( \invrs{p}(y) \).
	Then there exists a minimal element \( z \) of \( \invrs{p}(\top_V) \) such that \( x \le z \). 
\end{lem}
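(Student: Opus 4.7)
The plan is to find the desired \( z \) as a minimal element of the auxiliary set
\begin{equation*}
	S \eqdef \set{z \in U \mid x \le z \text{ and } p(z) = \top_V},
\end{equation*}
and then show that minimality of \( z \) in \( S \) forces \( \dim z = \dim V \), after which Lemma \ref{lem:fiber_collapse_properties}(4) converts this into minimality of \( z \) in the full fibre \( \invrs{p}(\top_V) \).

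First, \( S \) is nonempty: since \( p \) is surjective, \( p(\top_U) = \top_V \), and \( x \le \top_U \) trivially, so \( \top_U \in S \). By finiteness of atoms, I pick \( z \) minimal in \( S \). Because cartesian maps are dimension-non-increasing, \( \dim z \ge \dim \top_V = \dim V \), so it remains to rule out \( \dim z > \dim V \).

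Suppose for contradiction that \( \dim z > \dim V \). Since \( x \) is minimal in \( \invrs{p}(y) \), Lemma \ref{lem:fiber_collapse_properties}(4) gives \( \dim x = \dim y \), so
\begin{equation*}
	\dim p(z) - \dim p(x) = \dim V - \dim y < \dim z - \dim x.
\end{equation*}
Then Lemma \ref{lem:interval_contract_exists_face}, applied to the pair \( x \le z \), produces some \( y' \in \faces{}{} z \) with \( x \le y' \) and \( p(y') = p(z) = \top_V \). But then \( y' \in S \) with \( y' < z \), contradicting the minimality of \( z \) in \( S \). Hence \( \dim z = \dim V \), and Lemma \ref{lem:fiber_collapse_properties}(4) ensures that \( z \) is minimal in \( \invrs{p}(\top_V) \).

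The argument is a short dimension count, and I do not anticipate a genuine obstacle. The one point to notice is that minimality of \( x \) in its fibre (hence \( \dim x = \dim y \)) is exactly what makes the dimension inequality needed for Lemma \ref{lem:interval_contract_exists_face} strict; without this, the descent step that yields the contradiction with minimality of \( z \) would not be available.
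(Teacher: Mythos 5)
Your proof is correct and uses the same essential mechanism as the paper's: the equality \( \dim x = \dim y \) from Lemma \ref{lem:fiber_collapse_properties}(4) makes the dimension inequality strict, so Lemma \ref{lem:interval_contract_exists_face} lets you descend within \( \invrs{p}(\top_V) \) while staying above \( x \), until dimension \( \dim V \) is reached and Lemma \ref{lem:fiber_collapse_properties}(4) gives minimality. The only difference is presentational: the paper builds the descending chain \( \top_U = x_0 \succ \cdots \succ x_k \) explicitly by iteration, whereas you take a minimal element of the set of such upper bounds and argue by contradiction.
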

\begin{proof}
	Let \( k \eqdef \dim{V} - \dim{U} \); we will construct a chain \( x_k \prec \ldots \prec x_0 \) in \( \invrs{p}(\top_V) \) such that \( x \le x_i \) for all \( i \in \set{0, \ldots, k} \).
	We let \( x_0 \eqdef \top_U \).
	If \( k = 0 \), then we are done.
	Otherwise, suppose that we have constructed \( x_i \) for \( i < k \).
	By Lemma \ref{lem:fiber_collapse_properties}, we have \( \dim x = \dim y \), so
    	\[
        	\dim p(x_i) - \dim p(x) = \dim V - \dim y = \dim V - \dim x < \dim x_i - \dim x.
	\]
    	Thus Lemma \ref{lem:interval_contract_exists_face} applies, and we can find \( x_{i+1} \in \faces{}{} x_i \) such that \( p(x_{i+1}) = \top_V \) and \( x \le x_{i+1} \).
	Since \( \dim{x_k} = \dim{\top_V} \), by Lemma \ref{lem:fiber_collapse_properties}, \( x_k \) is minimal in the fibre, and the statement holds for \( z \eqdef x_k \).
\end{proof}

\begin{prop}\label{prop:atom_ez_category}
    Let
	\begin{enumerate}
	\item \( \atom^+ \) be the wide subcategory of \( \atom \) on the inclusions, 
	\item \( \atom^- \) be the wide subcategory of \( \atom \) on the collapses, and 
	\item \( d \eqdef \dim \colon \Ob(\atom) \to \mathbb N \).
	\end{enumerate}
	With this structure, \( \atom \) is a regular Eilenberg--Zilber category.
\end{prop}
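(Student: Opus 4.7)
The plan is to verify axioms \ref{enum:ez0}--\ref{enum:ez3} and regularity separately, by combining the lemmas of this subsection with the rigidity of atoms (Remark \ref{rmk:no_nontrivial_auto}) and the fact that \( \atom \) is skeletal. Axiom \ref{enum:ez0} is immediate: isomorphisms in \( \atom \) are both injective and surjective, hence lie in both \( \atom^+ \) and \( \atom^- \), and \( \dim \) is evidently invariant under isomorphism. Axiom \ref{enum:ez2} follows from Proposition \ref{prop:epi_mono_factorisation}, whose essential uniqueness becomes strict uniqueness in a skeleton (the mediating isomorphism of the middle object is an automorphism of an atom, hence the identity by rigidity). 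Regularity holds because inclusions are injective on underlying sets, hence monomorphisms in \( \atom \).

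For \ref{enum:ez1} I would treat the two subcategories dually. If \( p \colon U \surj V \) is a non-identity collapse with \( \dim U = \dim V \), then Corollary \ref{cor:fibration_same_dim_is_iso} makes \( p \) an isomorphism, and rigidity together with skeletality forces \( p = \idd{U} \), a contradiction; since maps are dim-non-increasing, this gives \( \dim U > \dim V \). For a non-identity inclusion \( \iota \colon U \incl V \), the key observation is that inclusions preserve dimension on elements: by injectivity, a maximal chain ending at \( x \in U \) maps to a strict chain ending at \( \iota(x) \), so \( \dim \iota(x) \geq \dim x \), and the reverse inequality is automatic from \cite[Lemma 6.2.3]{amar_pasting}. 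If \( \dim U = \dim V \), then \( \iota(\top_U) \) is an element of \( V \) of dimension \( \dim V \), hence equal to \( \top_V \); the image of \( \iota \) is closed and generated by \( \iota(\top_U) \), so it equals \( \clset{\top_V} = V \), making \( \iota \) a bijective morphism of oriented graded posets, and thus \( \idd{U} \) by rigidity and skeletality.

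For \ref{enum:ez3} I would split existence and determination. For existence, given a collapse \( p \colon U \surj V \), I pick a minimal element \( z \in \invrs{p}(\top_V) \), which by Lemma \ref{lem:fiber_collapse_properties} has dimension \( \dim V \); then \( \clset{z} \subseteq U \) is an atom of dimension \( \dim V \), and the restriction \( \restr{p}{\clset{z}} \colon \clset{z} \surj V \) is itself a collapse (cartesian lifts provided by \( p \) below \( \top_U \ge z \) lie in \( \clset{z} \)), so an isomorphism by Corollary \ref{cor:fibration_same_dim_is_iso}; composing its inverse with \( \clset{z} \incl U \) gives the desired section. For determination, I first observe that a section \( s \) is fully determined by \( s(\top_V) \), because \( p \) being cartesian forces each \( s(y) \) to be the unique cartesian lift of \( y \leq p(s(\top_V)) \) in \( \clset{s(\top_V)} \), and \( s(\top_V) \) ranges exactly over the minimal elements of \( \invrs{p}(\top_V) \). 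Now if \( p, p' \colon U \surj V \) share the same sections, then \( \invrs{p}(\top_V) \) and \( \invrs{p'}(\top_V) \) have the same minimal elements, and for each such \( z \) and each \( y \in V \), the value \( s_z(y) \) lies in \( \invrs{p'}(y) \). By Lemma \ref{lem:minimal_element_less_than_minimal_element_top_fiber} these values exhaust the minimal elements of each fibre \( \invrs{p}(y) \), and by Lemma \ref{lem:fiber_collapse_properties} every element of \( \invrs{p}(y) \) is generated from them by iterated minimal upper bounds in \( U \); part (2) of the same lemma ensures these minimal upper bounds remain in \( \invrs{p'}(y) \). Hence \( \invrs{p}(y) \subseteq \invrs{p'}(y) \) for every \( y \), and symmetry yields \( p = p' \). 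I expect this determination step to be the main obstacle, as it is where the full combinatorial power of the structural lemmas about fibres of collapses is deployed.
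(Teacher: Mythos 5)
Your proposal is correct and follows essentially the same route as the paper: the same case split over \ref{enum:ez0}--\ref{enum:ez3} plus regularity, the same use of Proposition \ref{prop:epi_mono_factorisation} with skeletality and rigidity for \ref{enum:ez2}, and the same \ref{enum:ez3} argument identifying sections with minimal elements of \( \invrs{p}(\top_V) \) and reconstructing fibres from their minimal elements via minimal upper bounds (Lemmas \ref{lem:fiber_collapse_properties} and \ref{lem:minimal_element_less_than_minimal_element_top_fiber}). The only cosmetic difference is in \ref{enum:ez1} for inclusions, where the paper simply notes that a non-trivial inclusion misses \( \top_V \), while you prove dimension-preservation via chains and argue by contradiction.
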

\begin{proof}
	Any isomorphism is both a collapse and an inclusion, so \ref{enum:ez0} is true.
	If \( i \colon U \incl V \) is a non-trivial inclusion, then \( \top_V \) is not in the image of \( i \), hence \( \dim U < \dim V \).
	If \( p \colon U \surj V \) is a non-trivial collapse, then \( \dim U > \dim V \) by the contrapositive of Corollary \ref{cor:fibration_same_dim_is_iso}.
	This proves \ref{enum:ez1}. 
	Next, \ref{enum:ez2} follows from Proposition \ref{prop:epi_mono_factorisation} combined with the fact that \( \atom \) is skeletal and that atoms have no non-trivial automorphisms.

	To conclude, it suffices to prove \ref{enum:ez3}. 
	Let \( p \colon U \surj V \) be a collapse, and take any minimal element \( z \in \invrs{p}(\top_V) \).
	By Lemma \ref{lem:fiber_collapse_properties}, \( \dim z = \dim p(z) = \top_V \), so the map
	\( \restr{p}{\clset{z}} \colon \clset{z} \to V \)
    	is an isomorphism. 
	It follows that the inclusion \( \iota_z \colon \clset{z} \incl U \) is a section of \( p \). 
	Conversely, if \( \iota \colon V \incl U \) is a section of \( p \), then \( \iota(\top_V) \) is a minimal element of \( \invrs{p}(\top_V) \).
	This establishes that \( p \) has a section, and that its sections are in bijection with the minimal elements of \( \invrs{p}(\top_V) \).

	Let \( p' \colon U \to V \) be another collapse with the same sections as \( p \), let \( y \in V \), and let \( x \) be a minimal element of \( \invrs{p}(y) \).
	By Lemma \ref{lem:minimal_element_less_than_minimal_element_top_fiber}, \( x \in \clset{z} \) for some minimal element \( z \in \invrs{p}(\top_V) \).
	Let \( \iota \colon V \incl U \) be the section of \( p \) determined by \( z \); then \( \iota \) is also a section of \( p' \), so \( p(x) = p'(x) = y \).
	It follows that the minimal elements of \( p^{-1}(y) \) coincide with the minimal elements of \( p'^{-1}(y) \). 
	By Lemma \ref{lem:fiber_collapse_properties}, \( \invrs{p}(y) \) and \( \invrs{p'}(y) \) can be reconstructed as the closures of their minimal elements under minimal upper bounds in \( U \), so \( \invrs{p}(y) = \invrs{p'}(y) \).
	Since \( y \) was arbitrary, we conclude that \( p = p' \).

	Finally, the functor sending a map of atoms to its underlying function of sets is evidently faithful, which implies that inclusions are monomorphisms in \( \atom \).
	This completes the proof.
\end{proof}

%%%%%%%%%%%%%%

\subsection{Gray products and joins}

In this section, we recall the definitions of the Gray product and join of regular directed complexes; see \cite[Chapter 7]{amar_pasting} for a more thorough treatment.

\begin{dfn}[Gray product]
    Let \( P, Q \) be oriented graded posets. The \emph{Gray product} of \( P \) and \( Q \) is the oriented graded poset \( P \gray Q \) whose
    \begin{itemize}
        \item underlying graded poset is the cartesian product \( P \times Q \),
        \item orientation is defined, for all \( (x, y) \in P \times Q \) and \( \a \in \set{ -, + } \), by the equation \( \Delta^\a(x, y) = \Delta^\a x \times \set{ y } + \set{ x } \times \Delta^{(-)^{\dim x} \a} y \).
    \end{itemize}
\end{dfn}

\begin{prop}\label{prop:gray_monoidal_rdcpx}
    The Gray product determines a monoidal structure on \( \rdcpx \), whose unit is the point \( \pt \).
\end{prop}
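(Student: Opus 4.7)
My plan is to leverage the work already done in \cite[Chapter 7]{amar_pasting}, where the Gray product is established as a monoidal structure on the larger category $\rdcpxmap$ of regular directed complexes and (not necessarily cartesian) maps, with unit $\pt$. The associator $(P \gray Q) \gray R \iso P \gray (Q \gray R)$ and unitors $\pt \gray P \iso P \iso P \gray \pt$ are induced from the canonical bijections of underlying sets coming from the cartesian product; these are isomorphisms in $\rdcpxmap$ and, being isomorphisms, are automatically cartesian (hence lie in $\rdcpx$). The coherence axioms (pentagon and triangle) then follow formally, since the underlying-set functor is faithful and the coherence diagrams commute at the level of cartesian products of sets.

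The only genuinely new point to verify is therefore that the Gray product of two cartesian maps is cartesian, so that $\gray$ restricts from $\rdcpxmap$ to a bifunctor on $\rdcpx$. Let $f \colon P \to P'$ and $g \colon Q \to Q'$ be cartesian maps. Given $(x, y) \in P \gray Q$ and $(x', y') \le (f \gray g)(x, y) = (f(x), g(y))$, we have $x' \le f(x)$ in $P'$ and $y' \le g(y)$ in $Q'$, since the underlying order on $P \gray Q$ is the product order. Applying the cartesian property of $f$ and $g$ separately yields a cartesian lift $\tilde{x} \le x$ of $x' \le f(x)$ and a cartesian lift $\tilde{y} \le y$ of $y' \le g(y)$.

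I then claim $(\tilde{x}, \tilde{y})$ is a cartesian lift of $(x', y') \le (f(x), g(y))$ along $f \gray g$. Clearly $(\tilde{x}, \tilde{y}) \le (x, y)$ and $(f \gray g)(\tilde{x}, \tilde{y}) = (x', y')$. For the universal property, suppose $(z_1, z_2) \le (x, y)$ with $(f(z_1), g(z_2)) \le (x', y')$. Then $z_1 \le x$ and $f(z_1) \le x'$, so $z_1 \le \tilde{x}$ by the cartesian lift property of $\tilde{x}$; symmetrically $z_2 \le \tilde{y}$, so $(z_1, z_2) \le (\tilde{x}, \tilde{y})$ in $P \gray Q$. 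This concludes the proof.

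I do not expect any real obstacle: once the monoidal structure on $\rdcpxmap$ (or $\ogpos$) is in hand from \cite{amar_pasting}, the restriction to $\rdcpx$ reduces to the entirely routine observation that Grothendieck fibrations of posets are stable under cartesian product, which is what the verification above amounts to. The unit isomorphisms and coherence transfer for free because they are isomorphisms.
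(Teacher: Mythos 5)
Your proposal is correct and follows exactly the paper's route: cite the monoidal structure already established on the larger category in \cite[Chapter 7]{amar_pasting} and reduce the claim to checking that the Gray product of two cartesian maps is cartesian. The paper dispatches that last step by appealing to the general fact that Grothendieck fibrations are closed under cartesian products, which is precisely the componentwise lift argument you spell out.
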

\begin{proof}
    By \cite[Proposition 7.2.19]{amar_pasting}, it suffices to show that, if \( f, g \) are cartesian, then \( f \gray g \) is cartesian.
    This follows from the general fact that Grothendieck fibrations are closed under cartesian products.
\end{proof}

\begin{lem}\label{lem:gray_preserves_epi_mono}
	The Gray product of two atoms is an atom, the Gray product of two inclusions is an inclusion, and the Gray product of two surjective cartesian maps is surjective.
\end{lem}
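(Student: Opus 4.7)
The plan is to dispatch the three claims using that, at the level of underlying functions, the Gray product $f \gray g$ of two cartesian maps is the cartesian product $f \times g$ of their underlying functions of sets. The cartesianness of $f \gray g$ as a map of regular directed complexes is already guaranteed by Proposition \ref{prop:gray_monoidal_rdcpx}, so only the additional properties (having a greatest element, injectivity, surjectivity) need verification.

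For the first claim, I would invoke the result from \cite[Chapter 7]{amar_pasting} that the Gray product of two molecules is a molecule. Given atoms $U, V$ with greatest elements $\top_U, \top_V$, the oriented graded poset $U \gray V$ is then a molecule whose underlying poset is the cartesian product $U \times V$; this product has $(\top_U, \top_V)$ as its greatest element. Hence $U \gray V$ is a molecule with a greatest element, that is, an atom.

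For the second claim, if $f \colon P \incl P'$ and $g \colon Q \incl Q'$ are inclusions, then the underlying function of $f \gray g$ is $f \times g \colon P \times Q \to P' \times Q'$, which is injective as a product of two injective maps. Combined with Proposition \ref{prop:gray_monoidal_rdcpx}, this shows that $f \gray g$ is an injective cartesian map, i.e.\ an inclusion. For the third claim, if $f$ and $g$ are surjective, then $f \times g$ is surjective as well (its image is $f(P) \times g(Q) = P' \times Q'$), so $f \gray g$ is a surjective cartesian map.

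The reason the proof is so short is that the deep combinatorial input, namely that the Gray product preserves molecules, has already been carried out in \cite[Chapter 7]{amar_pasting}; the only obstacle one might worry about is verifying that the orientation on $P \gray Q$ is such that $(\top_P, \top_Q)$ remains the greatest element of the underlying poset, but this is immediate from the definition since the Gray product modifies the orientation of edges in the Hasse diagram but not the underlying order.
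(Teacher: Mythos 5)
Your proof is correct and takes essentially the same route as the paper, which likewise reduces all three claims to the facts that the underlying poset of a Gray product is the cartesian product of posets (so a product of posets with greatest elements has a greatest element) and that products of injective (resp.\ surjective) functions are injective (resp.\ surjective). The extra care you take in routing the first claim through ``Gray products of molecules are molecules'' is a harmless elaboration; the paper gets the same conclusion from Proposition \ref{prop:gray_monoidal_rdcpx}, since a regular directed complex with a greatest element is automatically an atom.
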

\begin{proof}
	The product of two posets with greatest element has a greatest element, the product of two injective functions is injective, and the product of two surjective functions is surjective.
\end{proof}

\begin{cor}\label{cor:monoidal_on_atom}
	The monoidal structure \( (\rdcpx, \gray, \pt) \) restricts to monoidal structures on \( \atom \), \(\atom^+ \), and \( \atom^- \).
\end{cor}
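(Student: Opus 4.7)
The plan is to assemble the pieces already established in Proposition \ref{prop:gray_monoidal_rdcpx} and Lemma \ref{lem:gray_preserves_epi_mono}. Recall that a monoidal structure on $\rdcpx$ restricts to a subcategory provided that the subcategory contains the unit, is closed under the tensor product on both objects and morphisms, and the associator and unitors lie in the subcategory; the coherence axioms are then inherited for free.

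First I would handle $\atom$. The unit $\pt$ is an atom, and by Lemma \ref{lem:gray_preserves_epi_mono} the Gray product of two atoms is an atom, so the full subcategory on atoms in $\rdcpx$ is closed under $\gray$. To pass to the skeleton $\atom$, one transports the monoidal structure along the equivalence choosing one representative in each isomorphism class, or equivalently defines $U \gray V$ in $\atom$ as the chosen representative of the isomorphism class of the Gray product computed in $\rdcpx$; the associator and unitors from $\rdcpx$ transport to $\atom$, and their coherence is preserved by the equivalence.

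Next I would treat $\atom^+$ and $\atom^-$. Both are wide subcategories of $\atom$ (so the object-closure is already taken care of), and both contain all isomorphisms, so in particular the components of the associator and unitors belong to them. The remaining point is closure of the two morphism classes under $\gray$. For $\atom^+$, the Gray product of two inclusions is an inclusion by Lemma \ref{lem:gray_preserves_epi_mono}. For $\atom^-$, I would observe that a collapse is, by definition, a surjective cartesian map of atoms: the Gray product of two such maps is cartesian by (the proof of) Proposition \ref{prop:gray_monoidal_rdcpx}, surjective by Lemma \ref{lem:gray_preserves_epi_mono}, and between atoms by the first clause of the same lemma, hence a collapse.

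There is no real obstacle here; the only point demanding a sentence of care is the skeletal transport for $\atom$, which amounts to a standard transfer of monoidal structure along an equivalence of categories.
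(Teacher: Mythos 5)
Your proposal is correct and matches the paper's intent: the corollary is stated without proof precisely because it is the immediate assembly of Proposition \ref{prop:gray_monoidal_rdcpx} and the three clauses of Lemma \ref{lem:gray_preserves_epi_mono}, exactly as you lay out. Your extra care about transporting the structure to the skeleton and about the associators and unitors lying in the wide subcategories (being isomorphisms, hence both inclusions and collapses) is sound and, if anything, more explicit than the paper.
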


\begin{dfn}[Arrow]
	The \emph{arrow} is the atom \( \thearrow{} \eqdef 1 \celto 1 \). 
	We denote by \( 0^- < 1 > 0^+ \) the three elements of its underlying poset, with \( \faces{}{\a} 1 = \set{0^\a} \).
\end{dfn}

\begin{dfn}[Cylinder]
    Let \( U \) be a molecule. 
    The \emph{cylinder} on \( U \) is the molecule \( \thearrow{} \gray U \). It is equipped with a projection \( \sigma \colon \thearrow{} \gray U \surj U \) induced by the unique map \( \varepsilon \colon \thearrow{} \surj \pt \) to the terminal object, and two sections
    \( \iota^+, \iota^- \colon U \incl \thearrow{} \gray U \)
    induced by the two inclusions \( 0^\a\colon \pt \incl \thearrow{} \) for \( \a \in \set{ +, - } \).
\end{dfn}

\noindent If \( P \) is a poset, let \( \augm P \) be the poset obtained by freely adding a least element \( \bot \).
Conversely, if \( P \) is a poset with least element \( \bot \), let \( \dimin P \) be obtained by restricting to \( P \setminus \set{\bot} \).
As detailed in \cite[Section 1.3]{amar_pasting}, these two operations extend to functors that determine an equivalence of categories between
\begin{itemize}
	\item the category of posets and closed order-preserving maps, and
	\item the category of posets with a least element and closed order-preserving maps that reflect the least element.
\end{itemize}

\begin{lem}\label{lem:fibration_closed_augm_dimin}
    Let \( f \colon P \to Q \) be a closed cartesian map of posets. 
    Then
    \begin{enumerate}
	    \item \( \augm f \colon \augm P \to \augm Q \) is cartesian,
		\item if \( P, Q \) have a least element reflected by \( f \), then \( \dimin f \colon \dimin P \to \dimin Q \) is cartesian.
    \end{enumerate}
\end{lem}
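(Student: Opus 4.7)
The plan is to verify the existence of cartesian lifts in each case by a short case analysis, reducing to the cartesian property of \( f \) whenever possible and handling the adjoined bottom element separately.

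For part (1), given \( x \in \augm P \) and \( y \le \augm f(x) \) in \( \augm Q \), I would split into three cases. If \( x = \bot \), then \( \augm f(x) = \bot \), forcing \( y = \bot \), and \( \bot \) is its own cartesian lift. If \( x \in P \) but \( y = \bot \), then \( \bot \in \augm P \) itself is the cartesian lift: any \( z \le x \) with \( \augm f(z) = \bot \) must satisfy \( z = \bot \), since \( \augm f \) only sends \( \bot \) to \( \bot \). If both \( x \in P \) and \( y \in Q \), invoke the cartesian property of \( f \) to obtain \( y' \le x \) in \( P \) with \( f(y') = y \); this \( y' \), viewed in \( \augm P \), remains a cartesian lift, since any \( z \le x \) in \( \augm P \) with \( \augm f(z) \le y \) is either \( \bot \) (hence automatically below \( y' \)) or lies in \( P \), where the universal property of \( y' \) under \( f \) applies directly.

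For part (2), given \( x \in \dimin P \) and \( y \le \dimin f(x) \) in \( \dimin Q \), apply the cartesian property of \( f \) to produce \( y' \le x \) in \( P \) with \( f(y') = y \). Since \( y \neq \bot \), we have \( f(y') \neq \bot \), so the hypothesis that \( f \) reflects the least element ensures \( y' \neq \bot \), i.e.\ \( y' \in \dimin P \). The universal property carries over verbatim: any \( z \le x \) in \( \dimin P \) lies in \( P \), and \( \dimin f(z) \le y \) in \( \dimin Q \) gives \( f(z) \le y \) in \( Q \), whence \( z \le y' \) by the cartesian property of \( f \).

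There is no real obstacle here; the proof is a routine bookkeeping exercise. The only point requiring care is in (2), where the ``reflects the least element'' hypothesis is essential to guarantee that the cartesian lift produced by \( f \) stays inside \( \dimin P \); without it, the lift of a non-bottom element could collapse to \( \bot \) and the restriction would fail to be well-defined, let alone cartesian.
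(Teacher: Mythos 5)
Your proof is correct and follows essentially the same case analysis as the paper's: handle the adjoined bottom element separately and otherwise transport the cartesian lift supplied by \( f \). One small point: in part (2), the fact that the lift \( y' \) is not \( \bot \) follows from \( f(\bot_P) = \bot_Q \) (preservation of the least element, which a closed map between posets with least elements automatically satisfies), not from reflection of the least element as you state --- reflection (\( f(z) = \bot \Rightarrow z = \bot \)) is rather what makes \( \dimin{f} \) well defined in the first place.
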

\begin{proof}
	Let \( x \in \augm{P} \) and let \( y \le \augm{f}(x) \) in \( \augm Q \). 
	If \( y = \bot \), then \( \bot \le x \) is a cartesian lift of \( y \le \augm f(x) \) to \( \augm P \). 	  Else, \( y \) is not \( \bot \), hence \( \augm{f}(x) \) and \( x \) are not \( \bot \) either, and \( \augm{f}(x) = f(x) \).
        Since \( f \) is cartesian, there exists a cartesian lift \( z \le x \) of \( y \le f(x) \) to \( P \). 
	We claim that this lift is again cartesian for \( \augm f \).
	Suppose \( z' \le x \) such that \( \augm f(z') \le y \).
	If \( z' = \bot \), then \( z' \le z \) by construction, otherwise \( \augm f(z') = f(z') \) and \( z' \le z \) by cartesianness of \( f \).

	Next, suppose \( P, Q \) have a least element, let \( x \in \dimin P \), and let \( y \le \dimin{f}(x) \) in \( \dimin Q \).
	Since \( f \) is cartesian, there is a cartesian lift \( z \le x \) of \( y \le f(x) \) to \( P \). 
	Suppose by way of contradiction that \( z = \bot \).
	Then \( f(z) = \bot \), contradicting \( y \in \dimin Q \). 
	Thus \( z \le x \) is in \( \dimin P \), and the cartesian property of this lift for \( \dimin f \) directly follows from the one for \( f \).
\end{proof}

\noindent As shown in \cite[Section 2.3]{amar_pasting}, the pair of functors \( \augm{(-)}, \dimin{(-)} \) can be lifted to an equivalence between the category of oriented graded posets and its full subcategory on oriented graded posets with a \emph{positive} least element, that is, a least element \( \bot \) with \( \cofaces{}{}\bot \equiv \cofaces{}{+}\bot \).
The latter are closed under Gray products, and transporting this monoidal structure along the equivalence determines a separate monoidal structure on \( \ogpos \).

\begin{dfn}[Join]
	Let \( P, Q \) be oriented graded posets.
	The \emph{join} \( P \join Q \) of \( P \) and \( Q \) is the oriented graded poset \( \dimin{(\augm P \gray \augm Q)} \).
\end{dfn}

\begin{prop}\label{prop:join_monoidal}
    The join determines a monoidal structure on \( \rdcpx \), whose unit is the empty regular directed complex \( \varnothing \).
\end{prop}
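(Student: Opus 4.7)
The plan is to mirror the proof of Proposition \ref{prop:gray_monoidal_rdcpx}: the monoidal structure $(\ogpos, \join, \varnothing)$ on oriented graded posets is already established in \cite[Section 2.3]{amar_pasting} by transporting the monoidal structure $(\gray, \pt)$ along the equivalence between $\ogpos$ and its full subcategory of oriented graded posets with a positive least element; that the join of two regular directed complexes is a regular directed complex is part of the development in \cite[Chapter 7]{amar_pasting}. Hence the only genuinely new content here is the verification that the join of two cartesian maps is cartesian, from which the rest of the monoidal data transports immediately.

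Given cartesian $f \colon P \to P'$ and $g \colon Q \to Q'$ in $\rdcpx$, I would proceed in three steps. First, apply Lemma \ref{lem:fibration_closed_augm_dimin}(1) to $f$ and $g$ to conclude that $\augm f$ and $\augm g$ are cartesian. Second, as in the proof of Proposition \ref{prop:gray_monoidal_rdcpx}, use closure of Grothendieck fibrations under cartesian products to conclude that $\augm f \gray \augm g$ is cartesian. Third, apply Lemma \ref{lem:fibration_closed_augm_dimin}(2) to obtain that $f \join g \eqdef \dimin{(\augm f \gray \augm g)}$ is cartesian. The third step requires checking that $\augm P \gray \augm Q$ has a least element reflected by $\augm f \gray \augm g$: this least element is $(\bot, \bot)$, and any preimage under $\augm f \gray \augm g$ must have both coordinates equal to $\bot$, since $\augm f$ and $\augm g$ each reflect $\bot$ by construction.

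For the unit axiom, $\augm \varnothing = \pt$ corresponds to the $\gray$-unit under the equivalence, so $\varnothing \join P = \dimin{(\pt \gray \augm P)} \cong \dimin{\augm P} = P$ naturally in $P$, and symmetrically on the right. The associator together with the coherence pentagons and triangles transport along the equivalence $\augm{(-)} \dashv \dimin{(-)}$ from those of $\gray$. I do not expect a serious obstacle: the bulk of the work is already contained in \cite{amar_pasting}, and the only new ingredient is the preservation of cartesianness, which is handled cleanly by Lemma \ref{lem:fibration_closed_augm_dimin}.
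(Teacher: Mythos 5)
Your proposal is correct and follows essentially the same route as the paper: reduce to the statement that the join of two cartesian maps is cartesian (the monoidal structure itself being imported from \cite[Proposition 7.4.22]{amar_pasting}), and then deduce this from closure of cartesian maps under products together with Lemma \ref{lem:fibration_closed_augm_dimin}. The extra details you supply --- the explicit check that \( (\bot,\bot) \) is reflected, and the transport of the unit and coherence data --- are correct but are left implicit in the paper's one-line argument.
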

\begin{proof}
    Since the join defines a monoidal structure on \( \rdcpxmap \) by \cite[Proposition 7.4.22]{amar_pasting}, it suffices to show that the join of two cartesian maps is cartesian.
    Since the underlying map of \( f \join g \) is \( \dimin {(\augm f \times \augm g)} \) and cartesian maps are closed under products, this follows from Lemma \ref{lem:fibration_closed_augm_dimin}. 
\end{proof}

\noindent
It follows that the embedding of the simplex category \( \simplexcat \) into \( \rdcpxmap \) defined in \cite[Proposition 9.2.14]{amar_pasting} factors through \( \rdcpx \).
In particular, we may identify \( \simplexcat \) with a full subcategory of \( \atom \).

\section{Diagrammatic sets} \label{sec:diagrammatic}

\begin{dfn}[Diagrammatic set]
    A \emph{diagrammatic set} is a presheaf on \( \atom \).
    Diagrammatic sets and natural transformations form the category \( \atom\Set \).
\end{dfn}

\noindent Given an atom \( U \) and a diagrammatic set \( X \), a \emph{cell of shape \( U \) in \( X \)} is an element \( x \in X(U) \). 
We will identify atoms with their Yoneda embedding, and make no distinction between a cell \( x \in X(U) \) and its classifying morphism \( x \colon U \to X \).

\begin{dfn}[Non-degenerate cell]
	Let \( X \) be a diagrammatic set and \( x \colon U \to X \) a cell.
	We say that \( x \) is \emph{non-degenerate} if, for all collapses \( p\colon U \surj V \) and cells \( y\colon V \to X \), if \( x = yp \) then \( p = \idd{U} \) and \( y = x \).
	We say that \( x \) is \emph{degenerate} otherwise.
	We write \( \Nd X \) for the set of all non-degenerate cells of \( X \).
\end{dfn}

\begin{lem}\label{lem:ez_lemma}
    Let \( X \) be a diagrammatic set, and let \( x \colon U \to X \) be a cell. 
    Then there exists a unique pair \( (p \colon U \surj V, y \colon V \to X) \) such that
    \( p \) is a collapse, \( y \) is non-degenerate, and \( x = yp \).
\end{lem}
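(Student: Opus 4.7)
This is the Eilenberg--Zilber lemma specialised to the category \( \atom\Set \) of presheaves on the regular Eilenberg--Zilber category \( \atom \) (Proposition~\ref{prop:atom_ez_category}), and my plan is to derive it purely from the four axioms \ref{enum:ez0}--\ref{enum:ez3} together with the rigidity of atoms (Remark~\ref{rmk:no_nontrivial_auto}), with no further input from the combinatorics of regular directed complexes.

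For existence, I would argue by induction on \( \dim U \). If \( x \) is non-degenerate, take \( (\idd{U}, x) \). Otherwise, by definition of non-degeneracy, there is a non-identity collapse \( p' \colon U \surj U' \) and a cell \( x' \colon U' \to X \) with \( x = x' p' \); by \ref{enum:ez1}, \( \dim U' < \dim U \), so the inductive hypothesis applied to \( x' \) produces a factorisation \( x' = y q \) with \( q \) a collapse and \( y \) non-degenerate. Since collapses form the subcategory \( \atom^- \), the composite \( q p' \) is again a collapse, and \( (qp', y) \) is the desired factorisation.

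For uniqueness, suppose \( x = y_1 p_1 = y_2 p_2 \) with \( p_i \colon U \surj V_i \) collapses and each \( y_i \) non-degenerate. By \ref{enum:ez3}, pick a section \( s \colon V_1 \to U \) of \( p_1 \); then
\[
y_1 = y_1 p_1 s = x s = y_2 (p_2 s).
\]
Factor \( p_2 s = i \circ q \) by \ref{enum:ez2} with \( q \) a collapse and \( i \) an inclusion. Then \( y_1 = (y_2 i) \circ q \), and the non-degeneracy of \( y_1 \) forces \( q = \idd{V_1} \) and \( y_2 i = y_1 \); in particular \( p_2 s = i \colon V_1 \incl V_2 \) is an inclusion. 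The symmetric argument, starting from a section \( t \) of \( p_2 \), produces an inclusion \( j \colon V_2 \incl V_1 \) with \( y_1 j = y_2 \). The composite \( ji \) is a self-inclusion of \( V_1 \); by \ref{enum:ez1}, this can only be the identity, and similarly \( ij = \idd{V_2} \). Hence \( V_1 \) and \( V_2 \) are isomorphic, so equal by skeletality of \( \atom \), and \( i = j = \idd{V_1} \) by rigidity. Consequently \( p_2 s = \idd{V_1} \), i.e., every section of \( p_1 \) is a section of \( p_2 \); by symmetry the converse holds, so \( p_1 \) and \( p_2 \) have the same set of sections, and \ref{enum:ez3} yields \( p_1 = p_2 \). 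Finally \( y_1 = y_2 i = y_2 \).

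The only step with any real content is the identification of \( V_1 \) with \( V_2 \): from the two applications of non-degeneracy one ends up in a Cantor--Bernstein-type situation with mutual inclusions, and the clean outcome relies on three distinct features of \( \atom \) converging at once---the strict dimension strictness of \ref{enum:ez1} for non-identity inclusions, skeletality, and the rigidity of atoms. Everything else is bookkeeping around the factorisation system of Proposition~\ref{prop:epi_mono_factorisation}.
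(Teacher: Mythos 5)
Your proof is correct. The paper's own proof of this lemma is a one-line citation to the general Eilenberg--Zilber lemma for presheaves on an Eilenberg--Zilber category (Cisinski, \emph{Higher Categories and Homotopical Algebra}, Lemma 1.3.6), and your argument is essentially a faithful reconstruction of that standard proof from the axioms (EZ0)--(EZ3) together with skeletality and rigidity, so in substance it takes the same route.
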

\begin{proof}
    This is a general result about Eilenberg--Zilber categories, see for instance \cite[Lemma 1.3.6]{cisinski_higher_2019}.
\end{proof}
\noindent We will call \( x = yp \) the \emph{Eilenberg--Zilber decomposition} of the cell \( x \).

Let \( i \colon \atom \incl \rdcpx \) be the inclusion of \( \atom \) as a full subcategory.
Each regular directed complex \( P \) can be seen as a presheaf on \( \atom\Set \) via the functor 
\begin{equation*}
    i^* \colon \rdcpx \to \atom\Set, \quad \quad P \mapsto \rdcpx(i(-), P).
\end{equation*}
We will show that this functor embeds \( \rdcpx \) as a full subcategory of \( \atom\Set \), that is, the functor \( i \) is dense.
Furthermore, by a variant of \cite[Lemma 6.2.25]{amar_pasting}, a regular directed complex \( P \) is the colimit of the diagram of inclusions of its atoms in the category \( \rdcpx \); we will show that this colimit is preserved by \( i^* \). 
Finally, we will characterise the essential image of \( i^* \) as the ``regular presheaves'' of \( \atom\Set \).
In the sequel, this will allow us to make no distinction between a regular directed complex \( P \) and its image \( i^*P \).

\begin{lem}\label{lem:nondeg_in_rdcpx}
	Let \( P \) be a regular directed complex and \( f\colon U \to P \) a cell in \( i^* P \).
	Then \( f \) is non-degenerate as a cell if and only if it is an inclusion.
\end{lem}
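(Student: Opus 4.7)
The plan is to reduce the statement to the orthogonal factorisation system of Proposition \ref{prop:epi_mono_factorisation}. Given any cell \( f \colon U \to P \) with \( U \) an atom, I would first observe that the image \( f(U) \) coincides with \( \clset{f(\top_U)} \), the closure of a single element in a regular directed complex, and hence is itself an atom. Consequently, the epi-mono factorisation \( f = i \circ \hat f \) produced by Proposition \ref{prop:epi_mono_factorisation} consists of a collapse \( \hat f \colon U \surj f(U) \) in \( \atom \) followed by an inclusion \( i \colon f(U) \incl P \). Both directions will follow from this factorisation together with its uniqueness.

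For the ``only if'' direction, if \( f \) is non-degenerate, I would apply the non-degeneracy condition directly to the decomposition \( f = i \circ \hat f \): because \( \hat f \) is a collapse, we must have \( \hat f = \idd{U} \) and \( i = f \), so \( f \) itself is an inclusion.

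For the ``if'' direction, suppose \( f \) is an inclusion, and let \( f = y \circ p \) be any factorisation with \( p \colon U \surj V \) a collapse and \( y \colon V \to P \) a cell. I would then factor \( y = y_i \circ y_p \) by Proposition \ref{prop:epi_mono_factorisation}. Since surjective cartesian maps are closed under composition (surjections compose and Grothendieck fibrations compose), \( y_p \circ p \) is again a collapse, so \( f = y_i \circ (y_p \circ p) \) is another epi-mono factorisation of \( f \). Comparing with the trivial factorisation \( f = f \circ \idd{U} \) via uniqueness, and invoking the skeletality of \( \atom \) together with the rigidity of atoms recorded in Remark \ref{rmk:no_nontrivial_auto}, this forces \( y_p \circ p = \idd{U} \); then \( p \) is a collapse that is also an isomorphism of \( U \), hence \( p = \idd{U} \), and therefore \( y = y_i = f \), proving non-degeneracy.

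The only subtlety will be upgrading the ``up to unique isomorphism'' uniqueness of Proposition \ref{prop:epi_mono_factorisation} to strict equality \( p = \idd{U} \), rather than merely that \( p \) is an isomorphism; this is not a real obstacle, given that \( \atom \) is skeletal and atoms have no non-trivial automorphisms, but it is the one place where a careful argument is required.
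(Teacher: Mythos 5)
Your proposal is correct and follows essentially the same route as the paper, which proves this lemma in one line by citing Proposition \ref{prop:epi_mono_factorisation} together with the absence of non-trivial automorphisms of atoms (Remark \ref{rmk:no_nontrivial_auto}); your write-up simply makes explicit the comparison of the two epi--mono factorisations and the upgrade from ``isomorphism'' to ``identity'' via skeletality and rigidity. No gaps.
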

\begin{proof}
	Follows from Proposition \ref{prop:epi_mono_factorisation} combined with the non-existence of non-trivial isomorphisms in \( \atom \).
\end{proof}

\begin{lem}\label{lem:rdcpx_to_dset_ff}
   The functor \( i^* \colon \rdcpx \to \atom\Set \) is full and faithful.
\end{lem}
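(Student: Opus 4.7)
The plan is to exploit the fact that every element $p \in P$ of a regular directed complex determines an atom $\clset p$ together with a canonical inclusion $\iota_p \colon \clset p \incl P$, which is in particular a cartesian map, and that $P$ is set-theoretically covered by the images of these inclusions. The functor $i^*$ sends $P$ to the presheaf $U \mapsto \rdcpx(U, P)$, and faithfulness is then immediate: if $f, g \colon P \to Q$ satisfy $i^*(f) = i^*(g)$, then $f \circ \iota_p = g \circ \iota_p$ for every $p \in P$, and evaluating at the greatest element of $\clset p$ gives $f(p) = g(p)$; since cartesian maps are determined by their underlying functions on sets, $f = g$.

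For fullness, fix a natural transformation $\phi \colon i^*P \to i^*Q$ and define a function $f \colon P \to Q$ by $f(p) \eqdef \phi_{\clset p}(\iota_p)(p)$. The essential compatibility is that for $q \le p$ in $P$ the inclusion $j \colon \clset q \incl \clset p$ satisfies $\iota_p \after j = \iota_q$, so naturality of $\phi$ at $j$ yields
\[
	\phi_{\clset q}(\iota_q) = \phi_{\clset p}(\iota_p) \after j.
\]
Evaluating at $q$ gives $f(q) = \phi_{\clset p}(\iota_p)(q)$, which shows that $\restr{f}{\clset p} = \phi_{\clset p}(\iota_p)$ as functions $\clset p \to Q$. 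Since each $\phi_{\clset p}(\iota_p)$ is by hypothesis a cartesian map in $\rdcpx$, and since the defining conditions of a cartesian map of regular directed complexes (closed and order-preserving, compatibility with each boundary operator $\bd^\a_n$, finality of the induced surjections onto boundaries, and the Grothendieck fibration property) are all local to the closures of individual elements of the source, it follows that $f$ itself is a cartesian map in $\rdcpx$.

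It remains to check $i^*(f) = \phi$. Given an atom $U$ and a cell $x \colon U \to P$, let $q \eqdef x(\top_U)$; since $x$ is order-preserving, $x$ factors as $\iota_q \after \tilde x$ for some cartesian map $\tilde x \colon U \to \clset q$ (cartesianness is preserved by restricting the codomain to a closed subset containing the image). Naturality of $\phi$ at $\tilde x$ then gives $\phi_U(x) = \phi_{\clset q}(\iota_q) \after \tilde x$, which by construction of $f$ equals $f \after \iota_q \after \tilde x = f \after x$, as required. The only mildly delicate point in this plan is the locality argument in the second paragraph, where one needs to observe that each clause in the definition of a cartesian map of regular directed complexes refers only to $\clset x$ for some $x$ in the source, so that the local agreements $\restr{f}{\clset p} = \phi_{\clset p}(\iota_p)$ assemble into global properties of $f$; everything else is a direct unwinding of the Yoneda-style description of $i^*$.
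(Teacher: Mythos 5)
Your proof is correct and follows essentially the same route as the paper's: both define the candidate map by evaluating the natural transformation on the canonical inclusions \( \iota_p \colon \clset{p} \incl P \) at the top element, use naturality along inclusions \( \clset{q} \incl \clset{p} \) to get local agreement and hence cartesianness, and then verify \( i^*f = \phi \) on an arbitrary cell by factoring it through its image (your factorisation \( x = \iota_q \after \tilde{x} \) is exactly the surjection--inclusion factorisation the paper invokes via the Eilenberg--Zilber decomposition). The locality argument you flag as delicate is indeed the right thing to check, and it goes through because every clause in the definition of a cartesian map refers only to the lower set of an element of the source.
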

\begin{proof}
	Let \( P \) be a regular directed complex.
	For each \( x \in P \), we denote by \( \iota_x \colon U_x \incl P \) the unique inclusion with image \( \clset{x} \) for some object \( U_x \) in \( \atom \).
	Let \( f, g \colon P \to Q \) be two maps in \( \rdcpx \), and suppose that \( i^*f = i^*g \).
	Then, for all \( x \in P \), we have \( i^*f(\iota_x) = i^*g(\iota_x) \), which implies that \( \restr f {\clset{x}} = \restr g {\clset{x}} \), hence \( f = g \).
	This proves faithfulness.

    	Next, let \( f \colon i^*P \to i^*Q \) be a morphism in \( \atom\Set \). 
	We define \( \hat{f} \colon P \to Q \) by \( x \in P \mapsto f(\iota_x)(\top_{U_x}) \in Q \). 
	Let \( x \in P \); we prove that \( f(\iota_x) = \hat{f} \iota_x \).
	Let \( y \in U_x \) and \( y' \eqdef \iota_x(y) \in P \).
	By naturality of \( f \), we have 
	\[ f(\iota_x)(y) = f(\iota_{y'})(\top_{U_{y'}}) = \hat{f}(y') = \hat{f} \iota_x (y). \]
	Since \( f(\iota_x) \) and \( \iota_x \) are cartesian maps of regular directed complexes, it follows that \( \restr{\hat{f}}{\clset{x}} \) is a cartesian map, and since \( x \) is arbitrary, \( \hat{f} \) is well-defined as a cartesian map \( P \to Q \).

	Let \( x \colon U \to P \) be any cell in \( i^*P \).
	Factorising \( x \) as \( yp \) for a unique pair of a collapse \( p\colon U \surj V \) and a non-degenerate cell \( y\colon V \to P \) in \( i^*P \), by Lemma \ref{lem:nondeg_in_rdcpx} \( y \) is an inclusion in \( \rdcpx \), so it is equal to \( \iota_{y'} \) for a unique \( y' \in P \).
	Then
    	\begin{equation*}
		f(x) = f(\iota_{y'}) p = (\hat{f} \iota_{y'}) p = i^*\hat{f}(\iota_{y'}) p = i^*\hat{f}(x),
    	\end{equation*}
	proving that \( f = i^* \hat{f} \).
	This proves fullness.
\end{proof}

\begin{prop}\label{prop:colim_inclusions_preserved_by_i^*}
	Let \( \fun{F} \colon \C \to \rdcpx \) be a diagram of inclusions of regular directed complexes, and let \( \gamma \) be a colimit cone under \( \fun{F} \) in \( \rdcpx \) whose components are all inclusions. 
	Then \( i^* \gamma \) is a colimit cone under \( i^* \fun{F} \) in \( \atom\Set \).
\end{prop}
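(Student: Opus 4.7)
The strategy is to check the universal property pointwise: since $\atom\Set$ is a presheaf category, it suffices to show that for every atom $U$ the comparison map $\colim_{c \in \C} \rdcpx(U, \fun{F}(c)) \to \rdcpx(U, P)$ induced by postcomposition with the $\gamma_c$'s is a bijection.

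The argument turns on a preliminary structural fact: the underlying set of $P$, modulo the equivalence relation identifying elements of $\bigsqcup_c \fun{F}(c)$ with the same image in $P$, realises the set-level colimit of the underlying sets of the $\fun{F}(c)$. Surjectivity of this description follows by universality: the closed subcomplex $P' \eqdef \bigcup_c \gamma_c(\fun{F}(c))$ of $P$ receives the cocone $\gamma$, and an appeal to uniqueness of the induced factorisation forces the inclusion $P' \incl P$ to be a retraction of the induced map $P \to P'$, whence $P' = P$. Identifying the equivalence relation is subtler: I would construct the set-theoretic colimit $S$ explicitly, endow it with the transported oriented graded poset structure, check that its downsets remain atoms (using that the $\fun{F}(f)$'s are inclusions, hence preserve downsets isomorphically), and then invoke the universality of $P$ twice to produce an isomorphism $S \cong P$.

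For surjectivity of the comparison map, given a cell $x \colon U \to P$, apply the Eilenberg--Zilber decomposition (Lemma \ref{lem:ez_lemma}) to write $x = \iota_{y'} \circ p$, with $p \colon U \surj V$ a collapse and, by Lemma \ref{lem:nondeg_in_rdcpx}, $\iota_{y'} \colon V \incl P$ the closed inclusion of $\clset{y'}$. The structural fact provides $y' = \gamma_c(z)$ for some $c \in \C$ and $z \in \fun{F}(c)$; since $\gamma_c$ restricts to a bijection $\clset z \to \clset{y'}$ which by rigidity of atoms (Remark \ref{rmk:no_nontrivial_auto}) must coincide with $\gamma_c \circ \iota_z$, we obtain $x = \gamma_c \circ (\iota_z \circ p)$. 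For injectivity, if $\gamma_c \tilde x = \gamma_{c'} \tilde x'$, EZ-decomposing both and using closure of inclusions under composition together with uniqueness of EZ forces a shared collapse $p$ and the identity $\gamma_c(z) = \gamma_{c'}(z')$; the structural fact then yields a zigzag in $\C$ connecting $z$ to $z'$, which by rigidity lifts to a zigzag of inclusions of $V$ into the $\fun{F}(c_i)$'s, and precomposition with $p$ identifies $\tilde x = \iota_z p$ with $\tilde x' = \iota_{z'} p$ in the colimit.

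The main obstacle is the ``relations'' half of the structural fact, which is not a one-line consequence of universality. I expect to need an explicit construction of the set-level colimit $S$ equipped with a coherent RDC structure, checking that the induced maps $\fun{F}(c) \to S$ are indeed inclusions (which requires observing that, were they to fail injectivity, two distinct elements of some $\fun{F}(c)$ would be identified in $P$ by $\gamma_c$, contradicting that $\gamma_c$ is an inclusion). Once this has been settled, the remaining reasoning is a routine exercise in Eilenberg--Zilber decompositions and rigidity.
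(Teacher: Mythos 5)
Your proposal is correct and follows essentially the same route as the paper: check the universal property objectwise on each atom \( U \), prove surjectivity by factoring a cell through some \( \gamma_c \) (the paper uses the surjection--inclusion factorisation of Proposition \ref{prop:epi_mono_factorisation} where you use the Eilenberg--Zilber decomposition, which amounts to the same thing here), and prove injectivity by lifting a zig-zag between the images of the top elements to a zig-zag of maps under \( U \), using the isomorphisms of downsets induced by the injective \( \fun{F}u_i \) together with rigidity. The ``structural fact'' you single out as the main obstacle is exactly what the paper dispatches by appealing to the known construction of colimits of inclusions in \( \rdcpx \) from the cited reference, so there is no need to rebuild the set-level colimit \( S \) and its oriented graded poset structure by hand.
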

\begin{proof}
	Let \( P \) be the colimit of \( \fun{F} \) in \( \rdcpx \), and let 
    \begin{equation*}
	    \phi \colon \colim_c \rdcpx(i(-), \fun{F}c) \to \rdcpx(i(-), P)
    \end{equation*}
    be the universal morphism determined by the cone \( i^*\gamma \). 
    We will show that \( \phi \) is an isomorphism. 
    Let \( U \) be an atom, let 
    \begin{equation*}
        f, g \in \colim_c \rdcpx(i(U), \fun{F}c)
    \end{equation*}
    be represented respectively by \( f \colon U \to \fun{F}c \) and \( g \colon U \to \fun{F}c' \), and suppose \( \phi(f) = \phi(g) \).
    Then \( \gamma_c f(\top_U) = \gamma_{c'} g(\top_U) \) in \( P \), so by the construction of \( P \) as a colimit, there exists a zig-zag of morphisms in \( \C \)
    \begin{equation*}
	    c \stackrel{u_0}{\longleftarrow} c_1 \stackrel{u_1}{\longrightarrow} \ldots \stackrel{u_{n - 2}}{\longleftarrow} c_{n - 1} \stackrel{u_{n-1}}{\longrightarrow} c'
    \end{equation*}
    and a sequence \( (x_i \in \fun{F}c_i)_{0 \le i \le n} \) such that \( x_0 = f(\top_U) \), \( x_n = g(\top_U) \), and for all \( k \in \set{ 1, \ldots, \frac n 2} \),
    \begin{equation*}
        x_{2k - 2} = \fun{F}u_{2k - 2}(x_{2k - 1}), \quad \quad x_{2k} = \fun{F}u_{2k - 1}(x_{2k - 1});
    \end{equation*}
    note that the equations are only required to hold for the images via \( \gamma_{c_i} \), but we can lift those to \( \fun{F}c_i \) because the \( \gamma_{c_i} \) are injective. 
    Because all the \( \fun{F}u_i \) are injective, they induce isomorphisms \( \theta_i \colon \clset{x_i} \cong \clset{x_{i + 1}} \). 
    Let \( \iota_c\hat f \) and \( \iota_{c'} \hat g \) be factorisations of \( f \) and \( g \) as surjective maps followed by inclusions, and consider the diagram
    \[
	    \begin{tikzcd}
            && U \\
		{\clset{f(\top_U)}} & {\clset{x_1 }} & \ldots & {\clset{x_{n-1}}} & {\clset{g(\top_U)}} \\
            \fun{F}c & {\fun{F}c_1} & \ldots & {\fun{F}c_{n - 1}} & {\fun{F}c'} \\
            && P
            \arrow["{{\hat f}}"', two heads, from=1-3, to=2-1]
            \arrow["{h_1}", dashed, two heads, from=1-3, to=2-2]
            \arrow["{h_{n-1}}"', dashed, two heads, from=1-3, to=2-4]
            \arrow["{{\hat g}}", two heads, from=1-3, to=2-5]
            \arrow["\cong"{description}, draw=none, from=2-1, to=2-2]
            \arrow["{{\iota_c}}", hook', from=2-1, to=3-1]
            \arrow["\cong"{description}, draw=none, from=2-2, to=2-3]
            \arrow["{\iota_1}", hook', from=2-2, to=3-2]
            \arrow["\cong"{description}, draw=none, from=2-3, to=2-4]
            \arrow["\cong"{description}, draw=none, from=2-4, to=2-5]
            \arrow["{\iota_{n-1}}", hook', from=2-4, to=3-4]
            \arrow["{\iota_{c'}}", hook', from=2-5, to=3-5]
            \arrow["{\gamma_c}"', hook, from=3-1, to=4-3]
            \arrow["{\fun{F}u_0}"', hook', from=3-2, to=3-1]
            \arrow["{\fun{F}u_1}", hook, from=3-2, to=3-3]
	    \arrow["{\gamma_{c_{1}}}", hook, from=3-2, to=4-3]
            \arrow["{\fun{F}u_{n -1}}"', hook', from=3-4, to=3-3]
            \arrow["{\fun{F}u_n}", hook, from=3-4, to=3-5]
            \arrow["{\gamma_{c_{n-1}}}"', hook', from=3-4, to=4-3]
            \arrow["{\gamma_{c'}}", hook', from=3-5, to=4-3]
        \end{tikzcd}
\]
    where \( h_i \) is defined inductively by \( h_1 \eqdef \theta_0 \hat f \) and \( h_i \eqdef \theta_{i - 1} h_{i - 1} \) for \( i > 1 \). 
    The only equation that does not hold by construction or by assumption is \( \theta_{n-1} h_{n-1} = \hat g \), but this follows from
    \[
	    \gamma_{c'} \iota_{c'} \hat g = \gamma_{c} \iota_c \hat{f} = \gamma_{c'} \iota_{c'} \theta_{n-1} h_{n-1}
    \]
    and the fact that \( \gamma_{c'} \iota_{c'} \) is injective.
    This exhibits a zig-zag of morphisms between \( f \) and \( g \) under \( U \), proving that \( f = g \) in \( \colim_c \rdcpx(i(-), \fun{F}c) \). 
	This proves that \( \phi \) is injective on cells.

	Let \( f \colon U \to P \) in \( \rdcpx(i(U), P) \).
	Then \( f(\top_U) = \gamma_c(x) \) for some \( x \in \fun{F}c \), and since \( \gamma_c \) is an inclusion, \( f \) factors as \( \gamma_c \hat{f} \) for some \( \hat{f}\colon U \surj \fun{F}c \).
	It follows that \( f = \phi(\hat{f}) \), so \( \phi \) is also surjective on cells.
\end{proof}

\begin{cor}\label{cor:rdcpx_colim_its_atom}
    Let \( P \) be a regular directed complex.
    Then \( i^*P \) is the colimit of the diagram of inclusions
    \begin{equation*}
	    P \to \atom\Set,  \quad \quad (x \le y) \mapsto (i^*\clset{x} \incl i^*\clset{y}).
    \end{equation*}
\end{cor}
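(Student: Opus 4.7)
The plan is to present this as a direct application of Proposition \ref{prop:colim_inclusions_preserved_by_i^*} to a specific diagram of inclusions in \( \rdcpx \), combined with the fact mentioned earlier in the text (a variant of \cite[Lemma 6.2.25]{amar_pasting}) that every regular directed complex is the colimit in \( \rdcpx \) of the diagram of inclusions of its atoms.

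Concretely, first I would set up the diagram \( \fun{F} \colon P \to \rdcpx \) sending each \( x \in P \) to \( \clset{x} \) (viewed as a regular directed complex via the correspondence between elements of \( P \) and atoms \( U_x \) used in the proof of Lemma \ref{lem:rdcpx_to_dset_ff}), and each relation \( x \le y \) to the canonical inclusion \( \clset{x} \incl \clset{y} \). The assignments \( x \mapsto \iota_x \colon \clset{x} \incl P \) assemble into a cone under \( \fun{F} \) in \( \rdcpx \) whose components are all inclusions.

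Second, I would invoke the variant of \cite[Lemma 6.2.25]{amar_pasting} mentioned in the paragraph preceding Lemma \ref{lem:nondeg_in_rdcpx}, which guarantees that this cone exhibits \( P \) as the colimit of \( \fun{F} \) in \( \rdcpx \). Finally, applying Proposition \ref{prop:colim_inclusions_preserved_by_i^*} to \( \fun{F} \) (whose hypothesis that \( \fun{F} \) is a diagram of inclusions whose colimit cone consists of inclusions is satisfied by construction) transports this colimit along \( i^* \), yielding that \( i^*P \), equipped with the cone \( i^*\iota_x \colon i^*\clset{x} \incl i^*P \), is the colimit of \( i^* \fun{F} \) in \( \atom\Set \). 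This is precisely the statement of the corollary.

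There is no real obstacle here, since all the work has already been done in Proposition \ref{prop:colim_inclusions_preserved_by_i^*}; the only subtlety is matching the diagram \( x \mapsto i^*\clset{x} \) with the composite \( i^* \after \fun{F} \), which follows tautologically from the definition of \( \fun{F} \).
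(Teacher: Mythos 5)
Your proposal is correct and matches the paper's proof, which likewise combines the fact that \( P \) is the colimit in \( \rdcpx \) of the diagram of inclusions of its atoms (cited there as \cite[Proposition 2.2.22]{amar_pasting}) with Proposition \ref{prop:colim_inclusions_preserved_by_i^*}. The extra detail you supply about assembling the cone \( x \mapsto \iota_x \) is just an unpacking of that citation.
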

\begin{proof}
	Follows from \cite[Proposition 2.2.22]{amar_pasting} and Proposition \ref{prop:colim_inclusions_preserved_by_i^*}.
\end{proof}

\begin{cor}\label{cor:gray_product_preserved}
	The functor \( i^*\colon (\rdcpx, \gray, \pt) \incl (\atom\Set, \gray, \pt) \) is strong monoidal.
\end{cor}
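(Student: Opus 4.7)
The plan is to reduce the claim to the already-established monoidality of \( \gray \) on \( \atom \) (Corollary \ref{cor:monoidal_on_atom}) by expressing both sides as canonical colimits of atoms. The Gray product on \( \atom\Set \) is the Day convolution extension of \( (\atom, \gray, \pt) \) (as forward-referenced in the introduction); for such a tensor product, the Yoneda embedding is automatically strong monoidal, and the tensor is cocontinuous in each variable. So, for atoms \( U, V \), one has \( i^*U \gray i^*V \cong i^*(U \gray V) \) and the unit of \( \atom\Set \) is \( i^*\pt \) on the nose; this settles the restriction to \( \atom \).

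To extend to \( \rdcpx \), I would first observe that for \( P, Q \in \rdcpx \), the underlying set of \( P \gray Q \) is \( P \times Q \) with the product order, and that the orientation formula \( \faces{}{\a}(x,y) = \faces{}{\a}x \times \set{y} + \set{x} \times \faces{}{(-)^{\dim x}\a}y \) is local in \( (x, y) \): it follows that, for each \( (x, y) \in P \gray Q \), the closure \( \clset{(x, y)} \) coincides with \( \clset{x} \gray \clset{y} \) as an oriented graded poset. Combined with Corollary \ref{cor:rdcpx_colim_its_atom}, this gives
\[
    i^*(P \gray Q) \;\cong\; \colim_{(x, y) \in P \times Q} i^*\!\left( \clset{x} \gray \clset{y} \right),
\]
where the colimit is taken over the diagram of inclusions \( i^*(\clset{x} \gray \clset{y}) \incl i^*(\clset{x'} \gray \clset{y'}) \) induced by \( (x, y) \le (x', y') \).

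On the other hand, applying Corollary \ref{cor:rdcpx_colim_its_atom} to \( P \) and \( Q \) separately, then using cocontinuity of Day convolution in each variable and the monoidality on representables, yields
\[
    i^*P \gray i^*Q \;\cong\; \colim_{x \in P} \colim_{y \in Q} \left( i^*\clset{x} \gray i^*\clset{y} \right) \;\cong\; \colim_{(x, y) \in P \times Q} i^*\!\left( \clset{x} \gray \clset{y} \right),
\]
which agrees with the previous colimit. The comparison map \( i^*P \gray i^*Q \to i^*(P \gray Q) \) is therefore an isomorphism, natural in \( P \) and \( Q \). The coherence data (associator and unitors) transport uniquely along the fully faithful embedding \( i^* \) (Lemma \ref{lem:rdcpx_to_dset_ff}) from the monoidal structure on \( \rdcpx \) (Proposition \ref{prop:gray_monoidal_rdcpx}).

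The main obstacle is the bookkeeping identification \( \clset{(x,y)} = \clset{x} \gray \clset{y} \) \emph{as oriented graded posets}, which requires unpacking the inductive orientation formula and checking that the two diagrams of atoms — the one indexing \( i^*(P \gray Q) \) versus the Fubini-style double colimit used to compute \( i^*P \gray i^*Q \) — produce identical transition maps. Everything else reduces to standard facts about Day convolution and the colimit description of regular directed complexes already in hand.
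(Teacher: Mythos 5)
Your proposal is correct and follows essentially the same route as the paper: the paper's proof is a one-line citation of a variant of \cite[Lemma 7.2.8]{amar_pasting} (which supplies the identification \( \clset{(x,y)} \cong \clset{x} \gray \clset{y} \) you spell out), combined with Corollary \ref{cor:rdcpx_colim_its_atom} and strong monoidality of the Yoneda embedding under Day convolution. Your write-up simply makes explicit the colimit comparison that the paper leaves implicit.
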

\begin{proof}
	Follows from a variant of \cite[Lemma 7.2.8]{amar_pasting}, combined with the fact that each regular directed complex is the colimit of the diagram of inclusions of its atoms, and that the Yoneda embedding is strong monoidal.
\end{proof}

\begin{rmk}\label{rmk:notation_boundary}
    If \( \C \) is an Eilenberg--Zilber category and \( c \) is an object of \( \C \), one usually defines \( \bd c \) to be the subpresheaf of \( c \) generated by the morphisms \( c' \to c \) in \( \C^+ \) with \( d(c') < d(c) \). 
	If \( U \) is an atom, it follows from Corollary \ref{cor:rdcpx_colim_its_atom} that the subpresheaf \( \bd U \) of its Yoneda embedding is isomorphic to the image \( i^*\bd U \) of the regular directed complex \( \bd U \).
    
	In particular, by \cite[Theorem 1.3.8]{cisinski_higher_2019}, the inclusions \( \bd_U \colon \bd U \incl U \) for \( U \) an atom form a \emph{cellular model} \cite[\S 2.4.4]{cisinski_higher_2019} of \( \atom\Set \), in the sense that each monomorphism \( m \colon X \incl Y \) in \( \atom\Set \) is built out of a transfinite composition of pushouts of the \( \bd_U \).
\end{rmk}

\noindent We conclude this section by characterising the image of \( i^*\colon \rdcpx \to \atom\Set \).

\begin{dfn}[Regular diagrammatic set]
    A diagrammatic set \( X \) is \emph{regular} if all non-degenerate cells \( x \colon U \to X \) are monomorphisms.  
\end{dfn}

\noindent 
By Lemma \ref{lem:nondeg_in_rdcpx}, all regular directed complexes are regular as diagrammatic sets.
The rest of this section is devoted to showing that the converse holds up to isomorphism.

\begin{lem}\label{lem:subregular_is_non_degenerate}
    Let \( X \) be a regular diagrammatic set, let \( x \colon U \to X \) be a non-degenerate cell, and \( \iota\colon V \incl U \) an inclusion of atoms.
    Then \( x \iota \) is non-degenerate.
\end{lem}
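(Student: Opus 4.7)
The plan is to suppose given any factorisation $x\iota = yp$ with $p \colon V \surj W$ a collapse and $y \colon W \to X$ a cell, and to show that $p = \idd{V}$; the equation $y = x\iota$ will then be automatic.

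First I would reduce to the case where $y$ is non-degenerate. By the Eilenberg--Zilber decomposition (Lemma \ref{lem:ez_lemma}), write $y = zq$ with $z \colon W' \to X$ non-degenerate and $q \colon W \surj W'$ a collapse, so that $x\iota = z(qp)$ with $qp$ again a collapse (since $\atom^-$ is closed under composition). If I can prove $qp = \idd{V}$, then the inequalities $\dim V \ge \dim W \ge \dim W' = \dim V$ collapse to equalities, so by Corollary \ref{cor:fibration_same_dim_is_iso} both $p$ and $q$ are isomorphisms, hence by skeletality of $\atom$ and Remark \ref{rmk:no_nontrivial_auto} both are identities, yielding $y = z = x\iota$ as required.

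The core of the argument is to exploit the monicity of $x$, $z$, and $\iota$. Regularity of $X$ gives that $x$ and $z$ are monomorphisms in $\atom\Set$; the final paragraph of the proof of Proposition \ref{prop:atom_ez_category} shows that inclusions are monomorphisms in $\atom$, so $\iota$ becomes a monomorphism in $\atom\Set$ via the Yoneda embedding. For any section $\sigma$ of $qp$, the equation $z = z(qp)\sigma = x\iota\sigma$ determines $\iota\sigma$ uniquely (by monicity of $x$), and hence $\sigma$ uniquely (by monicity of $\iota$). Thus $qp$ admits at most one section, and by axiom \ref{enum:ez3} exactly one.

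It remains to show that a collapse $qp \colon V \surj W'$ with a unique section is an isomorphism. By the proof of Proposition \ref{prop:atom_ez_category}, sections of $qp$ biject with minimal elements of the fibre $(qp)^{-1}(\top_{W'})$, so this fibre has a unique minimal element $v_0$. By Lemma \ref{lem:fiber_collapse_properties}, every element of the fibre is either minimal or a minimal upper bound of two elements of the fibre, so starting from $\{v_0\}$ and closing under minimal upper bounds in $V$ (where the minimal upper bound of $(v_0, v_0)$ is just $v_0$) one finds the fibre to be $\{v_0\}$. Surjectivity together with order-preservation of $qp$ give $qp(\top_V) = \top_{W'}$, so $\top_V \in (qp)^{-1}(\top_{W'})$, hence $\top_V = v_0$; Lemma \ref{lem:fiber_collapse_properties} then forces $\dim V = \dim W'$, and Corollary \ref{cor:fibration_same_dim_is_iso} concludes that $qp$ is an isomorphism, hence the identity. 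The main obstacle is bridging the categorical notion of non-degeneracy (controlled by regularity of $X$) and the order-theoretic analysis of fibres of collapses; once sections are identified with minimal elements of the top fibre via the proof of Proposition \ref{prop:atom_ez_category}, the remainder is a routine combinatorial consequence of the lemmas on fibres of collapses.
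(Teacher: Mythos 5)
Your argument is correct, and its skeleton coincides with the paper's: reduce to the Eilenberg--Zilber decomposition and exploit that \( x \) and \( \iota \) are monomorphisms. Where you diverge is the last step. The paper's proof is two lines: since \( x\iota = yp \) is a composite of monomorphisms, it is a monomorphism, hence so is \( p \) (if \( gf \) is monic then \( f \) is monic); a collapse is a split epimorphism by \ref{enum:ez3}, and a split epimorphism that is also a monomorphism is an isomorphism, hence the identity by skeletality and rigidity. You instead use monicity only to conclude that \( qp \) has a \emph{unique} section, and then re-enter the combinatorics of Proposition \ref{prop:atom_ez_category} --- sections correspond to minimal elements of \( \invrs{(qp)}(\top_{W'}) \), a fibre with a unique minimal element is a singleton by Lemma \ref{lem:fiber_collapse_properties}, so \( \top_V = v_0 \) and Corollary \ref{cor:fibration_same_dim_is_iso} applies. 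Each of these steps checks out (in particular, the closure of \( \set{v_0} \) under minimal upper bounds really is \( \set{v_0} \), and the fibre is that closure by the argument in Proposition \ref{prop:atom_ez_category}), so the proof is valid; but note that the same monicity you already have gives \( qp \) monic directly, and ``split epi and mono implies iso'' makes the entire fibre analysis unnecessary. What your detour buys is a combinatorial picture of \emph{why} such a collapse is trivial; what it costs is length and a dependence on the internal details of the proof of Proposition \ref{prop:atom_ez_category} rather than only on its statement.
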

\begin{proof}
    Let \( y p = x \iota \) be the Eilenberg--Zilber decomposition of \( x \iota \).
    Since both \( x \) and \( \iota \) are monomorphisms, \( p \) is also a monomorphism, hence the identity.
\end{proof}

\begin{prop}\label{prop:rdcpx_are_regular}
    A diagrammatic set \( X \) is regular if and only if it is isomorphic to \( i^* P \) for a regular directed complex \( P \).
\end{prop}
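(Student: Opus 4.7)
The ``if'' direction follows from \ref{lem:nondeg_in_rdcpx}: in $i^*P$, a cell is non-degenerate if and only if it is an inclusion, hence a monomorphism. For the converse, given a regular diagrammatic set $X$, I would construct a regular directed complex $P$ together with an isomorphism $X \cong i^*P$ in $\atom\Set$.

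Let the underlying set of $P$ be $\Nd X$. For non-degenerate cells $y \colon V \to X$ and $x \colon U \to X$, declare $y \le x$ when $y = x \iota$ for some inclusion $\iota \colon V \incl U$; by regularity $x$ is a monomorphism, so when such an $\iota$ exists it is unique. This defines a partial order, with antisymmetry following from \ref{enum:ez1}. For each covering $y \prec x$ witnessed by $\iota$, the element $\iota(\top_V)$ is covered by $\top_U$ in $U$ with some orientation $\a \in \set{-, +}$; assign the same orientation to $y \prec x$ in $P$. Uniqueness of $\iota$ makes this well-defined, and it is easily checked that $P$ is thereby an oriented graded poset.

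To verify $P$ is a regular directed complex, for each $x \in P$ of shape $U$ I would define $\pi_x \colon U \to P$ by $z \mapsto x \iota_z$, where $\iota_z \colon U_z \incl U$ is the unique inclusion with image $\clset{z}$; the cell $x \iota_z$ is non-degenerate by \ref{lem:subregular_is_non_degenerate}. The definition of the order on $P$ ensures that $\pi_x$ lands in $\clset{x} \subseteq P$, and using that $x$ is a monomorphism one verifies that $\pi_x \colon U \to \clset{x}$ is an order- and orientation-preserving bijection: for surjectivity, any $w \le x$ in $P$ witnessed by $\iota \colon U_w \incl U$ satisfies $w = \pi_x(\iota(\top_{U_w}))$. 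Hence $\clset{x} \cong U$ is an atom, $P$ is a regular directed complex, and each $\pi_x$ is an inclusion in $\rdcpx$.

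Finally, define $\Psi \colon X \to i^*P$ componentwise: for $x \in X(U)$ with Eilenberg--Zilber factorisation $x = y \circ p$, set $\Psi_U(x) \eqdef \pi_y \circ p$, a cartesian map as the composite of a collapse and an inclusion. Bijectivity at each $U$ follows from \ref{lem:ez_lemma} together with \ref{prop:epi_mono_factorisation}: given a cartesian $f \colon U \to P$, its epi-mono factorisation $f = i \circ p$ has $i = \pi_y$ for $y \eqdef i(\top_V)$, yielding $y \circ p \in X(U)$ as preimage. Naturality reduces to the identity $\pi_y \circ j = \pi_{y j}$ for inclusions $j$ into the shape of $y$, which is a routine check using that inclusions in skeletal $\atom$ are determined by their image. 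The main obstacle, in my view, is not any single step but the bookkeeping needed to see that the order, the orientation, and the family $\set{\pi_x}_{x \in P}$ are all well-defined and mutually consistent; regularity is precisely what makes every witnessing inclusion unique, and this uniqueness threads through every verification.
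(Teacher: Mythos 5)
Your proof is correct and follows essentially the same route as the paper: the poset you build on \( \Nd X \) with the factorisation order, oriented via the shapes of the upper elements, is precisely the poset \( \xi X \) of representable subobjects that the paper uses (regularity makes non-degenerate cells and representable subobjects coincide). The only divergence is that you verify \( X \cong i^*P \) by hand via Eilenberg--Zilber decompositions, whereas the paper obtains the isomorphism by combining Corollary \ref{cor:rdcpx_colim_its_atom} with Cisinski's result that a regular presheaf is the colimit of its representable subobjects.
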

\begin{proof}
	One direction follows from Lemma \ref{lem:nondeg_in_rdcpx}.
    Conversely, suppose \( X \) is regular. 
    Then, as detailed in \cite[\S 8.2.21]{cisinski_prefaisceaux_2006}, the representable subobjects of \( X \) form a poset \( \xi X \).
    We claim that \( \xi X \) is in fact the underlying poset of a regular directed complex.
    If \( x \colon U \incl X \) is an element of \( \xi X \), then \( \clset{x} \) in \( \xi X \) is by definition the poset of representable subobjects of \( U \), which is isomorphic to the underlying poset of \( U \). 
    Thus we can endow \( \clset{x} \) with the orientation of \( U \), which makes it an atom, hence makes \( \xi X \) a regular directed complex.
    Let \( \fun{F} \colon \xi X \to \atom\Set \) be the functor sending a representable subobject \( x \colon U \incl X \) to \( U \).
    By Corollary \ref{cor:rdcpx_colim_its_atom}, \( \colim \fun{F} \cong i^* (\xi X) \), and by \cite[Lemme 8.2.22]{cisinski_prefaisceaux_2006}, \( X \cong \colim \fun{F} \), which completes the proof.
\end{proof}

\section{Homotopy theory of diagrammatic sets} \label{sec:homotopy}

\subsection{Cylinder object}

The goal of this subsection is to prove that the functor \( \thearrow{} \gray -\colon\atom\Set \to \atom\Set \) is an exact cylinder in the sense of \cite[\S 2.4.8]{cisinski_higher_2019}.
As a corollary, we will deduce that the category \( \atom \) is a test category, and we will further use the cylinder to characterise the model structure we obtain.

\begin{dfn}[Exact cylinder]
	Let \( \widehat\C \) be a category of presheaves on a small category \( \C \).
	An \emph{exact cylinder on \( \widehat\C \)} is an endofunctor \( \fun{I} \) on \( \widehat\C \) together with natural transformations \( (\iota^-, \iota^+)\colon \bigid{\widehat\C} \amalg \bigid{\widehat\C} \to \fun{I} \) and \( \sigma\colon \fun{I} \to \bigid{\widehat\C} \) such that
    \begin{enumerate}[align=left]
	   \item[{\crtcrossreflabel{(DH0)}[enum:dh0]}] each component of \( (\iota^-, \iota^+) \) is a monomorphism, and each component of the composite \( \sigma (\iota^-, \iota^+) \) is a codiagonal morphism;
	   \item[{\crtcrossreflabel{(DH1)}[enum:dh1]}] the functor \( \fun{I} \) preserves small colimits and monomorphisms.
    \end{enumerate}
\end{dfn}

\begin{rmk} \label{rmk:exact_cylinder_pullback}
	An exact cylinder is called a ``donn\'ee homotopique \'el\'ementaire'' in \cite[D\'efinition 1.3.6]{cisinski_prefaisceaux_2006}.
	Cisinski's definition includes the extra condition (DH2) requiring that, for all monomorphisms \( m\colon X \to Y \) and \( \a \in \set{+, -} \),
	\[\begin{tikzcd}
	X & Y \\
	{\fun{I}X} & {\fun{I}Y}
	\arrow["m", from=1-1, to=1-2]
	\arrow["{\iota^\a}", from=1-1, to=2-1]
	\arrow["{\iota^\a}", from=1-2, to=2-2]
	\arrow["{\fun{I}m}", from=2-1, to=2-2]
\end{tikzcd}\]
	be a pullback square, but as observed in \cite[Lemma 3.10]{olschok2009left} this is implied by the others.
\end{rmk}

\noindent Recall that, given a small monoidal category, we can extend its monoidal structure via Day convolution \cite{day1970closed} to its category of presheaves.
This monoidal structure is biclosed and uniquely characterised, among monoidal biclosed structures, by the fact that the Yoneda embedding is strong monoidal. 
Applied to \( (\atom, \gray, \pt) \), this gives \( (\atom\Set, \gray, \pt) \) a structure of biclosed monoidal category. 
Explicitly, given diagrammatic sets \( X, Y \), their Gray product \( X \gray Y \) is defined by
\[
    X \otimes Y\colon U \mapsto \int^{V, W \in \smallatom} X(V) \times Y(W) \times \atom(U, V \gray W),
\]
that is, cells \( x \colon U \to X \gray Y \) are represented by triples
\[    
	(y \colon V \to X, z \colon W \to Y, f \colon U \to V \gray W),
\]
quotiented by the equivalence relation generated by
\( (y, z, f) \sim (y', z', f')
\)
if and only if there exist cartesian maps of atoms \( g, h \) such that \(y = y'g \), \(z = z'h\), and \( (g \gray h)f = f' \).

\begin{lem}\label{lem:representative_nd_cell_day_convolution}
    Let \( X, Y \) be diagrammatic sets and let \( z \colon U \to X \gray Y \) be a non-degenerate cell.
    Then there exist non-degenerate cells \( x \colon V \to X \) and \( y \colon W \to Y \) such that \( U = V \gray W \) and \( x \) is represented by \( (x, y, \idd U) \).
\end{lem}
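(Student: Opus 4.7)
The plan is to iteratively normalise an arbitrary representative of $z$ using the equivalence relation defining the coend, combined with the Eilenberg--Zilber decomposition both in the diagrammatic sets $X, Y$ and in the category $\atom$. I will write ``$\sim$'' for a single application of the equivalence generator.

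Start with any representative $(y_0, w_0, f_0)$ of $z$, with $y_0 \colon V_0 \to X$, $w_0 \colon W_0 \to Y$, and $f_0 \colon U \to V_0 \gray W_0$. Applying Lemma~\ref{lem:ez_lemma} to $y_0$ and $w_0$ yields EZ decompositions $y_0 = \bar y p_1$ and $w_0 = \bar w p_2$ with $\bar y, \bar w$ non-degenerate; an immediate application of the equivalence generator gives
\[
  (y_0, w_0, f_0) \sim (\bar y, \bar w, (p_1 \gray p_2) f_0).
\]
Next, apply the orthogonal factorisation system of Proposition~\ref{prop:epi_mono_factorisation} to $(p_1 \gray p_2) f_0 \colon U \to V \gray W$, writing it as $\iota \circ p$ with $p \colon U \surj U''$ a collapse and $\iota \colon U'' \incl V \gray W$ an inclusion.

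The geometric crux is that $U''$ is canonically a Gray product: being an atom, it has a greatest element $(v, w) \in V \times W$, so as a closed subset of $V \gray W$ it equals $\clset{v} \times \clset{w}$; unpacking the Gray product orientation shows that this is exactly $\clset{v} \gray \clset{w}$ as an oriented graded poset. Setting $V' \eqdef \clset{v}$, $W' \eqdef \clset{w}$ and correspondingly $\iota = \iota_V \gray \iota_W$ for the evident inclusions, a further application of the generator yields
\[
  (\bar y, \bar w, (\iota_V \gray \iota_W) p) \sim (\bar y \iota_V, \bar w \iota_W, p).
\]
A final EZ decomposition $\bar y \iota_V = x q_V$ and $\bar w \iota_W = y q_W$, with $x \colon V'' \to X$ and $y \colon W'' \to Y$ non-degenerate, combined with one last application of the generator, produces a representative $(x, y, r)$ of $z$ with $r \eqdef (q_V \gray q_W) p$.

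To conclude, by Lemma~\ref{lem:gray_preserves_epi_mono} and the fact that Grothendieck fibrations compose, $r$ is a collapse $U \surj V'' \gray W''$. Letting $h \colon V'' \gray W'' \to X \gray Y$ be the cell represented by $(x, y, \idd{V'' \gray W''})$, we have $z = h \circ r$, and the non-degeneracy of $z$ forces $r = \idd{U}$ and $h = z$. In particular, $U = V'' \gray W''$, and $z$ is represented by $(x, y, \idd{U})$, as required. The main obstacle is the identification of $U''$ with a Gray product of smaller atoms: without this step, the third component of the representative cannot be brought down to an identity, and the normalisation stalls.
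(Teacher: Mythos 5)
Your proof is correct and follows essentially the same route as the paper's: the crux in both is that an atom inside $V' \gray W'$ is of the form $\clset{(v,w)} \cong \clset{v} \gray \clset{w}$, so the inclusion part of the epi--mono factorisation of the third component splits as a Gray product of inclusions, after which non-degeneracy of $z$ kills the remaining collapse. The only difference is bookkeeping: you normalise the $X$- and $Y$-components by Eilenberg--Zilber decomposition before and after, and invoke the non-degeneracy of $z$ once on the composite collapse, whereas the paper invokes it twice (once to kill the collapse part of $f$, and once at the end, via $q \gray \idd{W}$, to deduce non-degeneracy of the components).
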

\begin{proof}
    Let \( (x' \colon V' \to X, y' \colon W' \to X, f \colon U \to V' \gray W') \) be a representative of \( z \). 
    Factorising \( f \) as a collapse \( p \) followed by an inclusion \( m \), since every atom in \( V' \gray W' \) is of the form \( \clset{(v, w)} \cong \clset{v} \gray \clset{w} \), the inclusion \(m\) must be of the form \(i \gray j\colon V \gray W \incl V' \gray W'\).
    Then the cell \( z' \) represented by
    \begin{equation*}
        (x' \colon V' \to X, y' \colon W' \to Y, i \gray j \colon V \gray W \to V' \gray W'),
    \end{equation*}
    is such that \( z = z' p \), and since \( z \) is non-degenerate \( p \) must be the identity. 
    Letting \( x \eqdef x'i \) and \( y \eqdef y'j \), we see that \( (x, y, \idd U) \) also represents \( z \).

    Suppose that \( x = \bar{x}q \) for some cell \( \bar{x}\colon \bar{V} \to X \) and collapse \( q \).
    Then \( z \) factors as \( (\bar{x}, y, \idd{\bar{V} \gray W})(q \gray \idd{W}) \), and by Lemma \ref{lem:gray_preserves_epi_mono} \( q \gray \idd{W} \) is a collapse.
    Since \( z \) is non-degenerate, \( q \gray \idd{W} \) must be the identity, hence \( q \) is an identity, which proves that \( x \) is non-degenerate.
    Dually, \( y \) is non-degenerate.
\end{proof}

\begin{rmk}\label{rmk:form_cell_day_convolution}
    Thus, any \( z \colon U \to X \gray Y \) can be written as 
    \begin{equation*}
        \begin{tikzcd}
            U & {V\gray W} & {X \gray Y}
            \arrow["{x \gray y}", from=1-2, to=1-3]
            \arrow["p", two heads, from=1-1, to=1-2]
        \end{tikzcd}
    \end{equation*}
    where \( x \colon V \to X \) and \( y \colon W \to Y \) are non-degenerate. 
    For any other decomposition \( z = (x' \gray y') p' \), we have \( V \gray W = V' \gray W' \) and \( p = p' \), but we cannot assume \( V = V' \) or \( W = W' \), unless \( X \) and \( Y \) are regular.
\end{rmk}

\begin{lem}\label{lem:day_gray_product_preserves_mono}
    Let \( m \colon X \incl Y \) and \( m' \colon X' \incl Y' \) be monomorphisms in \( \atom\Set \). 
    Then \( m \gray m' \colon X \gray X' \to Y \gray Y' \) is a monomorphism.
\end{lem}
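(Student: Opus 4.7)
The plan is to show that $m \gray m'$ is componentwise injective, since monomorphisms in the presheaf category $\atom\Set$ are precisely the cellwise injections. Fix an atom $U$ and suppose $z_1, z_2 \colon U \to X \gray X'$ satisfy $(m \gray m')z_1 = (m \gray m')z_2$; the goal will be to deduce $z_1 = z_2$.

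First, I will establish an auxiliary fact: if $m \colon X \to Y$ is a monomorphism of diagrammatic sets and $x \colon V \to X$ is a non-degenerate cell, then $mx \colon V \to Y$ is non-degenerate. Given any factorisation $mx = zq$ with $q \colon V \surj V'$ a collapse, condition \ref{enum:ez3} supplies a section $\iota$ of $q$, so that $q\iota = \idd{V'}$; then $m(x\iota q) = z(q\iota)q = zq = mx$, and the cellwise injectivity of $m$ yields $x\iota q = x$, forcing $q$ to be an identity by the non-degeneracy of $x$. An analogous argument, together with Lemma \ref{lem:representative_nd_cell_day_convolution}, then shows that the Gray product of two non-degenerate cells is itself non-degenerate.

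Next, combining Lemma \ref{lem:ez_lemma} with Lemma \ref{lem:representative_nd_cell_day_convolution}, I can write $z_i = (x_i \gray x_i')p_i$ with $x_i \colon V_i \to X$ and $x_i' \colon W_i \to X'$ non-degenerate, and $p_i \colon U \surj V_i \gray W_i$ a collapse. By the auxiliary fact, $(mx_i) \gray (m'x_i')$ is a non-degenerate cell of $Y \gray Y'$, and hence $(m \gray m')z_i = ((mx_i) \gray (m'x_i'))p_i$ realises the Eilenberg--Zilber decomposition of $(m \gray m')z_i$. Uniqueness of this decomposition combined with the hypothesis then forces $p_1 = p_2$, a common target $T \eqdef V_1 \gray W_1 = V_2 \gray W_2$, and the equality $(mx_1) \gray (m'x_1') = (mx_2) \gray (m'x_2')$ of non-degenerate cells in $Y \gray Y'$.

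The main obstacle is the last step: reflecting this equality in $Y \gray Y'$ back to $x_1 \gray x_1' = x_2 \gray x_2'$ in $X \gray X'$. Equivalence of cells in the Day convolution is witnessed by zig-zags of generating relations involving cartesian maps of atoms; since $m$ and $m'$ are natural transformations, each such generator between triples in the image of $(m, m')$ should lift to a generator for $X \gray X'$. The delicate point, highlighted in Remark \ref{rmk:form_cell_day_convolution}, is that the factorisation $T = V \gray W$ is in general not unique, so the two canonical representatives of the common non-degenerate cell may involve distinct factorisations of $T$. I propose to handle this by tracking the zig-zag explicitly, exploiting the rigidity of atoms (Remark \ref{rmk:no_nontrivial_auto}) to restrict the admissible cartesian maps, together with the cellwise injectivity of $m, m'$ to reflect the equality across each step, thereby concluding $z_1 = z_2$.
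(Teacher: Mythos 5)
Your strategy of checking cellwise injectivity via Eilenberg--Zilber decompositions is reasonable in outline, and your auxiliary fact (a monomorphism sends non-degenerate cells to non-degenerate cells) is correctly proved. But the argument stops exactly where the real difficulty lies. The final step --- passing from \( (mx_1) \gray (m'x_1') = (mx_2) \gray (m'x_2') \) in \( Y \gray Y' \) back to \( x_1 \gray x_1' = x_2 \gray x_2' \) in \( X \gray X' \) --- is only announced as a plan (``I propose to handle this by tracking the zig-zag explicitly''), not carried out. This is precisely the non-trivial content of the lemma: equality in the coend defining the Day convolution is witnessed by a zig-zag of elementary identifications, and a zig-zag in \( Y \gray Y' \) may pass through triples \( (y, y', f) \) whose components do \emph{not} lie in the images of \( m \) and \( m' \), so there is no reason it should lift generator by generator. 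Injectivity of a map induced on a colimit or coend does not follow from injectivity on representatives, and Remark \ref{rmk:form_cell_day_convolution} already warns that the canonical representatives are not unique when the presheaves are not regular, so rigidity of atoms alone will not pin the zig-zag down. There is a secondary gap of the same nature: you assert that the Gray product of two non-degenerate cells is non-degenerate ``by an analogous argument'', but the analogy breaks --- your auxiliary argument used cellwise injectivity of \( m \) to cancel \( x\iota q = x \), and the cell \( x \gray y \colon V \gray W \to X \gray Y \) is not known to be a monomorphism. The claim is true, but proving it requires essentially the same coend analysis you defer.

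For comparison, the paper's proof avoids all element-level analysis. By Remark \ref{rmk:notation_boundary}, every monomorphism of diagrammatic sets is a transfinite composition of pushouts of (coproducts of) boundary inclusions \( \bd U \incl U \); since \( - \gray - \) preserves colimits in each variable, \( m \gray m' \) is then assembled by pushouts and transfinite compositions from maps of the form \( \bd_U \gray \idd{V} \) and \( \idd{U} \gray \bd_V \), which are inclusions of regular directed complexes by Lemma \ref{lem:gray_preserves_epi_mono} and remain monomorphisms under \( i^* \); one concludes because monomorphisms are closed under pushout, coproduct and transfinite composition in any presheaf category. If you want to salvage a direct cellwise argument, you would in effect have to reprove this cellular decomposition in disguise; I recommend adopting the saturation argument instead.
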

\begin{proof}
    Since \( - \gray - \) preserves colimits in both variables, building on Remark \ref{rmk:notation_boundary}, we can construct \( m \gray m' \) as a transfinite composition of coproducts of pushouts of \( \bd_{U} \gray \idd{V} \) and \( \idd{U} \gray \bd_V \).
    These are monomorphisms in \( \rdcpx \) by Lemma \ref{lem:gray_preserves_epi_mono}, and the full and faithful functor \( i^*\colon \rdcpx \incl \atom\Set \) preserves monomorphisms.
    Since pushouts and transfinite compositions of monomorphisms are again monomorphisms in any presheaf category, we conclude.
\end{proof}

\begin{prop}\label{prop:exact_cylinder_object}
	The functor \( \thearrow{} \gray - \) together with the natural transformations \( (\iota^-, \iota^+) \) and \( \sigma \) induced by the maps
	\(
		(0^-, 0^+)\colon \pt \amalg \pt \incl \thearrow{} \) and 
	\( \varepsilon\colon \thearrow{} \surj \pt
	\), respectively,
	is an exact cylinder on \( \atom\Set \).
\end{prop}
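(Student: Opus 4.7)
The plan is to verify the two axioms \ref{enum:dh0} and \ref{enum:dh1} in turn, relying almost entirely on the general properties of Day convolution together with Lemma \ref{lem:day_gray_product_preserves_mono}.

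For \ref{enum:dh1}, note that the Day convolution monoidal structure on \( \atom\Set \) is biclosed, so in particular \( \thearrow{} \gray - \) is a left adjoint and preserves all small colimits. Preservation of monomorphisms is then the content of Lemma \ref{lem:day_gray_product_preserves_mono} applied with \( m' \eqdef \idd{\thearrow{}} \): for any monomorphism \( m\colon X \incl Y \), the map \( \idd{\thearrow{}} \gray m \) is a monomorphism.

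For \ref{enum:dh0}, the key point is the behaviour of Day convolution on the unit and coproducts. Since \( \pt \) is the unit of \( (\atom\Set, \gray, \pt) \), we have a natural isomorphism \( \pt \gray X \cong X \); and because \( - \gray X \) preserves coproducts, \( (\pt \amalg \pt) \gray X \cong X \amalg X \). Under these identifications, the natural transformation \( (\iota^-, \iota^+) \) is the image of the inclusion \( (0^-, 0^+)\colon \pt \amalg \pt \incl \thearrow{} \) under \( - \gray X \). Each component \( \iota^\a\colon X \to \thearrow{} \gray X \) is thus the image of the inclusion \( 0^\a\colon \pt \incl \thearrow{} \), which is a monomorphism of atoms, so by Lemma \ref{lem:day_gray_product_preserves_mono} again \( \iota^\a \) is a monomorphism. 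For the codiagonal condition, since \( \varepsilon \circ 0^\a = \idd{\pt} \) in \( \rdcpx \), functoriality of \( - \gray X \) gives \( \sigma \circ \iota^\a = \idd{X} \) for each \( \a \in \set{-, +} \); hence the composite \( \sigma \circ (\iota^-, \iota^+)\colon X \amalg X \to X \) is the codiagonal, as required.

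There is no serious obstacle here: the proof is essentially bookkeeping around Day convolution, with all the non-trivial content — that Gray product of monomorphisms is again a monomorphism — already handled by Lemma \ref{lem:day_gray_product_preserves_mono}. The only point requiring a little care is to justify that the natural transformations \( \iota^\pm \) and \( \sigma \) defined on representables (via \( (0^-, 0^+) \) and \( \varepsilon \)) extend to the asserted natural transformations on all of \( \atom\Set \) in a way compatible with the coproduct identification \( (\pt \amalg \pt) \gray X \cong X \amalg X \); this follows from the fact that Day convolution preserves colimits in each variable.
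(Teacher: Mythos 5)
Your proof is correct and follows essentially the same route as the paper's: both arguments reduce \ref{enum:dh1} to the left-adjointness of \( \thearrow{} \gray - \) under Day convolution together with Lemma \ref{lem:day_gray_product_preserves_mono}, and both verify \ref{enum:dh0} by observing that the components of \( (\iota^-, \iota^+) \) are Gray products with monomorphisms and that \( \varepsilon(0^-, 0^+) \) is the codiagonal on \( \pt \). Your write-up is somewhat more explicit about the unit and coproduct identifications, but the content is the same.
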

\begin{proof}
	By Lemma \ref{lem:day_gray_product_preserves_mono}, \( \thearrow{} \gray - \) preserves monomorphisms, so in particular the components of \( (\iota^-, \iota^+) \) are monomorphisms.
	Since it is a left adjoint, it also preserves small colimits.
	Finally, the composite \( \varepsilon(0^-, 0^+) \) is the codiagonal on \( \pt \), which implies that each component of \( \sigma(\iota^-, \iota^+) \) is a codiagonal.
\end{proof}

\subsection{Model category structure}

\begin{dfn}[\( \infty \)\nbd equivalence]
   A morphism \( f \colon X \to Y \) of diagrammatic sets is an \emph{\( \infty \)\nbd equivalence} if the induced functor \( \slice{\atom}{f} \colon \slice{\atom}{X} \to \slice{\atom}{Y} \) is a weak equivalence in the Thomason model structure on \( \Cat \) \cite{thomason_model}.
\end{dfn}

\begin{lem}\label{lem:representable_map_are_infty_equivalences}
    Let \( f \colon U \to V \) be a cartesian map of atoms.
    Then \( f \) is an \( \infty \)\nbd equivalence.
\end{lem}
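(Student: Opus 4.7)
The plan is to reduce this to the observation that every slice $\slice{\atom}{U}$ has a terminal object, so that both source and target of $\slice{\atom}{f}$ have contractible nerves.

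First, I would note that for any atom $U$, the identity cell $\idd{U}\colon U \to U$ is a terminal object in $\slice{\atom}{U}$: given any object $x\colon W \to U$, a morphism $x \to \idd{U}$ in the slice is a map $h\colon W \to U$ in $\atom$ with $\idd{U} \after h = x$, which forces $h = x$. So there is exactly one such morphism, namely $x$ itself. The same applies to $\slice{\atom}{V}$ with terminal object $\idd{V}$.

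Second, I would invoke the classical fact that the nerve of a small category possessing a terminal object is contractible (the terminal object provides a natural transformation from the identity functor to the constant functor at the terminal object, which nerves to a simplicial homotopy contracting $N(\slice{\atom}{U})$ to a point). Consequently the unique functors $\slice{\atom}{U} \to \ptcat$ and $\slice{\atom}{V} \to \ptcat$ belong to $\W_\infty$.

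Third, since the composite $\slice{\atom}{U} \xrightarrow{\slice{\atom}{f}} \slice{\atom}{V} \to \ptcat$ equals the map $\slice{\atom}{U} \to \ptcat$, which is in $\W_\infty$, and the map $\slice{\atom}{V} \to \ptcat$ is also in $\W_\infty$, the 2-of-3 property for $\W_\infty$ forces $\slice{\atom}{f}$ itself to be in $\W_\infty$, so $f$ is an $\infty$\nbd equivalence. There is essentially no obstacle here: the only content is the observation about the terminal object in the slice, which is a purely formal consequence of $U$ being representable in $\atom\Set$.
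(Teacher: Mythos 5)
Your proposal is correct and follows essentially the same argument as the paper: both observe that $\slice{\atom}{U}$ and $\slice{\atom}{V}$ have terminal objects $\idd{U}$ and $\idd{V}$ since $U$, $V$ are representable, hence their functors to $\ptcat$ are weak equivalences, and conclude by 2-out-of-3 applied to the commuting triangle over $\ptcat$. The only cosmetic difference is that you justify the asphericity of a category with a terminal object via contractibility of its nerve, whereas the paper cites it as an axiom of a basic localiser; both are valid for $\W_\infty$.
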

\begin{proof}
    The class of weak equivalences in the Thomason model structure on \( \Cat \) is a basic localiser. 
    Hence, by definition, for any category \( \C \) with a terminal object, the unique functor from \( \C \) to the terminal category \( \ptcat \) is a weak equivalence. 
    Since \( U, V \) are representable, the slice categories \( \slice{\atom}{U} \) and \( \slice{\atom}{V} \) have terminal objects \( \idd U \) and \( \idd V \), respectively. 
    Furthermore, the diagram of functors 
    \[    
    \begin{tikzcd}
		{\slice{\atom}{U}} && {\slice{\atom}{V}} \\
            & \ptcat
	    \arrow["{\slice{\atom}{f}}", from=1-1, to=1-3]
            \arrow[from=1-1, to=2-2]
            \arrow[from=1-3, to=2-2]
        \end{tikzcd}
\]
    strictly commutes, so we can conclude by 2-out-of-3 for weak equivalences.
\end{proof}

\begin{prop}\label{prop:atom_is_test}
    The category \( \atom \) is a test category.
\end{prop}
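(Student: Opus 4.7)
My plan is to apply one of Cisinski's standard criteria for being a test category. All the necessary ingredients are already in place: Proposition \ref{prop:point_is_terminal} provides a terminal object $\pt$ in $\atom$, while Proposition \ref{prop:exact_cylinder_object} supplies an exact cylinder $\thearrow{} \gray -$ on $\atom\Set$ arising from the representable interval $\thearrow{}$ with endpoints $0^-, 0^+\colon \pt \incl \thearrow{}$ and contraction $\varepsilon\colon \thearrow{} \surj \pt$. I would invoke the Cisinski criterion from \cite{cisinski_prefaisceaux_2006} stating that a small category with a terminal object whose presheaf category carries an exact cylinder induced by a representable interval is a test category.

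Unpacking the definition of test category from the preliminaries, this reduces to two verifications. First, that $\atom \to \ptcat$ is a weak equivalence: this is immediate, since the nerve of a category with a terminal object is contractible. Second, that each slice $\slice{\atom}{U}$ is a weak test category; this slice inherits a terminal object $\idd{U}$, and the exact cylinder on $\atom\Set$ restricts to an exact cylinder on the slice presheaf category $\atom\Set/U$ by pullback (exactness is stable under slicing of presheaf categories), so the same criterion applies.

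The main subtlety will be that our exact cylinder is induced by Day convolution with respect to the Gray product rather than the cartesian product in $\atom\Set$. Cisinski's general theory of exact cylinders accommodates any functorial cylinder satisfying the axioms, so this is not a genuine obstacle, but it does mean one cannot simply cite a result phrased purely in terms of cartesian intervals; one should either use the abstract exact-cylinder form of the criterion, or alternatively verify directly, using Lemma \ref{lem:representable_map_are_infty_equivalences} applied to $\varepsilon\colon \thearrow{} \surj \pt$, that $\thearrow{}$ is aspherical as an interval. Either way, the result is a short formal deduction from the structural properties already established.
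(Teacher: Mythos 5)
Your overall strategy --- asphericity of \( \atom \) from the terminal object, followed by a Cisinski-style criterion applied to the exact cylinder \( \thearrow{} \gray - \) --- is the same as the paper's, but the criterion you invoke is misstated, and two hypotheses that the actual citation requires are absent from your argument. The result the paper uses is \cite[Corollaire 8.2.16]{cisinski_prefaisceaux_2006}, and its hypotheses include that the base category be a \emph{regular Eilenberg--Zilber category}; this is Proposition \ref{prop:atom_ez_category}, and it is an essential input, not a formality. There is no general theorem saying ``terminal object plus an exact cylinder induced by a representable interval implies test category'': the general interval criteria that avoid the Eilenberg--Zilber hypothesis (separating, locally aspherical intervals) are phrased for the \emph{cartesian} product with the interval, which --- as you yourself observe --- is not what \( \thearrow{} \gray - \) is. So the ``abstract exact-cylinder form of the criterion'' you appeal to only exists in the presence of the regular skeletal structure, and your proof never invokes it.

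The second gap is in the asphericity condition. What the corollary requires is that the cylinder be aspherical over every representable, i.e.\ that \( \sigma_U \colon \thearrow{} \gray U \to U \) be an \( \infty \)\nbd equivalence for \emph{every} atom \( U \); verifying only that \( \varepsilon \colon \thearrow{} \to \pt \) is an \( \infty \)\nbd equivalence (asphericity of the interval itself) is strictly weaker, and the implication from one to the other is exactly the kind of statement that holds for cartesian cylinders in totally aspherical categories but cannot be assumed for the Gray cylinder. The repair is immediate from material you already have: atoms are closed under Gray products, so \( \thearrow{} \gray U \) is again an atom and \( \sigma_U \) is a cartesian map of atoms, whence Lemma \ref{lem:representable_map_are_infty_equivalences} applies to \( \sigma_U \) directly --- this is precisely how the paper concludes. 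Your slice-by-slice discussion essentially re-derives what the cited corollary already packages and is fine in spirit, but the proof is not complete until the Eilenberg--Zilber hypothesis and the asphericity of \( \sigma_U \) for all \( U \) are explicitly brought in.
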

\begin{proof}
    As \( \pt \) is the terminal object, \( \atom \) is aspherical. 
    Therefore, it suffices to prove that \( \atom \) is a local test category.
    This follows from \cite[Corollaire 8.2.16]{cisinski_prefaisceaux_2006} using Proposition \ref{prop:atom_ez_category}, Proposition \ref{prop:exact_cylinder_object}, and the fact that \( \sigma \colon \thearrow{} \gray U \to U \) is an \( \infty \)\nbd equivalence by Lemma \ref{lem:representable_map_are_infty_equivalences}.
\end{proof}

\noindent Therefore, by the general theory of test categories, we obtain the following.

\begin{thm}\label{thm:cisinski_model_structure}
There is a cofibrantly generated model structure on \( \atom\Set \) whose cofibrations are the monomorphisms, and whose weak equivalences are the \( \infty \)\nbd equivalences.
\end{thm}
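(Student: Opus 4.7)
The plan is to invoke Cisinski's theorem on model structures associated with test categories (see \cite[Th\'eor\`eme 4.4.20 and Corollaire 4.2.18]{cisinski_prefaisceaux_2006}, or alternatively \cite[Theorem 2.4.19]{cisinski_higher_2019}). This theorem states that for any test category $\C$ equipped with an exact cylinder satisfying mild hypotheses, the presheaf category $\widehat\C$ carries a cofibrantly generated model structure whose cofibrations are the monomorphisms and whose weak equivalences are the $\W_\infty$-equivalences, i.e., morphisms $f\colon X \to Y$ such that the induced functor $\slice{\C}{X} \to \slice{\C}{Y}$ on categories of elements is a Thomason weak equivalence.

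First I would observe that every hypothesis of the cited theorem has already been verified for $\atom$ in the preceding results: by Proposition \ref{prop:atom_is_test}, $\atom$ is a test category; by Proposition \ref{prop:atom_ez_category}, $\atom$ is a (regular) Eilenberg--Zilber category, so in particular the presheaf category $\atom\Set$ is locally presentable and the monomorphisms form the cofibrations of a cofibrantly generated weak factorisation system (cellularly generated by the boundary inclusions $\bd_U \incl U$ as noted in Remark \ref{rmk:notation_boundary}); and by Proposition \ref{prop:exact_cylinder_object}, $\thearrow{} \gray -$ is an exact cylinder on $\atom\Set$.

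Next I would identify the two candidate notions of weak equivalence. Cisinski's $\W_\infty$-equivalences on $\atom\Set$ are, by construction, the morphisms $f \colon X \to Y$ whose image under the category-of-elements functor $\int_\atom \colon \atom\Set \to \Cat$ is a Thomason weak equivalence. Since $\int_\atom X$ is naturally identified with $\slice{\atom}{X}$ (via the standard identification of the category of elements of a presheaf with its slice over the Yoneda embedding), this recovers exactly the definition of $\infty$-equivalence given above. Therefore the model structure produced by Cisinski's theorem has the cofibrations and weak equivalences claimed in the statement.

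I do not anticipate any substantial obstacle: the entire content of the theorem has been prepared by the earlier results, and the step of matching definitions is purely book-keeping. The only thing to be mildly careful about is citing the correct formulation, since different sources package the hypotheses slightly differently (some require the Eilenberg--Zilber or skeletal structure, others do not); but under our hypotheses all standard formulations apply, and the conclusion is the same.
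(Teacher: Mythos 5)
Your proposal is correct and matches the paper's approach exactly: the paper derives this theorem as an immediate consequence of Proposition \ref{prop:atom_is_test} together with Cisinski's general theory of test categories, which is precisely what you do. The additional book-keeping you supply (identifying \( \int_\atom X \) with \( \slice{\atom}{X} \) and noting that the Eilenberg--Zilber and exact-cylinder hypotheses were established earlier) is consistent with, and slightly more explicit than, the paper's one-line justification.
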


\noindent We will call this the \emph{Cisinski model structure} on \( \atom\Set \).
This model structure is Quillen-equivalent to the classical model structure on \( \sSet \) via the left Kan extension of the functor sending an atom \( U \) to the simplicial nerve of its category of elements.
In fact, we can obtain a simpler description of this Quillen equivalence as follows.

\begin{dfn}[Subdivision]\label{dfn:subdivision}
	The \emph{subdivision} functor \( \Sd_\smallatom \colon \atom\Set \to \sset \) is the left Kan extension along the Yoneda embedding of the composite of the forgetful functor \( \mathsf{U} \colon \atom \to \poscat \) with the simplicial nerve \( N \colon \poscat \to \sset \). 
	It has a right adjoint \( \Ex_\smallatom \colon \sset \to \atom\Set \).
\end{dfn}

\begin{prop}\label{prop:quillen_adjunction_sd_ex}
    The adjunction \( \Sd_\smallatom \dashv \Ex_\smallatom \)
    is a Quillen equivalence between the Cisinski model structure on \( \atom\Set \) and the classical model structure on \( \sSet \).
\end{prop}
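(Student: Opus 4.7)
The plan is to deduce Proposition \ref{prop:quillen_adjunction_sd_ex} from the canonical test-theoretic Quillen equivalence between \( \atom\Set \) and \( \sSet \) that is already available from Proposition \ref{prop:atom_is_test}, by constructing a natural comparison with \( \Sd_\smallatom \).

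First, I would show that \( \Sd_\smallatom \) is left Quillen. Cocontinuity is automatic from the Kan-extension definition. To verify preservation of cofibrations, by the cellular model described in Remark \ref{rmk:notation_boundary} it suffices to check that \( \Sd_\smallatom(\bd U) \to \Sd_\smallatom(U) = N\mathsf{U}(U) \) is a monomorphism of simplicial sets for each atom \( U \). Using Corollary \ref{cor:rdcpx_colim_its_atom} to express \( \bd U \) as a colimit of the proper atomic faces of \( U \) and the cocontinuity of \( \Sd_\smallatom \), this amounts to observing that chains of elements in the sub-posets \( \mathsf{U}(\clset{y}) \) for \( y \in U \smallsetminus \set{\top_U} \) assemble to the subsimplicial set of \( N\mathsf{U}(U) \) consisting of chains avoiding \( \top_U \). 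Preservation of acyclic cofibrations will follow automatically once the Quillen equivalence is established.

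For the Quillen equivalence, I would compare \( \Sd_\smallatom \) with the canonical functor arising from the test-category structure, namely the left Kan extension of \( U \mapsto N(\slice{\atom}{U}) \) along the Yoneda embedding; this is a left Quillen equivalence by Proposition \ref{prop:atom_is_test} together with Thomason's equivalence \( \Cat \simeq \sSet \) and the definition of a test category. Define a functor \( \slice{\atom}{U} \to \mathsf{U}(U) \) by \( (V, f) \mapsto f(\top_V) \); this is well-defined because any morphism \( g \colon V \to V' \) over \( U \) satisfies \( f'(g(\top_V)) \le f'(\top_{V'}) \), and it is natural in \( U \) with respect to cartesian maps of atoms. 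Apply Quillen's Theorem A: the fibre of this functor over \( y \in \mathsf{U}(U) \) contains \( (\clset{y}, \iota_y) \) as a terminal object (any \( (V, f) \) with \( f(\top_V) = y \) factors uniquely through \( \clset{y} \) by Proposition \ref{prop:epi_mono_factorisation}), hence is contractible. Therefore the induced natural transformation \( N(\slice{\atom}{U}) \to N\mathsf{U}(U) \) is a pointwise weak equivalence on representables.

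The main obstacle is promoting this pointwise comparison on representables to a natural weak equivalence on all cofibrant objects, which will then, by 2-out-of-3, transfer the Quillen-equivalence property from the canonical functor to \( \Sd_\smallatom \). I would argue that both left Kan extensions preserve monomorphisms, pushouts, and transfinite compositions; since every diagrammatic set is cofibrant and is built as a cellular complex of representables, one proceeds by transfinite induction along such a presentation. At each successor stage one has a pushout square of cofibrations in \( \sSet \) whose top map is a weak equivalence, and left properness of the Kan--Quillen model structure ensures the induced map on pushouts is still a weak equivalence; at each limit stage one uses closure of weak equivalences between cofibrant objects under filtered colimits along cofibrations. This yields a natural weak equivalence between \( N_\atom^* \) and \( \Sd_\smallatom \) on all of \( \atom\Set \), completing the proof.
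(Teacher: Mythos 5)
Your overall architecture --- compare \( \Sd_\smallatom \) with the ``canonical'' left Kan extension of \( U \mapsto N(\slice{\atom}{U}) \) via Quillen's Theorem A on representables, then propagate to all diagrammatic sets by cellular induction --- is a genuinely different route from the paper's, and a much longer one. The paper's proof observes that, by regularity of atoms, the poset \( \xi U \) of representable subobjects of an atom \( U \) is isomorphic to its underlying poset \( \fun{U}(U) \), so that \( \Sd_\smallatom \dashv \Ex_\smallatom \) is literally isomorphic to the adjunction \( \Sd \dashv \Ex \) that Cisinski attaches to any regular skeletal (Eilenberg--Zilber) category in \cite[\S 8.2.21]{cisinski_prefaisceaux_2006}; it then concludes in one step by \cite[Proposition 8.2.29]{cisinski_prefaisceaux_2006}, which says that this adjunction is a Quillen equivalence whenever the base is a test category. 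No Theorem A and no gluing argument are needed.

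The genuine gap in your version is the input you take for granted: that the left Kan extension of \( U \mapsto N(\slice{\atom}{U}) \) is a left Quillen equivalence ``by Proposition \ref{prop:atom_is_test} together with Thomason's equivalence and the definition of a test category''. The definition of a test category only yields an equivalence of \emph{localised} categories with \( \Cat[\invrs{\W_\infty}] \), induced by the category-of-elements functor and its right adjoint; it does not by itself produce a cocontinuous left Quillen equivalence to \( \sset \), and in particular it does not tell you that this left Kan extension preserves monomorphisms --- a property your cellular-induction step needs for \emph{both} functors, since otherwise the successor-stage pushouts are not homotopy pushouts and left properness does not apply. Supplying exactly these facts is the content of the machinery in \cite[\S 8.2]{cisinski_prefaisceaux_2006} that the paper's proof invokes directly; once that is imported, the Theorem A comparison and the transfinite induction become an avoidable detour (and, if retained, the induction should be replaced by the standard statement that a natural transformation between cocontinuous, cofibration-preserving functors out of a presheaf category which is a weak equivalence on representables is a weak equivalence everywhere). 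A minor further point: Theorem A requires contractibility of the comma categories over each \( y \in \fun{U}(U) \), not of the strict fibres; fortunately the comma category over \( y \) is isomorphic to \( \slice{\atom}{\clset{y}} \), which has a terminal object, so this part is only a cosmetic fix.
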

\begin{proof}
Let \( \xi \colon \atom \to \Cat \) be the functor that sends any atom to the category of representable subobjects of its Yoneda embedding. 
Because atoms are regular as diagrammatic sets, the representable subobjects of an atom \( U \) are in bijection with the inclusions of atoms \( V \subseteq U \), which are in bijection with the elements of \( U \).
It follows that \( \xi U \) is a poset, isomorphic to the underlying poset of \( U \), and the diagram of functors
\[        
	\begin{tikzcd}[sep=small]
            & \poscat \\
            \atom && \sset \\
            & \Cat
            \arrow["N", from=1-2, to=2-3]
	    \arrow["\fun{U}", from=2-1, to=1-2]
            \arrow["\xi"', from=2-1, to=3-2]
            \arrow["N"', from=3-2, to=2-3]
        \end{tikzcd}
\]
    commutes up to natural isomorphism.
    As a consequence, the adjoint pair \( \Sd_\smallatom \dashv \Ex_\smallatom \) is isomorphic to the adjoint pair \( \Sd \dashv \Ex \) of \cite[\S 8.2.21]{cisinski_prefaisceaux_2006}.
	We conclude by applying \cite[Proposition 8.2.29]{cisinski_prefaisceaux_2006}.
\end{proof}

\noindent We wish to further characterise the model structure, and give an explicit set of generating acyclic cofibrations.
Given morphisms \( f \colon X \to Y \) and \( f' \colon X' \to Y' \) of diagrammatic sets, we let \( f \hat\gray f' \) denote their \emph{pushout-product}, that is, the universal morphism obtained from the diagram
\[\begin{tikzcd}
	{X \gray X'} & {X \gray Y'} \\
	{Y \gray X'} & {(X \gray Y') \cup (Y \gray X')} \\
	&& {Y \gray Y'.}
	\arrow["{X \gray f'}", from=1-1, to=1-2]
	\arrow["{f \gray X'}", from=1-1, to=2-1]
	\arrow[from=1-2, to=2-2]
	\arrow["{f \gray Y'}", curve={height=-12pt}, from=1-2, to=3-3]
	\arrow[from=2-1, to=2-2]
	\arrow["{Y \gray f'}"', curve={height=12pt}, from=2-1, to=3-3]
	\arrow["\lrcorner"{anchor=center, pos=0.125, rotate=180}, draw=none, from=2-2, to=1-1]
	\arrow["{f \hat\gray f'}"{description}, dotted, from=2-2, to=3-3]
\end{tikzcd}\]
Following \cite[\S 2.4.11]{cisinski_higher_2019}, the next definitions can be given relative to any exact cylinder in a category of presheaves, but we specialise them to diagrammatic sets for simplicity.
Let \( m \colon X \to Y \) be a monomorphism of diagrammatic sets.
For all \( \a \in \set{ +, - } \), in the pushout-product
\[        
\begin{tikzcd}
            X & Y \\
            {\thearrow{} \gray X} & {(\thearrow{} \gray X) \cup Y} \\
            && {\thearrow{} \gray Y}
            \arrow["{\iota^\a}", from=1-1, to=2-1]
            \arrow[from=2-1, to=2-2]
            \arrow["m", from=1-1, to=1-2]
            \arrow[from=1-2, to=2-2]
            \arrow["{\thearrow{} \gray m}"', curve={height=12pt}, from=2-1, to=3-3]
            \arrow["{\iota^\a}", curve={height=-12pt}, from=1-2, to=3-3]
            \arrow["{0^\a \hat\gray m}"{description}, dotted, from=2-2, to=3-3]
       	    \arrow["\lrcorner"{anchor=center, pos=0.125, rotate=180}, draw=none, from=2-2, to=1-1]
        \end{tikzcd}
\]
    the outer diagram is a pullback by Proposition \ref{prop:exact_cylinder_object} and Remark \ref{rmk:exact_cylinder_pullback}, and the legs of the pushouts are monomorphisms, so, as is the case in any category of presheaves \cite[Proposition 1.55]{johnstone1977topos}, the induced map \( 0^\a \hat\gray m \) is again a monomorphism.
    Similarly, in the pushout-product
    \[
        \begin{tikzcd}
            {\bd \thearrow{} \gray X} & {\thearrow{} \gray X} \\
            {\bd \thearrow{} \gray Y} & {\thearrow{} \gray X \cup \bd \thearrow{} \gray Y} \\
            && {\thearrow{} \gray Y}
            \arrow["{\bd \thearrow{} \gray m }"', from=1-1, to=2-1]
            \arrow["{\bd_{\thearrow{}} \gray X}", from=1-1, to=1-2]
	    \arrow["{\bd_{\thearrow{}} \gray Y}"', curve={height=18pt}, from=2-1, to=3-3]
            \arrow["{\thearrow{} \gray m}", curve={height=-18pt}, from=1-2, to=3-3]
            \arrow[from=2-1, to=2-2]
            \arrow[from=1-2, to=2-2]
	    \arrow["{\bd_{\thearrow{}} \hat\gray m}"{description}, dotted, from=2-2, to=3-3]
       	    \arrow["\lrcorner"{anchor=center, pos=0.125, rotate=180}, draw=none, from=2-2, to=1-1]
        \end{tikzcd}
\]
    the outer square is a pullback by Proposition \ref{prop:exact_cylinder_object} and stability of colimits under base change, so \( \bd_{\thearrow{}} \hat\gray m \) is also a monomorphism.
	
Given a class \( S \) of morphisms in \( \atom\Set \), we let \( l(S) \) and \( r(S) \) denote the classes of morphisms that have, respectively, the left and right lifting property with respect to all morphisms in \( S \).

\begin{dfn}[Class of anodyne extensions]
   A class of morphisms \( \An \) in \( \atom\Set \) is a \emph{class of anodyne extensions} if
   \begin{enumerate}[align=left]
	   \item[{\crtcrossreflabel{(AN0)}[enum:an0]}] there exists a set \( S \) of morphisms such that \( \An = l(r(S)) \),
	   \item[{\crtcrossreflabel{(AN1)}[enum:an1]}] for all monomorphisms \( m \), \( 0^\a \hat\gray m \) is in \( \An \), and
	   \item[{\crtcrossreflabel{(AN2)}[enum:an2]}] for all \( j \) in \( \An \), \( \bd_{\thearrow{}} \hat\gray j \) is again in \( \An \). 
   \end{enumerate}
   In this case, we call \( S \) a \emph{generating set of anodyne extensions}.
\end{dfn}

\begin{lem}\label{lem:pushout_prodcut_inclusions_rdcpx}
    For all inclusions \( m, m' \) in \( \rdcpx \), the pushout-product \( m \hat \gray m' \) is again an inclusion in \( \rdcpx \).
\end{lem}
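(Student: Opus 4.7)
The plan is to realise the pushout-product concretely as a closed subset inclusion inside $Y \gray Y'$. By Lemma~\ref{lem:gray_preserves_epi_mono}, the maps $m \gray Y'$ and $Y \gray m'$ are inclusions, and in particular $X \gray X'$, $X \gray Y'$, and $Y \gray X'$ may be viewed as closed sub-regular-directed complexes of $Y \gray Y'$. On underlying posets these are $X \times X'$, $X \times Y'$, and $Y \times X'$ inside $Y \times Y'$, so since $m$ and $m'$ are injective the set-theoretic identity $(X \gray Y') \cap (Y \gray X') = X \gray X'$ holds, and the union $Z \eqdef (X \gray Y') \cup (Y \gray X')$ is again a closed subset of $Y \gray Y'$. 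Using that closed subsets of a regular directed complex form a lattice of sub-complexes (as developed in \cite[Chapter 2]{amar_pasting}), $Z$ inherits the structure of a regular directed complex and its subset inclusion into $Y \gray Y'$ is an inclusion in $\rdcpx$.

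The key step is then to verify that $Z$, with its evident inclusions from $X \gray Y'$ and $Y \gray X'$, is the pushout of $X \gray Y' \leftarrow X \gray X' \rightarrow Y \gray X'$ in $\rdcpx$. I would transport the question to $\atom\Set$: by Corollary~\ref{cor:gray_product_preserved} the Gray product is preserved by $i^*$, and by Proposition~\ref{prop:colim_inclusions_preserved_by_i^*} pushouts of inclusions in $\rdcpx$, once shown to exist as unions of closed subsets, are preserved by $i^*$. In the presheaf category $\atom\Set$, the pushout of two subobjects along their intersection is simply their union, computed cellwise, which identifies the pushout with $i^*Z$. By full faithfulness of $i^*$, the corresponding pushout in $\rdcpx$ is $Z$, and under this identification the pushout-product map is the subset inclusion $Z \incl Y \gray Y'$, which is an inclusion in $\rdcpx$ by construction.

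The main obstacle I anticipate is the first step: verifying cleanly that $Z$, as a union of closed subsets, is a bona fide sub-regular-directed complex with inherited orientation, and that it realises the pushout in $\rdcpx$ rather than merely in $\atom\Set$. This is a technical point about how closed subsets and their orientations interact, which should follow from the foundational material of \cite[Chapter 2]{amar_pasting}, though it may require some careful unpacking in the specific case of a Gray product.
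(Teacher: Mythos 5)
Your proposal is correct and follows essentially the same route as the paper: the paper's proof likewise reduces to the facts that \( \rdcpx \) has pushouts of inclusions along inclusions (computed as unions of closed subsets), that \( i^* \) preserves these colimits and Gray products, and that the underlying function of \( m \hat\gray m' \) is a pushout-product of injections, hence injective. Your version merely spells out in more detail the identification of the pushout with the closed subset \( (X \gray Y') \cup (Y \gray X') \subseteq Y \gray Y' \), which is exactly the content the paper leaves implicit.
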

\begin{proof}
    The fact that \( \rdcpx \) is closed under pushout-products of inclusions follows from the fact that \( \rdcpx \) has pushouts of inclusions along inclusions, and that \( i^*\colon \rdcpx \incl \atom\Set \) preserves colimits and Gray products.
	Moreover, the underlying function of \( m \hat\gray m' \) in \( \Set \) is a cartesian pushout-product of injections, which is always injective.
\end{proof}

\begin{dfn}[Horn]\label{dfn:horn}
	Let \( U \) be an atom, \( \a \in \set{+, -} \), and let \( V \submol \bd^\a U \) be a rewritable submolecule of the input or output boundary of \( U \).
    We let \( \Lambda^V_U \) be the regular directed complex \( \bd U \setminus (V \setminus \bd V) \), and call the evident inclusion \( \lambda^V_U \colon \Lambda^V_U \incl U \) a \emph{horn of \( U \)}. 
	We let
    \( J \eqdef \set{\Lambda^V_U \incl U \mid U \text{ atom, } V \submol \bd^\a U \text{ rewritable} } \).
\end{dfn}

\begin{lem}\label{lem:gray_prodcut_is_model_monoidal}
    Let \( U, V \) be atoms, and let \( W \submol \bd^\a V \) be a rewritable submolecule.
    Then
    \begin{enumerate}
        \item \( \lambda^W_V \hat\gray \bd_U = \lambda^{W \gray U}_{V \gray U} \).
        \item \( \bd_U \hat\gray \lambda^W_V = \lambda^{U \gray W}_{U \gray V} \),
    \end{enumerate}
\end{lem}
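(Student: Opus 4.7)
The plan is to identify both sides of each equation as explicit subcomplexes of $V \gray U$ (respectively $U \gray V$), and then verify that their underlying subsets coincide; since both inherit their oriented graded poset structure from the ambient Gray product, an equality of underlying sets lifts to equality as subcomplexes.

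For the first equation, by Lemma \ref{lem:pushout_prodcut_inclusions_rdcpx} the pushout-product $\lambda^W_V \hat\gray \bd_U$ is an inclusion in $\rdcpx$; moreover, its defining pushout square is built entirely from inclusions in $\rdcpx$, so by Proposition \ref{prop:colim_inclusions_preserved_by_i^*} I may compute it as a union in $\rdcpx$. Explicitly, the left-hand side is the inclusion of $(\Lambda^W_V \gray U) \cup (V \gray \bd U)$ into $V \gray U$. On the other side, by Definition \ref{dfn:horn}, $\lambda^{W \gray U}_{V \gray U}$ is the inclusion of $\bd(V \gray U) \setminus ((W \gray U) \setminus \bd(W \gray U))$ into $V \gray U$, provided that $W \gray U$ is a rewritable submolecule of a directed boundary of $V \gray U$; since $W \submol \bd^\a V$, we have $W \gray U \subseteq \bd^\a V \gray U \subseteq \bd^\a(V \gray U)$, and the rewritable-submolecule property itself follows from closure of rewritable submolecules under Gray product with an atom, which I would pull from \cite{amar_pasting}.

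I would then compare the two underlying subsets elementwise in $V \times U$. Using $\bd(V \gray U) = (\bd V \gray U) \cup (V \gray \bd U)$ and the immediate identity
\[
    (W \gray U) \setminus \bd(W \gray U) = (W \setminus \bd W) \times (U \setminus \bd U),
\]
a case split on whether $u \in \bd U$ collapses both memberships to the same disjunction: either $u \in \bd U$, or $u \notin \bd U$ and $v \in \bd V \setminus (W \setminus \bd W) = \Lambda^W_V$. The second equation is then proved by an entirely symmetric argument; the only change is that the orientation rule for Gray products forces $W \submol \bd^\a V$ to entail $U \gray W \subseteq \bd^{(-)^{\dim U}\a}(U \gray V)$, so that $\lambda^{U \gray W}_{U \gray V}$ is again a legitimate horn.

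The only non-bookkeeping input, and therefore the main obstacle, is the closure of rewritable submolecules under Gray product with an atom, which I would invoke from \cite{amar_pasting} rather than reprove; once that is granted, both equations reduce to the same elementary set-theoretic manipulation inside a product of posets.
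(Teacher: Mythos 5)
Your proposal is correct and follows essentially the same route as the paper's proof: both reduce to (i) citing the closure of rewritable submolecules under Gray product with an atom from \cite{amar_pasting} so that the right-hand horn is well-defined, and (ii) an elementwise comparison of the two subsets of \( V \times U \), which the paper compresses to ``by inspection'' and you carry out explicitly (your observation about the orientation flip \( \bd^{(-)^{\dim U}\a} \) in the dual case is a correct detail the paper leaves implicit in ``the other equation is dual'').
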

\begin{proof}
	We have that \( W \gray U \submol \bd^\a (V \gray U) \) is a rewritable submolecule by \cite[Lemma 7.2.14, Proposition 7.2.16]{amar_pasting}.
	By inspection, the image of the inclusion \( \lambda^W_V \hat\gray \bd U \) in \( V \gray U \) consists of the elements \( (v, u) \) such that \( v \in \Lambda^W_V \) and \( u \in U \), or \( v \in V \) and \( u \in \bd U \), which are precisely the elements of \( \Lambda^{W \gray U}_{V \gray U} \).
	The other equation is dual.
\end{proof}

\begin{lem}\label{lem:pushout_product_cellular_extension}
    Let \( j \colon X \to Y \) be a morphism of diagrammatic sets and let \( I, I' \) be two sets of morphisms in \( \atom\Set \). 
    If for all \( i \in I \), \( j \hat\gray i \) is in \( l(r(I')) \), then for all \( i \in l(r(I)) \), \( j \hat\gray i \) is in \( l(r(I')) \).
\end{lem}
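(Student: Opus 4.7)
The plan is to use the biclosedness of the Gray product on \( \atom\Set \), which holds because \( -\gray- \) is defined by Day convolution of the monoidal structure on \( \atom \) (Corollary \ref{cor:monoidal_on_atom}). Biclosedness implies that the pushout-product \( j \hat\gray - \) fits into a two-variable lifting adjunction: for the fixed \( j \), there is a ``pullback-hom'' functor \( \langle j, -\rangle \) on the arrow category of \( \atom\Set \) such that, for all morphisms \( i \) and \( f \),
\[
    (j \hat\gray i) \pitchfork f \quad\Longleftrightarrow\quad i \pitchfork \langle j, f\rangle,
\]
where \( \pitchfork \) denotes the lifting relation. This is the standard reformulation of a Quillen two-variable adjunction in its lifting form, analogous to the manipulations carried out throughout \cite[\S 2.4]{cisinski_higher_2019}.

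With the adjunction in hand, I would set
\(
    K \eqdef \set{i : j \hat\gray i \in l(r(I'))}
\)
and observe that \( K = l(\set{\langle j, f\rangle : f \in r(I')}) \) by the above bijection. For any class of morphisms \( S \), the class \( l(S) \) is \emph{saturated}, meaning closed under pushouts, coproducts, transfinite compositions, and retracts; in particular, \( K \) is saturated. Since \( \atom\Set \) is locally presentable and \( I \) is a set, the small object argument implies that \( l(r(I)) \) is the smallest saturated class containing \( I \) (cf.\ \cite[\S 1.2]{cisinski_prefaisceaux_2006}). The hypothesis of the lemma amounts to \( I \subseteq K \), so by minimality \( l(r(I)) \subseteq K \), which is precisely the desired conclusion.

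The only mildly technical step is extracting the lifting version of the two-variable adjunction from the biclosedness of \( \gray \); however, this is purely formal, relying only on the existence of the internal homs adjoint to \( -\gray Y \) and \( X\gray - \) and not on any further combinatorial input about atoms. The remainder is a routine saturation argument, so I do not anticipate any substantive obstacle.
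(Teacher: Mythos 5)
Your proposal is correct and is essentially the same argument as the paper's: the paper simply cites \cite[Lemma B.0.10.(i)]{henry_model}, which encapsulates exactly the formal reasoning you spell out (biclosedness of \( \gray \) yields the lifting form of the two-variable adjunction, \( l(S) \) is saturated, and \( l(r(I)) \) is the smallest saturated class containing the set \( I \) by the small object argument in a presheaf category). No gaps.
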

\begin{proof}
    The functor \( - \gray - \) is biclosed, and the category \( \atom\Set \) is complete and cocomplete, so we can apply \cite[Lemma B.0.10.(i)]{henry_model} with \( I_1 \eqdef \set{j} \), \( I_2 \eqdef I \) and \( I_3 \eqdef I' \).
\end{proof}

\begin{prop}\label{prop:lr_J_is_anodyne}
    The class \( l(r(J)) \) is a class of anodyne extensions.
\end{prop}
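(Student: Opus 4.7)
The plan is to verify the three defining properties \ref{enum:an0}, \ref{enum:an1}, \ref{enum:an2} of a class of anodyne extensions in turn.

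Condition \ref{enum:an0} is immediate: since \( \atom \) is skeletal and each atom is a finite oriented graded poset, the collection \( J \) is indexed by a set, and by construction \( l(r(J)) \) is then of the required form.

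For condition \ref{enum:an1}, I will use Remark \ref{rmk:notation_boundary}, which exhibits \( \set{\bd_U \mid U \in \atom} \) as a cellular model, so the class of monomorphisms coincides with \( l(r(\set{\bd_U})) \). Applying Lemma \ref{lem:pushout_product_cellular_extension} with \( j \eqdef 0^\a \), \( I \eqdef \set{\bd_U} \) and \( I' \eqdef J \), it suffices to show that \( 0^\a \hat\gray \bd_U \in l(r(J)) \) for each atom \( U \). The key observation is that \( 0^\a \) is itself a horn inclusion: since \( \set{0^{-\a}} \) is trivially a rewritable submolecule of \( \bd^{-\a}\thearrow{} = \set{0^{-\a}} \), unfolding Definition \ref{dfn:horn} gives \( \Lambda^{\set{0^{-\a}}}_{\thearrow{}} = \set{0^\a} \), so that \( 0^\a = \lambda^{\set{0^{-\a}}}_{\thearrow{}} \). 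Applying the first identity of Lemma \ref{lem:gray_prodcut_is_model_monoidal} then yields
\[
	0^\a \hat\gray \bd_U \;=\; \lambda^{\set{0^{-\a}} \gray U}_{\thearrow{} \gray U} \;\in\; J.
\]

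For condition \ref{enum:an2}, I apply Lemma \ref{lem:pushout_product_cellular_extension} again, this time with \( j \eqdef \bd_{\thearrow{}} \) and \( I \eqdef I' \eqdef J \), reducing the claim to showing that \( \bd_{\thearrow{}} \hat\gray \lambda^W_V \in l(r(J)) \) for each \( \lambda^W_V \in J \). By the second identity of Lemma \ref{lem:gray_prodcut_is_model_monoidal}, this pushout-product is exactly \( \lambda^{\thearrow{} \gray W}_{\thearrow{} \gray V} \in J \).

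No genuine obstacle remains: the combinatorial content — that rewritability of submolecules is preserved by Gray products, and that the pushout-product of a horn with a boundary inclusion (in either order) is again a horn — has already been absorbed into Lemma \ref{lem:gray_prodcut_is_model_monoidal}, so the whole proposition collapses to two applications of the cellularity reduction Lemma \ref{lem:pushout_product_cellular_extension}. The only small thing worth flagging is the identification of the endpoint map \( 0^\a\colon \pt \incl \thearrow{} \) itself with an explicit horn inclusion, which lets \ref{enum:an1} be treated uniformly with \ref{enum:an2} as an instance of the pushout-product formula for horns.
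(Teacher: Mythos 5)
Your proof is correct and follows essentially the same route as the paper's: the same two applications of Lemma \ref{lem:pushout_product_cellular_extension} to reduce to generating boundary inclusions and horns, combined with Lemma \ref{lem:gray_prodcut_is_model_monoidal} and the observation that \( 0^\a \) is itself a horn of \( \thearrow{} \). Your explicit identification \( 0^\a = \lambda^{\set{0^{-\a}}}_{\thearrow{}} \) merely spells out what the paper asserts in passing.
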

\begin{proof}
    The category \( \rdcpx \) is small, so \( J \) is a set, hence \ref{enum:an0} is satisfied. 
    For \ref{enum:an1}, we want to show that for all monomorphisms \( m \) of diagrammatic sets, the morphism \( 0^\a \hat\gray m \) is in \( l(r(J)) \). 
    Since the class of monomorphisms is equal to \( l(r(\set{\bd_U\colon \bd U \incl U})) \), by Lemma \ref{lem:pushout_product_cellular_extension} it suffices to show that \( 0^\a \hat\gray \bd_U \) is in \( l(r(J)) \) for all atoms \( U \). 
    But the map \( 0^\a \) is a horn, so this follows from Lemma \ref{lem:gray_prodcut_is_model_monoidal}. 
	From the same lemma we deduce that \( \bd_{\thearrow{}} \hat\gray \lambda^V_U \) is in \( l(r(J)) \) for all horn inclusions \( \lambda^V_U \). 
	By Lemma \ref{lem:pushout_product_cellular_extension} again, \( \bd_{\thearrow{}} \hat\gray j \) is in \( l(r(J)) \) for all \( j \in l(r(J)) \), proving \ref{enum:an2}.
\end{proof}

\noindent In what follows, an \emph{anodyne extension} will be a morphism in \( l(r(J)) \).

\begin{prop}\label{prop:submolecule_inclusions_anodyne}
	All submolecule inclusions are anodyne extensions.
\end{prop}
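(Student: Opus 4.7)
The plan is to reduce the statement to a single technical lemma: for every molecule $V$, every $k \geq 0$, and every $\a \in \set{-, +}$, the inclusion $\bd^\a_k V \incl V$ is anodyne. Granting this lemma, the proposition will follow from three general properties of the class $l(r(J))$ —it is closed under composition, closed under pushouts, and contains all isomorphisms— combined with the observation that submolecule inclusions are generated under composition by isomorphisms and the canonical inclusions $U \incl U \cp k V$ and $V \incl U \cp k V$, each of which is, by the very construction of $U \cp k V$ as a pushout, the pushout of a boundary inclusion of the lemma's form.

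The lemma itself I would prove by well-founded induction on the cardinality of $V$, with trivial base case $V = \pt$. In the inductive step I distinguish two cases. First, if $V$ is an atom of dimension $n$, then for $k = n - 1$ I would observe that, since atoms are round, $\bd^\a V$ is tautologically a rewritable submolecule of itself, and a direct calculation using globularity gives $\Lambda^{\bd^\a V}_V = \bd^{-\a} V$; hence $\bd^{-\a} V \incl V$ is a horn inclusion belonging to $J$. For $k < n - 1$, globularity yields $\bd^\a_k V = \bd^\a_k \bd^\a V$, and I would factor $\bd^\a_k V \incl V$ as $\bd^\a_k \bd^\a V \incl \bd^\a V \incl V$, with the first map anodyne by the induction hypothesis applied to the strictly smaller molecule $\bd^\a V$, and the second by the previous paragraph.

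Second, if $V = V_1 \cp i V_2$ is a non-trivial pasting, with $|V_j| < |V|$ for $j = 1, 2$, I would use the boundary formulas from the preliminaries. For $k \le i$, the inclusion $\bd^\a_k V \incl V$ factors through the appropriate $V_j$, and $V_j \incl V$ is itself a pushout of the boundary inclusion $\bd^{-\a}_i V_{3-j} \incl V_{3-j}$ along the identification of the common $i$-boundary, so anodyne by induction and closure under pushouts. For $k > i$, $\bd^\a_k V = \bd^\a_k V_1 \cp i \bd^\a_k V_2$ is itself a pasting, and I would factor the inclusion as
\[
	\bd^\a_k V_1 \cp i \bd^\a_k V_2 \incl V_1 \cp i \bd^\a_k V_2 \incl V_1 \cp i V_2,
\]
with each step exhibited as a pushout of $\bd^\a_k V_j \incl V_j$, anodyne by induction.

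The main obstacle is the case $k > i$: one must verify that the intermediate pasting $V_1 \cp i \bd^\a_k V_2$ is well-defined, which relies on the globularity identity $\bd^-_i \bd^\a_k V_2 = \bd^-_i V_2$, and that the described factorisation genuinely exhibits each step as a pushout of a boundary inclusion along an inclusion of the common $i$-boundary. Both are routine consequences of the universal property of the pasting and the boundary formulas, but the combinatorial bookkeeping is the least transparent part of the argument.
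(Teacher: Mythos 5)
Your proposal is correct and follows essentially the same route as the paper's proof: reduce to the boundary inclusions \( \bd^\a_k V \incl V \), induct (the paper inducts ``on submolecules'' where you use cardinality, which amounts to the same well-founded recursion), identify \( \bd^{-\a}V \incl V \) with the horn \( \lambda^{\bd^\a V}_V \) in the top-codimension atom case, and handle the pasting case via the same two pushout squares. The only cosmetic differences are the direction in which you state the horn identification and a stray \( \a \) in the notation \( \bd^{-\a}_i V_{3-j} \) for the leg of the pasting pushout, neither of which affects correctness.
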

\begin{proof}
	Because all isomorphisms are anodyne extensions, and anodyne extensions are closed under composition, it suffices to show that the inclusions of the form \( U \incl U \cp{k} V \) and \( V \incl U \cp{k} V \) are anodyne extensions for all \( k \geq 0 \).
	The pasting \( U \cp{k} V \) is defined by a pushout of the form
\[\begin{tikzcd}
	{\bd_k^+ U = \bd_k^- V} & V \\
	U & {U \cp{k} V}
	\arrow[hook, from=1-1, to=1-2]
	\arrow[hook', from=1-1, to=2-1]
	\arrow[hook', from=1-2, to=2-2]
	\arrow[hook, from=2-1, to=2-2]
	\arrow["\lrcorner"{anchor=center, pos=0.125, rotate=180}, draw=none, from=2-2, to=1-1]
\end{tikzcd}\]
and since anodyne extensions are closed under pushouts, it suffices to prove that \( \bd_k^\a U \incl U \) is an anodyne extension for all \( k \geq 0, \a \in \set{ +, - } \).
	This is trivial when \( U \) is a point, so we may proceed by induction on submolecules \cite[Comment 4.1.7]{amar_pasting}, and assume that all submolecule inclusions into proper submolecules of \( U \) are anodyne extensions.
	Suppose \( U \) is an atom, \( n \eqdef \dim U \).
	For \( k \geq n \), the inclusion is an identity.
	For \( k = n - 1 \), then \( \bd_k^\a U = \bd^\a U \) is equal to the horn \( \Lambda_U^{\bd^{-a}U} \), whose inclusion is an anodyne extension by definition.
	For \( k < n - 1 \), then by globularity the inclusion factors through \( \bd_k^\a U \incl \bd^\a U \), and we can apply the inductive hypothesis.

	Finally, suppose \( U \) splits into proper submolecules \( V \cp{i} W \).
	If \( k \le i \), then the inclusion of \( \bd_k^\a U \) factors through a submolecule inclusion into \( V \) or \( W \), and we can apply the inductive hypothesis.
	If \( k > i \), then the squares
	\[
\begin{tikzcd}
            {\bd^\a_k V} & {\bd^\a_k V \cp i \bd^\a_k W} && {\bd^\a_k W} & {V \cp i \bd^\a_k W} \\
            V & {V \cp i \bd^\a_k W} && W & {V \cp i W}
            \arrow[hook, from=1-1, to=1-2]
            \arrow[hook', from=1-1, to=2-1]
            \arrow["{j_1}", hook', from=1-2, to=2-2]
            \arrow[hook, from=1-4, to=1-5]
            \arrow[hook', from=1-4, to=2-4]
            \arrow["{j_2}", hook', from=1-5, to=2-5]
            \arrow[hook, from=2-1, to=2-2]
            \arrow[hook, from=2-4, to=2-5]
	    \arrow["\lrcorner"{anchor=center, pos=0.125, rotate=180}, draw=none, from=2-2, to=1-1]
    	\arrow["\lrcorner"{anchor=center, pos=0.125, rotate=180}, draw=none, from=2-5, to=1-4]
        \end{tikzcd}
\]
	are pushouts in \( \rdcpx \).
	By the inductive hypothesis, \(\bd_k^\a V \incl V \) and \(\bd_k^\a W \incl W \) are anodyne extensions, hence so are \( j_1, j_2 \) and their composite, which is equal to the inclusion \( \bd_k^\a U \incl U \).
\end{proof}

\begin{cor}\label{cor:any_mono_atom_anodyne}
    All inclusions of atoms are anodyne extensions.
\end{cor}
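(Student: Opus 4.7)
The plan is to prove the slightly stronger statement that for any molecule $W$ and any element $x \in W$, the canonical inclusion $\clset{x} \incl W$ is anodyne; the corollary then follows because every inclusion $\iota\colon V \incl U$ of atoms is, by the rigidity of atoms and the fact that the image of a cartesian inclusion is a downward-closed atom, isomorphic in $\atom$ to $\clset{\iota(\top_V)} \incl U$.

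To prove the strengthened statement, I would proceed by induction on submolecules in the sense of \cite[Comment 4.1.7]{amar_pasting}, following the pattern of Proposition \ref{prop:submolecule_inclusions_anodyne}. The base case $W = \pt$ is trivial. For the inductive step, one splits into cases according to the constructor used to build $W$.

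If $W$ is an atom $V_1 \celto V_2$, then either $x = \top_W$, in which case the inclusion is an identity, or $x$ belongs to $V_\a = \bd^\a W$ for some $\a \in \set{-, +}$; in the latter case the inclusion factors as
\[
	\clset{x} \incl V_\a \incl W,
\]
where the first arrow is anodyne by the inductive hypothesis applied to the proper submolecule $V_\a$, and the second is the horn inclusion $\lambda_W^{\bd^{-\a} W}$, belonging to $J$ since the boundaries of an atom are round molecules (so $\bd^{-\a} W \submol \bd^{-\a} W$ is rewritable, and a direct computation using globularity and roundness shows $\Lambda_W^{\bd^{-\a} W} = \bd^\a W$). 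If instead $W$ is obtained as a pasting $W_1 \cp{k} W_2$, then $x$ lies in at least one summand $W_i$, and the inclusion factors as
\[
	\clset{x} \incl W_i \incl W,
\]
with the first arrow anodyne by the inductive hypothesis and the second a canonical pasting inclusion, anodyne as a submolecule inclusion by Proposition \ref{prop:submolecule_inclusions_anodyne}.

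No significant obstacle is anticipated. The only routine check is that the closure of $x$ in $W$ coincides with its closure in the chosen summand, which holds because any chain in $W$ between cells of different summands must pass through their shared boundary, confining everything below $x$ to the summand containing $x$. Closure of anodyne extensions under composition then completes the induction.
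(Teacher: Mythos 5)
Your argument is correct and follows essentially the same route as the paper: the paper's proof simply cites Proposition \ref{prop:submolecule_inclusions_anodyne} together with \cite[Lemma 4.1.5]{amar_pasting}, the latter supplying exactly the factorisation of an arbitrary inclusion of atoms through boundary (horn) inclusions and pasting inclusions that you re-derive by hand via induction on submolecules. Your inline verification that $\bd^\a W \incl W$ is the horn $\lambda^{\bd^{-\a}W}_W$ and that pasting inclusions are anodyne matches the mechanism already used in the proof of Proposition \ref{prop:submolecule_inclusions_anodyne}, so the only difference is that you make the cited combinatorial lemma explicit.
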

\begin{proof}
    Follows from Proposition \ref{prop:submolecule_inclusions_anodyne} and \cite[Lemma 4.1.5]{amar_pasting}.
\end{proof}

\begin{lem}\label{lem:horn_contractible}
    Let \( U \) be an atom and \( \Lambda^V_U \) a horn of \( U \).
    Then the horn inclusion \( \lambda^V_U\colon \Lambda^V_U \incl U \) is an \( \infty \)-equivalence.
\end{lem}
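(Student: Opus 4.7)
The plan is to prove that both $U$ and $\Lambda^V_U$ are weakly contractible as diagrammatic sets, and then to invoke 2-out-of-3 applied to the composite $\Lambda^V_U \xrightarrow{\lambda^V_U} U \to \pt$. The weak contractibility of $U$ is immediate from Lemma \ref{lem:representable_map_are_infty_equivalences}, since the terminal projection $U \to \pt$ is a cartesian map of atoms, hence an $\infty$\nbd equivalence.

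The substance of the proof therefore lies in showing that $\Lambda^V_U$ is weakly contractible. My approach is to transport the question across the Quillen equivalence $\Sd_\smallatom \dashv \Ex_\smallatom$ of Proposition \ref{prop:quillen_adjunction_sd_ex}. Since every object of $\atom\Set$ is cofibrant (as cofibrations are monomorphisms), a morphism is an $\infty$\nbd equivalence if and only if its image under the left Quillen functor $\Sd_\smallatom$ is a weak equivalence of simplicial sets. Combining colimit-preservation of $\Sd_\smallatom$ with Corollary \ref{cor:rdcpx_colim_its_atom}, one checks that for every regular directed complex $P$ there is a natural isomorphism $\Sd_\smallatom P \cong N(\mathsf{U}(P))$, where $\mathsf{U}(P)$ is the underlying poset of $P$ and $N$ is the simplicial nerve. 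It thus suffices to prove that $N(\Lambda^V_U)$ is a contractible simplicial set.

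I would establish this by a topological argument. Via barycentric subdivision, the geometric realisation $|N(\Lambda^V_U)|$ is homeomorphic to the underlying space of the regular CW complex $\Lambda^V_U$, using the identification of regular directed complexes with regular CW complexes developed in \cite{amar_pasting}. Since $U$ is an atom of dimension $n \eqdef \dim U$, the space $|\bd U|$ is an $(n-1)$\nbd sphere, and since $V$ is round of dimension $n-1$, the subspace $|V|$ is a closed $(n-1)$\nbd ball embedded as a subcomplex of $|\bd U|$. By construction, $|\Lambda^V_U|$ is the complement in $|\bd U|$ of the open interior $|V \setminus \bd V|$, which by a regular-CW form of the generalised Schönflies theorem is a closed $(n-1)$\nbd ball, and in particular contractible.

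The main obstacle is this final topological step: the precise identification of $|\Lambda^V_U|$ as a closed ball relies on the combinatorial-topological correspondence between regular directed complexes and regular CW complexes, whose proof is non-trivial. A tempting purely combinatorial alternative --- trying to show via the gluing lemma that the inclusion $\bd^{-\a} U \incl \Lambda^V_U$ is an $\infty$\nbd equivalence, using that $\bd^{-\a} U$ is a weakly contractible molecule --- runs into the obstruction that the relevant intersection $\bd \bd^\a U$ is an $(n-2)$\nbd sphere rather than contractible, so that a naive Mayer--Vietoris-style gluing argument does not apply.
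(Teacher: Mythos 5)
Your proposal is correct and follows essentially the same route as the paper: both reduce the statement along the left Quillen equivalence \( \Sd_\smallatom \), identify \( \Sd_\smallatom(\Lambda^V_U) \) with the order complex of the horn, argue that this is a (PL) ball --- the paper by citing \cite[Propositions 10.1.29.(2) and 10.3.2]{amar_pasting}, you by the ball-complement-in-a-sphere argument (more precisely Newman's theorem in the PL category than the topological Sch\"onflies theorem, which would need local flatness) --- and conclude that \( \lambda^V_U \) is a map between contractible objects, reflected back to an \( \infty \)\nbd equivalence since all objects are cofibrant.
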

\begin{proof}
    The functor \( \Sd_\smallatom \) coincides with the functor \( -^\Delta \) from \cite[\S 10.3.1]{amar_pasting}.
    Since \( \Sd_\smallatom \) is left adjoint, the pushout diagram
    \[
        \begin{tikzcd}
            {\bd V} & {\Lambda^V_U} \\
            V & \bd U
            \arrow[hook, from=1-1, to=1-2]
            \arrow[hook', from=1-1, to=2-1]
            \arrow[hook', from=1-2, to=2-2]
            \arrow[hook, from=2-1, to=2-2]
	    \arrow["\lrcorner"{anchor=center, pos=0.125, rotate=180}, draw=none, from=2-2, to=1-1]
        \end{tikzcd}
\]
    of regular directed complexes is sent to a pushout diagram in \( \sset \).
	By \cite[Proposition 10.1.29.(2)]{amar_pasting} and \cite[Proposition 10.3.2]{amar_pasting}, we conclude that \( \Sd_\smallatom(\Lambda^V_U) \) is a PL-ball, thus it is contractible. 
	It follows that \( \Sd_\smallatom(\lambda^V_U) \) is a weak equivalence in \( \sset \), as is any map between contractible objects. 
	Since left Quillen equivalences reflect weak equivalences between cofibrant objects \cite[Corollary 1.3.16]{hovey_model_2007}, and every object is cofibrant in the classical model structure on \( \sset \), we conclude that \( \lambda^V_U \) is an \( \infty \)\nbd equivalence.
\end{proof}

\begin{thm}\label{thm:main_theorem}
	In the Cisinski model structure on \( \atom\Set \),
\begin{enumerate}
	    \item the cofibrations are generated by the inclusions \( \bd U \incl U \),
	    \item the acyclic cofibrations are generated by the horns \( \Lambda^V_U \incl U \),
\end{enumerate}
	where \( U \) ranges over all atoms.
\end{thm}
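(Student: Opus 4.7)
Part (1) is immediate from Remark \ref{rmk:notation_boundary}: the boundary inclusions \( \bd_U \colon \bd U \incl U \) for \( U \) an atom form a cellular model of \( \atom\Set \), hence generate the monomorphisms --- the cofibrations of the Cisinski model structure --- as a weakly saturated class.

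For part (2), the plan is to show that \( l(r(J)) \) coincides with the class \( \mathcal C \) of acyclic cofibrations of the Cisinski model structure from Theorem \ref{thm:cisinski_model_structure}, which immediately yields that \( J \) is a generating set of acyclic cofibrations. Write \( \mathcal A \) for the smallest class of anodyne extensions with respect to the exact cylinder \( \thearrow{} \gray - \); I would then establish the chain \( \mathcal A \subseteq l(r(J)) \subseteq \mathcal C \subseteq \mathcal A \), forcing all three classes to coincide. The inclusion \( \mathcal A \subseteq l(r(J)) \) follows from Proposition \ref{prop:lr_J_is_anodyne}, which shows that \( l(r(J)) \) is itself a class of anodyne extensions. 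The inclusion \( l(r(J)) \subseteq \mathcal C \) follows from Lemma \ref{lem:horn_contractible}, which says every element of \( J \) is a monomorphic \( \infty \)\nbd equivalence, combined with the closure of \( \mathcal C \) under the saturation operations defining \( l(r(J)) \). Finally, the inclusion \( \mathcal C \subseteq \mathcal A \) is the content of Cisinski's test category theorem \cite[\S 4.2]{cisinski_prefaisceaux_2006}, applied via Proposition \ref{prop:atom_is_test}.

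The main obstacle is this last inclusion \( \mathcal C \subseteq \mathcal A \), which identifies the acyclic cofibrations of the Cisinski model structure with the minimal class of anodyne extensions; this is the essential input from the test category framework of \cite{cisinski_prefaisceaux_2006}, and the preceding propositions --- in particular, the exact cylinder structure on \( \atom\Set \) and the anodyne character of \( l(r(J)) \) --- have been arranged so that this theorem may be invoked as a black box.
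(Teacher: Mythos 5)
Part (1) and the first two inclusions of your chain are fine: \( \mathcal A \subseteq l(r(J)) \) because \( l(r(J)) \) is itself a class of anodyne extensions (Proposition \ref{prop:lr_J_is_anodyne}), and \( l(r(J)) \subseteq \mathcal C \) because \( J \) consists of monomorphisms that are \( \infty \)\nbd equivalences (Lemma \ref{lem:horn_contractible}) and \( \mathcal C \) is weakly saturated. The gap is the last inclusion \( \mathcal C \subseteq \mathcal A \), which you treat as a black box labelled ``Cisinski's test category theorem''. That theorem (via Proposition \ref{prop:atom_is_test}) gives the \emph{existence} of the model structure with the \( \infty \)\nbd equivalences as weak equivalences; it does \emph{not} say that the acyclic cofibrations of this structure are contained in the minimal class of anodyne extensions attached to an arbitrary exact cylinder. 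In general, for a homotopical datum \( (\fun{I}, \An) \), anodyne extensions are acyclic cofibrations of the associated model structure but the converse can fail (inner anodyne maps versus acyclic cofibrations of the Joyal model structure is the standard cautionary example), so ``acyclic cofibration \( \Rightarrow \) anodyne'' is precisely the hard direction and carries all the combinatorial content.

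The statement you need is \cite[Corollaire 8.2.19]{cisinski_prefaisceaux_2006}, which identifies \( \Lambda_{\mathfrak J}(J, \mathscr M) = l(r(J)) \) with the acyclic cofibrations, but only under its hypothesis \emph{(b)}: every inclusion of atoms \( V \incl U \) must already be an anodyne extension. This is Corollary \ref{cor:any_mono_atom_anodyne}, which rests on the induction of Proposition \ref{prop:submolecule_inclusions_anodyne} showing that the boundary inclusions \( \bd^\a_k U \incl U \) are generated by horns --- the base case being that \( \bd^\a U \incl U \) \emph{is} the horn \( \lambda^{\bd^{-\a}U}_U \). Your proof never invokes this, and it cannot be recovered formally: if \( \mathcal A \) is read as the minimal class \( \Lambda_{\mathfrak J}(\varnothing, \mathscr M) \) generated by the cylinder alone, hypothesis \emph{(b)} is not known for it (and there is no reason to expect boundary inclusions of atoms to lie in it), so \( \mathcal C \subseteq \mathcal A \) is unsubstantiated and your sandwich does not close. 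The repair is to take \( \mathcal A = \Lambda_{\mathfrak J}(J,\mathscr M) = l(r(J)) \) and to supply Corollary \ref{cor:any_mono_atom_anodyne} (together with the representability of \( \thearrow{} \gray U \), for hypothesis \emph{(c)}) before invoking Corollaire 8.2.19 --- which is exactly the paper's argument.
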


\begin{proof}
    By Theorem \ref{thm:cisinski_model_structure}, the cofibrations are the monomorphisms, which are generated by the inclusions \( \bd U \incl U \), where \( U \) ranges over all atoms.

	Let \( J \) be the set of horn inclusions.
	By \cite[Proposition 1.3.13]{cisinski_prefaisceaux_2006} applied to \( \mathfrak{J} \eqdef (\thearrow{} \gray -, (\iota^-, \iota^+), \sigma) \) and \( \mathscr{M} \eqdef \set{ \bd U \incl U } \), there exists a smallest class \( \Lambda_{\mathfrak{J}}(J, \mathscr M) \) of anodyne extensions containing \( l(r(J)) \), which by Proposition \ref{prop:lr_J_is_anodyne} is actually equal to \( l(r(J)) \).
    	The statement then follows from \cite[Corollaire 8.2.19]{cisinski_prefaisceaux_2006},
    	whose assumption (\textit{a}) is satisfied by Lemma \ref{lem:horn_contractible}, assumption (\textit{b}) is proved in Lemma \ref{cor:any_mono_atom_anodyne}, and assumption (\textit{c}) is true because \( \thearrow{} \gray U \) is representable for all atoms \( U \). 
\end{proof}

\begin{thm}\label{thm:model_structure_monoidal_Gray}
	The Cisinski model structure on \( \atom\Set \) is monoidal with respect to the Gray product, whose derived functor is the cartesian product in the homotopy category.
\end{thm}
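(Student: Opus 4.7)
The plan is to verify the pushout-product and unit axioms for the Cisinski model structure with respect to the Gray product, and then to identify the derived Gray product with the cartesian product in the homotopy category.

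The unit axiom is immediate: the monoidal unit $\pt$ is representable, hence cofibrant, so any cofibrant replacement of $\pt$ may be taken to be the identity. For the pushout-product axiom, Theorem \ref{thm:main_theorem} reduces the verification to generators. The pushout-product of two generating cofibrations $\bd_U \hat\gray \bd_V$ is a monomorphism by Lemma \ref{lem:day_gray_product_preserves_mono}, together with the general fact that the pushout-product of two monomorphisms is a monomorphism in any presheaf topos. For a generating cofibration against a horn, Lemma \ref{lem:gray_prodcut_is_model_monoidal} identifies both $\bd_U \hat\gray \lambda^W_V$ and $\lambda^W_V \hat\gray \bd_U$ with horn inclusions, hence anodyne extensions. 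Two applications of Lemma \ref{lem:pushout_product_cellular_extension} --- one in each variable of the pushout-product --- then propagate these statements from generators to all (acyclic) cofibrations.

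For the identification of the derived functor, the natural projections $X \gray Y \to X$ and $X \gray Y \to Y$ induced by $\sigma$ assemble into a natural comparison $\pi_{X,Y}\colon X \gray Y \to X \times Y$. The monoidality just established implies that $- \gray -$ preserves weak equivalences in each variable between cofibrant objects; the corresponding fact for $- \times -$ will follow from the strict test category property of $\atom$ (Proposition \ref{prop:map_gray_to_cart_is_weak_eq}). A cell induction along the generating cofibrations $\bd_U \incl U$ then reduces the claim that $\pi_{X,Y}$ is an $\infty$-equivalence to the case of representables, where it is precisely Proposition \ref{prop:map_gray_to_cart_is_weak_eq}. Since every diagrammatic set is cofibrant, this identifies the derived Gray product with the derived cartesian product, which by the strict test category property is the categorical product in the homotopy category.

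The main obstacle is the propagation step in the pushout-product axiom: Lemma \ref{lem:pushout_product_cellular_extension} fixes one variable at a time, so one first applies it to extend from generating horns to arbitrary acyclic cofibrations while keeping a generating cofibration fixed, and then uses its symmetric variant --- available because the Day convolution Gray product is biclosed --- to extend the cofibration variable. The second paragraph relies on Proposition \ref{prop:map_gray_to_cart_is_weak_eq}, whose independent proof presumably goes via an explicit combinatorial comparison between the Gray product and the cartesian product of atoms.
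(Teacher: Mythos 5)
Your verification of the pushout-product and unit axioms follows the paper's route: reduce to generators, use the identity $\lambda^W_V \hat\gray \bd_U = \lambda^{W\gray U}_{V \gray U}$ of Lemma \ref{lem:gray_prodcut_is_model_monoidal} for the acyclic part, and propagate via Lemma \ref{lem:pushout_product_cellular_extension} in each variable. One imprecision: for a Day convolution tensor it is \emph{not} a general fact of presheaf topoi that the pushout-product of two monomorphisms is a monomorphism. Lemma \ref{lem:day_gray_product_preserves_mono} tells you that the four maps in the square are monomorphisms, but for $\bd_U \hat\gray \bd_V$ to be a monomorphism you additionally need the square to be a pullback, i.e.\ $(\bd U \gray V) \cap (U \gray \bd V) = \bd U \gray \bd V$ inside $U \gray V$. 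The paper settles exactly this point with Lemma \ref{lem:pushout_prodcut_inclusions_rdcpx}: the generating cofibrations are inclusions of regular directed complexes, where the Gray product is computed concretely and the underlying function of the pushout-product is a cartesian pushout-product of injections, hence injective. You should invoke that lemma rather than a general topos-theoretic principle.

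For the identification of the derived functor you take a genuinely different route. The paper never constructs a comparison map in $\atom\Set$: it observes that $\Sd_\smallatom(U \gray V) = N(U \times V) \cong NU \times NV$ on representables (the Gray product leaves the underlying poset unchanged), extends this to a natural isomorphism $\Sd_\smallatom(X \gray Y) \cong \Sd_\smallatom X \times \Sd_\smallatom Y$ for all diagrammatic sets using cocontinuity of $\Sd_\smallatom$ and biclosedness of both tensors, and then concludes from the Quillen equivalence of Proposition \ref{prop:quillen_adjunction_sd_ex} together with the classical fact that the cartesian product of simplicial sets derives to the product in the homotopy category. This avoids both your cell induction and any appeal to Proposition \ref{prop:map_gray_to_cart_is_weak_eq}, which the paper proves only \emph{afterwards}. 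Your route --- a natural map $\pi_{X,Y}\colon X \gray Y \to X \times Y$, total asphericity of $\atom$ (the content of the strict test category property) to handle the representable case, and a gluing argument over the cellular filtration --- does go through, since all objects are cofibrant, the model structure is left proper, and Proposition \ref{prop:map_gray_to_cart_is_weak_eq} depends only on Theorem \ref{thm:main_theorem}, so no circularity arises; but it is heavier, and it front-loads the strict test category property, which the paper prefers to derive separately as a closing result. What the paper's approach buys is that the comparison becomes a strict isomorphism after applying $\Sd_\smallatom$, so no homotopical bookkeeping is needed; what yours buys is an explicit weak equivalence $X \gray Y \to X \times Y$ internal to $\atom\Set$ (which the paper only records later, as Corollary \ref{cor:gray_and_times_weakly_equivalent}).
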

\begin{proof}
	The pushout-product axiom is given by Lemma \ref{lem:pushout_prodcut_inclusions_rdcpx} for the generating cofibrations and Lemma \ref{lem:gray_prodcut_is_model_monoidal} for the generating acyclic cofibrations.
	The unit axiom is satisfied since all objects are cofibrant.
	This proves that the model structure is monoidal.

	If \( U, V \) are atoms, since the underlying poset of \( U \gray V \) is the cartesian product, and the nerve functor \( N\colon \poscat \to \sset \) is right adjoint, we have a natural isomorphism
    \begin{equation*}
	    \Sd_\smallatom(U \gray V) = N(U \times V) \cong NU \times NV = \Sd_\smallatom(U) \times \Sd_\smallatom(V).
    \end{equation*}
    Since \( \Sd_\smallatom \) is left adjoint, and \( - \gray - \) and \( - \times - \) are closed respectively in \( \atom\Set \) and \( \sset \), the above isomorphism extends along colimits to all diagrammatic sets, that is, \( \Sd_\smallatom \) sends Gray products to cartesian products in \( \sset \), whose derived functor is the cartesian product.
    We conclude by Proposition \ref{prop:quillen_adjunction_sd_ex}.
\end{proof}

\noindent Although it is clear that the good notion of product of diagrammatic sets is the Gray product, nevertheless we conclude this paper by showing that \( \atom \) is a \emph{strict} test category. 
We shortcut the study of the impractical cartesian product of diagrammatic sets by factorising the Quillen equivalence
\[        \begin{tikzcd}
	\sset && \sset 
            \arrow[""{name=0, anchor=center, inner sep=0}, "{\Sd}", curve={height=-12pt}, from=1-1, to=1-3]
            \arrow[""{name=1, anchor=center, inner sep=0}, "{\Ex}", curve={height=-12pt}, from=1-3, to=1-1]
            \arrow["\dashv"{anchor=center, rotate=-90}, draw=none, from=0, to=1]
        \end{tikzcd}
\]
through \( \atom\Set \), using the above Quillen equivalence \( \Sd_\smallatom \dashv \Ex_\smallatom \) and another Quillen adjunction we define below. 
With the right adjoint of the latter, we can send products of diagrammatic sets to products of simplicial sets, whose homotopical properties are already well-understood.

\begin{comm}
	The fact that \( \atom \) is a strict test category may, at first, come as a surprise: taking \( \square \) to be the full subcategory of \( \atom \) on iterated Gray products of \( \thearrow{} \), we obtain a cubical category \emph{without} connections (the coconnections maps are not cartesian, see Remark \ref{rmk:cartesian_maps}).
This is famously a test category but not a strict test category \cite{Maltsiniotis2009}: for instance, the product of two intervals in the cubical category has the homotopy type of \( S^2 \vee  S^1 \).
However, the simplex category \( \Delta \) is a strict test category, and since we also recover it as a full subcategory of \( \atom \), we should rather expect that we have enough maps to make the cartesian product of diagrammatic sets homotopically well behaved.
\end{comm}

\noindent The full subcategory inclusion \( i\colon \simplexcat \incl \atom \) induces by left Kan extension along the Yoneda embedding a functor \( i_\Delta \colon \sset \to \atom\Set \), together with its right adjoint \( (-)_\Delta \colon \atom\Set \to \sset \). 
Since \( i \) is full and faithful, so is \( i_\Delta \). 
We therefore have the following adjunctions:
    \[
        \begin{tikzcd}
		\sset && \atom\Set && \sset. 
            \arrow[""{name=0, anchor=center, inner sep=0}, "{i_\Delta}", curve={height=-12pt}, from=1-1, to=1-3]
            \arrow[""{name=1, anchor=center, inner sep=0}, "{(-)_\Delta}", curve={height=-12pt}, from=1-3, to=1-1]
            \arrow[""{name=2, anchor=center, inner sep=0}, "{\Sd_\smallatom}", curve={height=-12pt}, from=1-3, to=1-5]
            \arrow[""{name=3, anchor=center, inner sep=0}, "{\Ex_\smallatom}", curve={height=-12pt}, from=1-5, to=1-3]
            \arrow["\dashv"{anchor=center, rotate=-90}, draw=none, from=0, to=1]
            \arrow["\dashv"{anchor=center, rotate=-90}, draw=none, from=2, to=3]
        \end{tikzcd}
\]
    We claim that \( \Sd_\smallatom i_\Delta \) is naturally isomorphic to the barycentric subdivision endofunctor \( \Sd \), hence \( (\Ex_\smallatom -)_\Delta \) is naturally isomorphic to its right adjoint \( \Ex \) \cite[Section 4.6]{fritsch1990cellular}. 
    The restriction of the forgetful functor \( \atom \to \poscat \) to \( \Delta \) is precisely the functor sending the \( n \)\nbd simplex to its poset of non-degenerate simplices ordered by inclusion.
    By definition, its post-composition with \( N \) is the barycentric subdivision \( \Sd \colon \Delta \to \sset \). 
    Then the diagram of functors
    \[
        \begin{tikzcd}
            \sset & \atom\Set \\
            \Delta & \atom & \poscat & \sset
            \arrow["{i_\Delta}", from=1-1, to=1-2]
            \arrow["{\Sd_\smallatom}", curve={height=-12pt}, from=1-2, to=2-4]
            \arrow[hook, from=2-1, to=1-1]
            \arrow[hook, from=2-1, to=2-2]
            \arrow["\Sd"', curve={height=18pt}, from=2-1, to=2-4]
            \arrow[hook, from=2-2, to=1-2]
	    \arrow["\fun{U}", from=2-2, to=2-3]
            \arrow["N", from=2-3, to=2-4]
        \end{tikzcd}
\]
    commutes up to natural isomorphism. 
    Because \( i_\Delta \) is the left Kan extension of \( \Delta \incl \atom\Set \) along \( \Delta \incl \sset \), and \( \Sd_\smallatom \), as a left adjoint, preserves left Kan extensions, \( \Sd_\smallatom i_\Delta \) is the left Kan extension of \( \Sd \colon \Delta \to \sset \) along the Yoneda embedding, which is by definition \( \Sd \) up to natural isomorphism. 

\begin{prop}\label{prop:other_adjunction_is_quillen}
    The adjunction \( i_\Delta \dashv (-)_\Delta \) is a Quillen equivalence between the classical model structure on \(\sset \) and the Cisinski model structure on \( \atom\Set \).
\end{prop}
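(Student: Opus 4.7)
The plan is to proceed in two steps: first establish that $i_\Delta \dashv (-)_\Delta$ is a Quillen adjunction, then upgrade it to a Quillen equivalence via the 2-out-of-3 property applied to its composition with $\Sd_\smallatom \dashv \Ex_\smallatom$.

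To show the Quillen adjunction, since both model structures are cofibrantly generated and $i_\Delta$ is a left adjoint, it suffices to check that $i_\Delta$ sends the classical generating cofibrations $\bd\Delta[n] \incl \Delta[n]$ and generating acyclic cofibrations $\Lambda^k[n] \incl \Delta[n]$ to cofibrations and acyclic cofibrations in $\atom\Set$, respectively. Since $i_\Delta$ agrees with the composite inclusion $\simplexcat \incl \atom \incl \atom\Set$ on representables and preserves all colimits, writing $\bd\Delta[n]$ and $\Lambda^k[n]$ as canonical colimits of their non-degenerate face simplices and combining Proposition \ref{prop:colim_inclusions_preserved_by_i^*} with Corollary \ref{cor:rdcpx_colim_its_atom}, I can identify $i_\Delta(\bd\Delta[n] \incl \Delta[n])$ with $\bd\Delta^n \incl \Delta^n$, and $i_\Delta(\Lambda^k[n] \incl \Delta[n])$ with a horn inclusion $\Lambda^{V_k}_{\Delta^n} \incl \Delta^n$, where $V_k$ is the atom corresponding to the $k$-th $(n-1)$-face of $\Delta^n$. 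The former is a cofibration by Theorem \ref{thm:main_theorem}; the latter is an acyclic cofibration by the same theorem, provided that $V_k$ is a rewritable submolecule of one of $\bd^-\Delta^n$, $\bd^+\Delta^n$, which follows from the combinatorics of the oriented simplex recalled in \cite[Chapter 9]{amar_pasting}.

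For the upgrade to a Quillen equivalence, by the discussion preceding the statement there is a natural isomorphism $\Sd_\smallatom \circ i_\Delta \cong \Sd$, and by uniqueness of right adjoints also $(\Ex_\smallatom (-))_\Delta \cong \Ex$; thus the composite Quillen adjunction is isomorphic to Kan's classical Quillen self-equivalence $\Sd \dashv \Ex$ on $\sset$. Since $\Sd_\smallatom \dashv \Ex_\smallatom$ is a Quillen equivalence by Proposition \ref{prop:quillen_adjunction_sd_ex}, the 2-out-of-3 property for Quillen equivalences forces $i_\Delta \dashv (-)_\Delta$ to be a Quillen equivalence as well. The main obstacle I anticipate is the identification of $i_\Delta(\Lambda^k[n])$ with $\Lambda^{V_k}_{\Delta^n}$ as a subpresheaf of $\Delta^n$: this requires unpacking both descriptions as colimits over the poset of proper faces of $[n]$ minus the $k$-th, and then verifying that the face $V_k$ of the oriented $n$-simplex genuinely appears as a rewritable submolecule of the appropriate oriented boundary. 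Once this is handled, the remainder of the argument is essentially formal given the results already established in the paper.
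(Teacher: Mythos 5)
Your proposal is correct and follows essentially the same route as the paper: check that $i_\Delta$ sends boundary inclusions to boundary inclusions and simplicial horns to diagrammatic horns (the paper asserts this without spelling out that each codimension-one face of the oriented simplex is a rewritable submolecule of $\bd^-\Delta^n$ or $\bd^+\Delta^n$, which is the detail you rightly flag), then conclude by 2-out-of-3 using $\Sd_\smallatom i_\Delta \cong \Sd$ and Proposition \ref{prop:quillen_adjunction_sd_ex}. Your write-up is simply a more detailed version of the paper's two-sentence argument.
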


\begin{proof}
    The boundary inclusions \( \bd \Delta^n \incl \Delta^n \) and the horns \( \Lambda^k_n \incl \Delta^n \) in \( \sset \) are sent via \( i_\Delta \) to boundary inclusions and horns in \( \atom\Set \), therefore the adjunction is Quillen.
    Since \( \Sd \dashv \Ex \) is a Quillen auto-equivalence by \cite[Proposition 8.2.29]{cisinski_prefaisceaux_2006} and \( \Sd_\smallatom \dashv \Ex_\smallatom \) is a Quillen equivalence by Proposition 
    \ref{prop:quillen_adjunction_sd_ex}, by the 2-out-of-3 property \( i_\Delta \dashv (-)_\Delta \) is also a Quillen equivalence.
\end{proof}

\begin{prop}\label{prop:map_gray_to_cart_is_weak_eq}
    The category \( \atom \) is a strict test category. 
\end{prop}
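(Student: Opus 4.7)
The plan is to show that the canonical comparison map $X \gray Y \to X \times Y$ is an $\infty$\nbd equivalence for all diagrammatic sets $X, Y$. Combined with Theorem~\ref{thm:model_structure_monoidal_Gray}, which identifies the Gray product with the categorical product in the homotopy category, this yields that the cartesian product also represents the categorical product, which is the defining condition of a strict test category.

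To analyse the comparison, I would transport it to $\sset$ via the right Quillen equivalence $(-)_\Delta$ of Proposition~\ref{prop:other_adjunction_is_quillen}. Being the restriction along $\Delta \incl \atom$, this functor is simultaneously a right adjoint (to $i_\Delta$) and a left adjoint (to the right Kan extension along the Yoneda embedding), so it preserves both limits and colimits; in particular it preserves products, so the comparison becomes $(X \gray Y)_\Delta \to X_\Delta \times Y_\Delta$ in $\sset$. Since $- \gray -$ and $- \times -$ both preserve colimits in each variable, a cellular induction over the generating cofibrations $\bd U \incl U$ then reduces the problem to showing that $(U \gray V)_\Delta \to U_\Delta \times V_\Delta$ is a weak equivalence in $\sset$ for all atoms $U, V$.

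For the atom case, both simplicial sets are expected to have the homotopy type of the product of nerves $NU \times NV$: this uses the identity $\Sd_\smallatom(U \gray V) \cong NU \times NV$ established in the proof of Theorem~\ref{thm:model_structure_monoidal_Gray}, together with the factorisation $\Sd \cong \Sd_\smallatom \circ i_\Delta$ of the classical subdivision through $\atom\Set$. The weak equivalence in $\atom\Set$ then follows by reflecting through the Quillen equivalence $(-)_\Delta$, using fibrant replacements (products of Kan\nbd type objects remain Kan\nbd type in both model categories) to accommodate general objects. The main obstacle I foresee is the atom\nbd level verification itself: the two simplicial sets are not generally isomorphic, since a pair of cartesian maps $\Delta^n \to U, \Delta^n \to V$ does not always assemble into a single cartesian map $\Delta^n \to U \gray V$ of matching dimension (for instance, non\nbd degenerate $2$\nbd simplices in $(\thearrow{} \gray \thearrow{})_\Delta$ correspond only to degenerate pairs on the right\nbd hand side), so a genuine homotopical comparison is required rather than a strict identification.
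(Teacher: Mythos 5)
Your overall strategy --- prove that the canonical map \( X \gray Y \to X \times Y \) is a weak equivalence and combine this with Theorem~\ref{thm:model_structure_monoidal_Gray} --- is sound in principle; it is exactly how the paper derives Corollary~\ref{cor:gray_and_times_weakly_equivalent} \emph{from} the proposition, run in the reverse direction. The reduction to atoms by cellular induction is workable, since both \( (- \gray -)_\Delta \) and \( (-)_\Delta \times (-)_\Delta \) preserve colimits and monomorphisms in each variable. The atom case is also easier than you fear: \( U \gray V \) is again an atom, and \( W_\Delta \simeq \Sd(W_\Delta) \simeq \Sd_\smallatom W = NW \simeq \ast \) for any atom \( W \) (the nerve of a poset with a greatest element), so both \( (U \gray V)_\Delta \) and \( U_\Delta \times V_\Delta \) are weakly contractible and \emph{any} map between them is a weak equivalence by 2-out-of-3; no comparison of the two identifications with \( NU \times NV \) is needed.

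The genuine gap is the final step, ``reflecting through the Quillen equivalence \( (-)_\Delta \)''. A right Quillen equivalence reflects weak equivalences only between \emph{fibrant} objects, and neither \( X \gray Y \) nor \( X \times Y \) is fibrant in general. Your proposed fix by fibrant replacement does not close this: to transfer the question to fibrant replacements you would need \( (-)_\Delta \) to send the replacement maps to weak equivalences, which is precisely the behaviour of \( (-)_\Delta \) on non-fibrant objects that is in doubt; moreover, the fibrant replacement of \( X \gray Y \) is no longer a Gray product, so your cellular analysis no longer applies to it. The repair is to reflect through the \emph{left} Quillen equivalence instead: apply \( \Sd \) to your weak equivalence \( (X \gray Y)_\Delta \to (X \times Y)_\Delta \), use the natural weak equivalence \( \Sd((-)_\Delta) \simeq \Sd_\smallatom \) to deduce that \( \Sd_\smallatom(X \gray Y) \to \Sd_\smallatom(X \times Y) \) is a weak equivalence, and conclude because \( \Sd_\smallatom \) reflects weak equivalences between cofibrant objects \cite[Corollary 1.3.16]{hovey_model_2007} and every diagrammatic set is cofibrant. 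Note, however, that once you grant the two ingredients \( \Sd((-)_\Delta) \simeq \Sd_\smallatom \) and strict product-preservation of \( (-)_\Delta \), the paper's proof shows the cellular induction is unnecessary: the chain \( \Sd_\smallatom(X) \times \Sd_\smallatom(Y) \simeq \Sd(X_\Delta) \times \Sd(Y_\Delta) \simeq \Sd(X_\Delta \times Y_\Delta) = \Sd((X \times Y)_\Delta) \simeq \Sd_\smallatom(X \times Y) \), using only that \( \Delta \) is a strict test category, already shows that \( \Sd_\smallatom \) preserves finite products up to weak equivalence, which is the strictness of \( \atom \).
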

\begin{proof}
    By Proposition \ref{prop:other_adjunction_is_quillen}, for all diagrammatic sets, \( \Sd (X_\Delta) \) is weakly equivalent to \( \Sd_\smallatom X \) in \( \sset \). 
    Moreover, since \( \Delta \) is a strict test category, \( \Sd \) preserves cartesian products up to weak equivalence. If \( X, Y \) are diagrammatic sets, letting \( \simeq \) denote weak equivalence, we have
    \begin{align*}
        \Sd_\smallatom(X) \times \Sd_\smallatom(Y) &\simeq \Sd (X_\Delta) \times \Sd (Y_\Delta) \\
        &\simeq \Sd(X_\Delta \times Y_\Delta) \\
        &\cong \Sd((X \times Y)_\Delta) && ((-)_\Delta \text{ is right adjoint}) \\
        &\simeq \Sd_\smallatom (X \times Y).
    \end{align*}
    This proves that \( \Sd_\smallatom \) preserves finite products up to weak equivalence.
    It follows that \( \atom \) is a strict test category. 
\end{proof}

\begin{cor}\label{cor:gray_and_times_weakly_equivalent}
	Let \( X, Y \) be diagrammatic sets.
	Then \( X \gray Y \) and \( X \times Y \) are weakly equivalent in the Cisinski model structure.
\end{cor}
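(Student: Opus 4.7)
The plan is to reduce the statement to its analogue in simplicial sets via the left Quillen equivalence \( \Sd_\smallatom \colon \atom\Set \to \sSet \) of Proposition \ref{prop:quillen_adjunction_sd_ex}. Since every object of \( \atom\Set \) is cofibrant (cofibrations being exactly the monomorphisms, by Theorem \ref{thm:cisinski_model_structure}), this Quillen equivalence induces an equivalence of homotopy categories in which the derived left adjoint \emph{is} \( \Sd_\smallatom \) on the nose. In particular, two diagrammatic sets are weakly equivalent as soon as their images under \( \Sd_\smallatom \) are weakly equivalent in \( \sSet \).

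I would then assemble the conclusion from two results already proven. On one hand, Theorem \ref{thm:model_structure_monoidal_Gray} furnishes a natural isomorphism \( \Sd_\smallatom(X \gray Y) \cong \Sd_\smallatom X \times \Sd_\smallatom Y \) in \( \sSet \). On the other hand, the chain of weak equivalences exhibited inside the proof of Proposition \ref{prop:map_gray_to_cart_is_weak_eq} shows that \( \Sd_\smallatom X \times \Sd_\smallatom Y \simeq \Sd_\smallatom(X \times Y) \). Concatenating these yields a zig-zag of weak equivalences between \( \Sd_\smallatom(X \gray Y) \) and \( \Sd_\smallatom(X \times Y) \) in \( \sSet \), and the reflection property established in the first paragraph transports this zig-zag back to one between \( X \gray Y \) and \( X \times Y \) in \( \atom\Set \).

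There is essentially no obstacle, as both inputs are already available and their combination is formal; the corollary is really just the object-level statement of the strict test property already established in Proposition \ref{prop:map_gray_to_cart_is_weak_eq}. The only minor point deserving care is the passage from a zig-zag of weak equivalences in \( \sSet \) to one in \( \atom\Set \), but this is a standard consequence of \( \Sd_\smallatom \) being a left Quillen equivalence on a model structure in which every object is cofibrant.
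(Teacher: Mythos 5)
Your proposal is correct and follows essentially the same route as the paper: both arguments combine Theorem \ref{thm:model_structure_monoidal_Gray} (which gives \( \Sd_\smallatom(X \gray Y) \cong \Sd_\smallatom X \times \Sd_\smallatom Y \), i.e.\ that the Gray product induces the cartesian product in the homotopy category) with Proposition \ref{prop:map_gray_to_cart_is_weak_eq} (which gives \( \Sd_\smallatom X \times \Sd_\smallatom Y \simeq \Sd_\smallatom(X \times Y) \)), and then use that \( \Sd_\smallatom \) induces an equivalence of homotopy categories to conclude. The only cosmetic difference is that the paper phrases the conclusion as the canonical comparison map \( X \gray Y \to X \times Y \) being a weak equivalence, whereas you produce an unspecified zig-zag, which still suffices for the statement as written.
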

\begin{proof}
	By Theorem \ref{thm:model_structure_monoidal_Gray} and Proposition \ref{prop:map_gray_to_cart_is_weak_eq}, both the Gray product and the cartesian product induce the cartesian product in the homotopy category, which implies that the canonical map \( X \gray Y \to X \times Y \) is a weak equivalence.
\end{proof}

\bibliographystyle{alpha}
\small \bibliography{main.bib}

\end{document}